%% file: main.tex
\documentclass[11pt,letterpaper]{amsart}
\input{head}

\newcommand{\R}{\mathbf R}
\newcommand{\B}{\mathbf B}
\newcommand{\Sph}{\mathbf S}
\newcommand{\cA}{\mathcal A}
\newcommand{\BiRic}{\operatorname{BiRic}}

\newcommand{\p}{\partial}

\newcommand{\NN}{\mathbb{N}}

\newcommand{\sff}{\mathrm{I\!I}}
\newcommand{\dist}{\operatorname{dist}}

\DeclareMathOperator{\dive}{div}
\DeclareMathOperator{\capa}{Cap}

\makeatletter
\@namedef{subjclassname@2020}{\textup{2020} Mathematics Subject Classification}
\makeatother

\begin{document}

\title[Stable FBMH]{Stable free boundary minimal hypersurfaces in $\mathbb{B}^5$ and $\mathbb{B}^6$ }

\author{Han Hong}
\address{Department of Mathematics and statistics \\ Beijing Jiaotong University \\ Beijing \\ China, 100044}
\email{hanhong@bjtu.edu.cn}

\author{Yujie Wu}
\address{Institut f\"{u}r Mathematik\\Universit\"{a}t Potsdam\\Potsdam\\ Germany, 14476}
\email{yujiewu2025@gmail.com}

\begin{abstract}
In this paper, we prove that there is no complete two-sided stable immersed free boundary minimal
hypersurface in the closed Euclidean unit ball $\mathbb{B}^{n+1}$ for
$3\leq n\leq5$. 
\end{abstract}

\maketitle

\section{Introduction}
The rigidity and nonexistence of complete stable minimal hypersurfaces have been important topics in geometric analysis. A complete two-sided minimal immersion
\[
M^n\longrightarrow (X^{n+1},g)
\]
is stable if
\[
\int_M |\nabla\phi|^2
\geq
\int_M\bigl(|A|^2+\operatorname{Ric}_X(\nu,\nu)\bigr)\phi^2
\]
for every compactly supported test function $\phi$, where $A$ is the second fundamental form of $M$ and $\nu$ is a globally defined unit normal. There are several well-known results. First, nonnegative Ricci curvature forces a closed stable minimal hypersurface to be totally geodesic with vanishing normal Ricci curvature, while positive Ricci curvature rules out such hypersurfaces. Second, nonnegative scalar curvature strongly restricts the topology and conformal type of stable minimal surfaces (sphere, torus, plane and cylinder); see \cite{Fischer-Colbrie-Schoen-The-structure-of-complete-stable}. A uniform positive scalar curvature lower bound also gives a diameter bound and hence compactness of the stable minimal surface; see \cite{Schoen-yau-79a,Schoen-Yau-PSC}. The main idea is that a stable minimal surface inherits positive Gaussian curvature in a spectral sense. Shen and Ye \cite{shenyingyerugang} extended this idea and obtained diameter estimates for stable minimal hypersurfaces when the ambient manifold has uniformly positive bi-Ricci curvature and $n\leq4$. More recently, Catino, Mari, Mastrolia, and Roncoroni \cite{catino-Mari-Mastrolia-Roncoroni} treated the case $n=5$ under uniformly positive bi-Ricci curvature and nonnegative $4$-intermediate Ricci curvature. Mazurowski, Wang, and Yao \cite{liam-wang-yao} obtained the same result in which the latter assumption is replaced by a Ricci curvature bound. These results imply that there is no complete noncompact two-sided minimal hypersurface in the unit sphere $\mathbb{S}^{n+1}$ for $n\leq 5.$

When $M$ is noncompact and $X$ is Euclidean space, the corresponding rigidity question is the stable Bernstein problem. It asks whether every complete two-sided stable minimal hypersurface in $\mathbb{R}^{n+1}$ for $n\leq 6$ must be a hyperplane. The classical case of surfaces in $\mathbb{R}^3$ was settled in the early 1980s \cite{Fischer-Colbrie-Schoen-The-structure-of-complete-stable,doCarmo-Peng,Pogorelov-stable}. Recent work has made major progress in higher dimensions; see \cite{chodoshliR4,catino-Mari-Mastrolia-Roncoroni,chodoshliR4anisotropic,Chodosh-Li-Minter-Stryker,mazet,cabre2026gradientestimatesgreenkernel}. These results have also led to rigidity and nonexistence theorems  for complete noncompact stable minimal and constant mean curvature hypersurfaces in curved ambient manifolds; see \cite{chodosh-li-stryker,Hong24,chen-hong-li-docarmoquestion,miranda2026finiteindexconstantmean}.

For manifolds with boundary, the natural objects are free boundary minimal hypersurfaces. Throughout this paper, completeness means completeness of the induced metric as a metric space, including the boundary. Let $(X^{n+1},\partial X)$ be a Riemannian manifold with boundary. A complete free boundary minimal immersion
\[
F:(M^n,\partial M)\longrightarrow (X^{n+1},\partial X)
\]
is minimal in the interior and meets $\partial X$ orthogonally along $\partial M$. Its stability inequality has an additional term involving the second fundamental form of $\partial X$:
\[
\int_M |\nabla\phi|^2
\geq
\int_M\bigl(|A|^2+\operatorname{Ric}_X(\nu,\nu)\bigr)\phi^2
+\int_{\partial M}h_{\partial X}(\nu,\nu)\phi^2.
\]
 
 Carlotto and Franz \cite{carlotto-franz} proved a diameter estimate for $M$ when $n=2$ under either of the following assumptions: $R_X\geq c>0$, $H_{\partial X}\geq0$; or $R_X\geq0$ and $H_{\partial X}\geq H_0>0$. In particular, every complete stable free boundary minimal surface satisfying these assumptions is compact. Their proof uses a conformal change under which $\partial M$ becomes convex, followed by the theory of complete surfaces with nonnegative curvature. This argument does not extend directly to higher dimensions. Notice that the second set of assumptions includes the unit ball $\mathbb{B}^3\subset\mathbb{R}^3$.

Inspired by \cite{chodosh-li-stryker}, the second author proved rigidity and nonexistence results in dimension $n=3$ under suitable assumptions on intermediate Ricci curvature, scalar curvature, and boundary convexity \cite{wuyujie,wuyujiennsc}. In particular, one has the following theorem.

\begin{theorem}[{\cite{wuyujiennsc}}]\label{wu's theorem}
There is no complete stable free boundary minimal immersion in the closed Euclidean unit ball $\mathbb{B}^4$.
\end{theorem}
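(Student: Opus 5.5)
The plan is to follow the warped $\mu$-bubble strategy of Chodosh--Li--Stryker, adapted to the free boundary setting. The aim is to show that a complete two-sided stable free boundary minimal immersion $M^3\to\B^4$ would have bounded diameter, hence be compact. We then rule out the compact case.

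\emph{Step 1: spectral curvature.} For the ball, $\operatorname{Ric}_X=0$ and $h_{\partial X}(\nu,\nu)=1$. The stability inequality therefore reads
\[
\int_M|\nabla\phi|^2\ge\int_M|A|^2\phi^2+\int_{\partial M}\phi^2 .
\]
Using the Gauss equation $R_M=-|A|^2$ in the flat ambient, we rewrite this as a spectral lower bound for the operator $-\gamma\Delta+\tfrac12 R_M$-type combinations. More precisely, with a positive first eigenfunction $u$ satisfying a Robin condition $\partial_\eta u= u$ on $\partial M$, we obtain
\[
-\Delta u\ge |A|^2u \quad\text{in } M .
\]
The boundary $\partial M$ has mean curvature in $M$ equal to the mean curvature of $\partial M\subset \mathbf S^3$ plus a contribution from the unit sphere. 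Orthogonality gives that $\partial M\subset\Sph^3$ has second fundamental form in $M$ equal to the identity, so $H_{\partial M}=2$, which is uniformly positive.

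\emph{Step 2: warped $\mu$-bubbles.} On $M$ we minimize
\[
\int_{\partial^*\Omega}u^\gamma-\int_\Omega h\,u^\gamma
\]
among regions $\Omega$ containing a fixed boundary collar, with free boundary on $\partial M$. Here $h$ is built from a distance function $\rho$ so that $h\to\pm\infty$ on a band of width $L$. The key point is that the positive boundary convexity ($H_{\partial M}=2$) forces the $\mu$-bubble $\Sigma^2$ to meet $\partial M$ at a controlled angle, or orthogonally. The second variation with $\gamma$ chosen in the admissible range (about $\gamma\in(0,4/(n-1))$ with $n=3$) then gives a spectral inequality on $\Sigma$ of the form
\[
\int_\Sigma |\nabla\psi|^2-K_\Sigma\psi^2+c\psi^2+\int_{\partial\Sigma}k\psi^2\ge0 ,
\]
where $c>0$ comes from $h^2+$ the Robin term and $k>0$ is the geodesic curvature contribution from $\partial M$. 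Gauss--Bonnet, or the Carlotto--Franz diameter estimate for surfaces with positive spectral curvature and convex boundary, then bounds the diameter of each component of $\Sigma$. Choosing $L$ large relative to this bound shows that the portion of $M$ near a point has bounded diameter. Exhausting then yields a global diameter bound, so $M$ is compact.

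\emph{Step 3: the compact case.} Testing stability with $\phi\equiv1$ gives
\[
0\ge \int_M|A|^2+|\partial M|>0 ,
\]
since $\partial M\ne\emptyset$: a closed minimal hypersurface cannot lie in the ball. This is a contradiction.

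\emph{Main obstacle.} I expect Step 2 to be the hardest. It requires the existence and regularity of warped $\mu$-bubbles with free boundary in a noncompact manifold with boundary. It also requires handling the boundary term in the second variation: the Robin condition on $u$ and the mean curvature of $\partial M$ must combine with a favorable sign, so that the boundary integral $\int_{\partial\Sigma}$ is nonnegative for the chosen $\gamma$. I would first try to pick $\gamma$ so that the $u^{-1}\partial_\eta u$ terms cancel exactly against the boundary convexity, and only then tune $h$.
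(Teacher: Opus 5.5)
\textbf{There is a genuine gap.} You aim for a direct diameter bound on $M$, and that is where the plan breaks. The paper stresses that in the ball only an inradius bound is available (Appendix). That bound does not give compactness, because a boundary end can stay in the sphere and wind around indefinitely.

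\emph{From bubbles to compactness.} The step ``exhausting then yields a global diameter bound'' does not follow. Suppose every separating bubble in a band of width $L$ has uniformly bounded diameter. This only shows that $M$ is thin at every distance, not that it is short: $\Sph^2\times\R$ has positive scalar curvature and bounded $\mu$-bubble slices at every height, yet infinite diameter. What is missing is the volume-growth argument used in the cited proof (and in this paper for $n=4,5$):
\begin{enumerate}
\item Pass to the universal cover. Use the stability inequality for Neumann harmonic functions, together with infinite volume, to show that $M$ has a single end, and that it is a nonparabolic boundary end.
\item Place the bubbles in annular blocks of that end, so that they separate consecutive blocks.
\item Use the curvature estimates coming from the stable Bernstein theorem to turn bubble bounds into a uniform volume bound on each block. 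This gives almost linear volume growth.
\item Insert cutoffs $\varphi_i\to1$ with $\int_M|\nabla\varphi_i|^2\to0$ into stability. This contradicts the fact that $\int_{\partial M}\varphi_i^2$ stays bounded below by the area of a fixed piece of $\partial M$.
\end{enumerate}
Your Step 3 covers only the trivial compact case.

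\emph{The bubble inequality.} Step 2 does not close as described, because in $\R^4$ stability has no interior positive potential. You only get $-\Delta u\ge|A|^2u=-R_Mu$. The Robin condition $\partial_\eta u=u$ produces terms on $\partial\Sigma$ only, so it cannot supply the interior constant $c>0$. Without such a constant, the barrier condition reduces to something like $2|\nabla h|\le h^2$. No $h$ satisfying this can go from $+\infty$ to $-\infty$ across a band of finite width. Indeed, where $h>0$ one has $|(1/h)'|\le\tfrac12$ along unit-speed paths, so $h\ge 2/s>0$ at distance $s$ from $\partial_+$.

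Two smaller errors in this step:
\begin{enumerate}
\item Your displayed inequality has the signs reversed. With $c,k>0$ on that side it is weaker than the trivial inequality, so it gives no bound. The usable form puts $K_\Sigma$ on the left, next to the gradient term, and $c$ on the right.
\item Convexity of $\partial M$ does not force orthogonality; orthogonality comes from the free boundary variational problem. Convexity enters the second variation through the unfavorable term $-\int_{\partial\Sigma}u\phi^2\,\sff_{\partial M}(\nu,\nu)$.
\end{enumerate}

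A genuine source of positivity is needed. For $n=4,5$ the paper passes to $\tilde g=r^{-2}g$, which turns the punctured ball into the half-cylinder $\Sph^n\times[0,\infty)$. The spherical factor then supplies the interior constant in the spectral bi-Ricci bound, and $\partial N$ becomes totally geodesic. Wu's $\B^4$ argument instead uses warped capillary $\theta$-bubbles. There, boundary positivity is turned into bounds on the boundary length and inradius of a separating disk, and hence on its diameter.
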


It is useful to recall two possible approaches in proving nonexistence of noncompact stable minimal hypersurfaces. The first is to prove a diameter estimate directly as in \cite{Schoen-yau-79a,shenyingyerugang,catino-Mari-Mastrolia-Roncoroni} as mentioned before. The second is to prove geometric rigidity under nonnegative curvature assumptions and then use strict positivity to obtain nonexistence. In the second approach, one needs suitable volume growth in order to use cutoff functions in the stability inequality. In particular, at most quadratic volume growth is enough to get vanishing second fundamental form. This approach was developed in \cite{Fischer-Colbrie-Schoen-The-structure-of-complete-stable,chodosh-li-stryker,hong-yan}.

For free boundary hypersurfaces in a Euclidean ball, a direct diameter estimate is not available in higher dimensions because the interior ambient curvature vanishes. One can obtain only an inradius bound; see Appendix \ref{appendix}. Such a bound does not imply compactness because $\partial M$ may be noncompact. In \cite{wuyujiennsc}, the second author instead adapted the volume-growth approach of \cite{chodosh-li-stryker}. The nonparabolic end was divided into annular blocks of fixed width, and a stable warped $\theta$-bubble was constructed in each block. The capillary estimates give a separating disk with uniformly bounded boundary length and inradius, and hence uniformly bounded diameter. This gives a uniform diameter bound for each annular block. Curvature and local volume estimates then imply a uniform volume bound for the blocks and therefore almost linear volume growth of the hypersurface. This argument uses a diameter estimate for a two-dimensional warped $\theta$-bubble and does not directly apply when the bubbles have higher dimension.

In this paper, we use a different method to prove the following result.

\begin{theorem}\label{MainThem}
There is no complete stable immersed free boundary minimal hypersurface in $\mathbb{B}^{n+1}$ for $3\leq n\leq5$.
\end{theorem}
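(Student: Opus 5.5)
Our plan is to argue by contradiction: suppose $F:(M^n,\partial M)\to(\B^{n+1},\Sph^n)$ is a complete two-sided stable free boundary minimal immersion with $3\le n\le 5$. Then $\mathrm{Ric}_X=0$ and $h_{\partial X}(\nu,\nu)=1$ along $\partial M$. The stability inequality therefore reads
\[
\int_M|\nabla\phi|^2\ \ge\ \int_M|A|^2\phi^2+\int_{\partial M}\phi^2 .
\]
If $M$ is compact, taking $\phi\equiv 1$ gives $A\equiv 0$ and $\partial M=\emptyset$. This is impossible, since a complete totally geodesic hypersurface in the ball is a flat disk with nonempty boundary. So $M$ is noncompact, and since $\B^{n+1}$ is bounded, $M$ has infinite diameter while staying in a bounded region. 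The idea is to avoid any diameter estimate for bubbles. Instead we exploit the fact that the ambient coordinate functions give a lot of structure, and use a conformal/warped spectral argument adapted to the free boundary, in the spirit of the $\mu$-bubble method of Chodosh--Li(--Minter--Stryker) but run only at the level of a weighted stability inequality.

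\textbf{Step 1 (spectral Ricci/boundary positivity).} Using the Gauss equation and the Schoen--Yau/Shen--Ye trick, we would convert stability into a weighted inequality. Choose $\phi=u^{\beta}\psi$ with $u>0$ the first eigenfunction of the operator $-\Delta-|A|^2$ with Robin condition $\partial_\eta u=u$, on exhausting domains. Combined with the Kato-type inequality $|\nabla A|^2\ge (1+\frac{2}{n})|\nabla|A||^2$ and Simons' identity, this yields an inequality of the form
\[
\int_M|\nabla\psi|^2\ \ge\ c_n\int_M |A|^2\psi^2+c_n'\int_{\partial M}\psi^2 ,
\]
with positive constants $c_n,c_n'$ precisely when $n\le 5$. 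Here the boundary terms from integrating Simons' identity by parts must be handled using the free boundary condition. On the sphere, $\nabla_\eta A$ can be expressed through $A$ itself, since $\partial M$ is a free boundary hypersurface in $\Sph^n$ with umbilic ambient boundary. We expect the resulting boundary contributions to have a favorable sign, and we will check the dimension range where they do.

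\textbf{Step 2 (test functions from the ambient geometry).} Next we would plug in test functions built from the position vector. Use $\phi=\langle x,\nu\rangle$-type or $\phi=1-|x|^2$ weighted functions, or better, functions $f(r)$ with $r$ the intrinsic distance to a point, multiplied by $u^\beta$. The key identities are $\Delta_M |x|^2=2n$ and $\partial_\eta |x|^2=2$ on $\partial M$. They give, for any cutoff $\psi$,
\[
2n\int_M\psi^2=\int_{\partial M}2\psi^2-\int_M 2\psi\langle\nabla|x|^2,\nabla\psi\rangle .
\]
Hence $\int_M\psi^2\lesssim \int_{\partial M}\psi^2+\int_M|\nabla\psi|^2$. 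Combining this with Step 1, the boundary term is controlled by the Dirichlet energy. We therefore get a Poincar\'e-type inequality $\int_M\psi^2\le C\int_M|\nabla\psi|^2$ together with control on $|A|^2$, uniformly.

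\textbf{Step 3 (volume growth and contradiction).} The main obstacle is passing from these weighted inequalities to a contradiction without the bubble diameter bound. Our plan is the following. The Poincar\'e inequality from Step 2, together with the local curvature estimates implied by stability (Schoen--Simon--Yau type, valid for $n\le5$), should yield at most exponential volume growth with a controlled rate. Choosing instead $\psi=e^{-\alpha r}$ with $\alpha$ small in the weighted inequality of Step 1 would then force $\int_{\partial M}\psi^2$, and hence $\int_M\psi^2$, to be bounded by $\alpha^2\int_M\psi^2$. For $\alpha$ small this is a contradiction, unless the growth rate exceeds $2\alpha$. Making the exponents compatible is exactly the delicate point, and it is where the restriction $n\le5$ enters. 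If this direct route fails, the fallback is to use the spectral inequality to build warped $\mu$-bubbles only to bound the volume of annular blocks via a Bishop--Gromov-type comparison in the weighted sense, rather than via diameter.
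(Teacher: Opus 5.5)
\textbf{The gap is in Step 3.} Your Step 2 is correct. Step 1 is superfluous: the inequality you want there is just the stability inequality with $c_n=c_n'=1$, and Simons' identity is not needed and is not where $n\le 5$ enters. Together, stability and Step 2 show that the bottom of the spectrum of $M$ is positive. The problem in Step 3 is not a matter of tuning exponents. A positive bottom of spectrum is compatible with exponential volume growth, and your test-function computation is exactly the proof of Brooks' inequality. Insert $\psi=e^{-\alpha\rho}$ with $|\nabla\rho|\le 1$ into your Step 2 identity and into stability. This gives $(n-2\alpha)\int_M\psi^2\le\int_{\partial M}\psi^2\le\alpha^2\int_M\psi^2$. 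That is contradictory only for $\alpha<\sqrt{n+1}-1$, whereas $\psi\in L^2$ requires $2\alpha>\mu$, the exponential growth rate of $\Vol B_R$. So all the argument yields is the lower bound $\mu\ge 2(\sqrt{n+1}-1)$, not a contradiction. The curvature estimates give only $\mu\le(n-1)K$, where $K$ is a non-quantified constant from a blow-up argument via the stable Bernstein theorem. Nothing in Steps 1--2 excludes exponential growth. What is missing is an independent sub-exponential volume bound for the end. The paper proves almost linear growth, after which cutoffs with $\int_M|\nabla\varphi_i|^2\to 0$ contradict the boundary term $\int_{\partial M}\varphi_i^2$.

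\textbf{Your fallback lacks the idea that makes it work.} In $(M,g)$ the only interior potential is $|A|^2$, which has no positive lower bound. So $\mu$-bubbles built there carry no spectral Ricci positivity and no uniform volume bound; the paper remarks on exactly this. The paper's argument runs as follows.
\begin{enumerate}
\item It passes to the universal cover and shows $M$ has a single end, which is a nonparabolic boundary end. Hence connected bands exhaust the end and each bubble separates consecutive annuli.
\item It uses the Gulliver--Lawson metric $\tilde g=r^{-2}g$. The ambient becomes $\Sph^n\times[0,\infty)$ with totally geodesic boundary, and stability becomes $\int|\tilde\nabla\psi|^2\ge\int\bigl(r^2|A|^2-\tfrac{n(n-2)}{2}+\tfrac{n^2-4}{4}|dr|^2\bigr)\psi^2+\tfrac n2\int_{\partial N}\psi^2$.
\item The algebra of Chodosh--Li--Minter--Stryker ($n=4$) and Mazet ($n=5$) converts this into spectral (weighted) bi-Ricci positivity, $V\ge\delta-\tilde\Lambda_\alpha$.
\item Free boundary warped $\mu$-bubbles in each band then have spectrally positive Ricci curvature and totally geodesic boundary. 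A Neumann spectral Bishop--Gromov comparison therefore bounds their $\tilde g$-volume uniformly, and $r\le 1$ transfers this bound to $g$.
\item Jacobian comparison along minimizing geodesics from a boundary point, using $\Ric_M\ge -C$ from the curvature estimates, bounds the volume of each annular block by a fixed multiple of the bubble's volume.
\end{enumerate}
The restriction $3\le n\le 5$ enters through the bi-Ricci parameter choices, the dimensional condition in the spectral comparison, and the curvature estimates. It does not enter through exponent matching in your Step 3.
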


For a free boundary minimal hypersurface in $\mathbb{B}^{n+1}$, stability takes the form
\begin{equation}\label{eq:fb-stability}
\int_M |\nabla\varphi|^2\,d\mu
\geq
\int_M |A|^2\varphi^2\,d\mu
+\int_{\partial M}\varphi^2\,d\sigma
\end{equation}
for every compactly supported test function $\varphi$. If $M$ is compact, taking $\varphi\equiv1$ immediately gives a contradiction. Thus it remains to rule out complete noncompact, and necessarily nonproper, examples. The case $n=2$ follows from \cite{carlotto-franz} and $n=3$ follows from Theorem \ref{wu's theorem}, so the new part of the proof concerns $n=4$ and $n=5$.

We now describe the proof. After passing to the universal cover, we first study the ends of $M$. The harmonic-function argument from \cite{wuyujie,wuyujiennsc}, together with the infinite-volume property of a complete noncompact free boundary minimal hypersurface in a bounded Euclidean domain, shows that $M$ has only one end, see Theorem \ref{OneEnd}. This end meets $\partial M$ arbitrarily far out and is nonparabolic. This step gives the topological structure needed to construct connected bands that exhaust the end.

Next, set $r=|F|$ and consider the conformal metric
\[
\widetilde g=r^{-2}g
\]
on $N=M\setminus F^{-1}(0)$. The punctured Euclidean ball becomes the half cylinder $\mathbb{S}^n\times[0,\infty)$, and its boundary is totally geodesic. The original stability inequality gives a spectral bi-Ricci inequality on $(N,\widetilde g)$ with a Robin boundary term. For $n=4$ we use the usual bi-Ricci curvature in \cite{Chodosh-Li-Minter-Stryker}, while for $n=5$ we use a weighted bi-Ricci curvature and the choice of parameters from the stable Bernstein argument in \cite{mazet}. Most of the computations in this part have been conducted in \cite{Chodosh-Li-Minter-Stryker,mazet}.

On every sufficiently wide band in the unique end, we then construct a free boundary weighted $\mu$-bubble with respect to the $\tilde{g}$-metric. Combining its second variation with the spectral bi-Ricci inequality gives a positive spectral Ricci lower bound on the bubble, together with a useful boundary term. A boundary spectral comparison theorem gives a uniform volume bound for every separating bubble. Since $r\leq1$, the same uniform upper bound holds for the bubble in the original metric. We remark here that directly constructing free boundary weighted $\mu$-bubble in the original metric would not give a uniform bound on the volume of $\mu$-bubble.

The final step converts the bubble volume bound into a volume bound for the annular blocks of $M$. Curvature estimates for stable free boundary minimal hypersurfaces give a uniform lower bound for $\operatorname{Ric}_M$. The Jacobian comparison theorem along minimizing geodesics is used to bound the volume of the block by a fixed multiple of the volume of the bubble. Therefore the annular blocks have uniformly bounded volume, and the nonparabolic end of \(M\) has almost linear volume growth.

Finally, almost linear volume growth provides compactly supported cutoff functions $\varphi_k$ such that $\varphi_k\to1$ locally and
\[
\int_M|\nabla\varphi_k|^2\,d\mu\longrightarrow0.
\]
Substituting these functions into \eqref{eq:fb-stability} forces the interior and boundary terms on the right-hand side to vanish. This contradicts the positive boundary term of the Euclidean unit ball and completes the proof of Theorem \ref{MainThem}.

\vskip.2cm
The rest of the paper is organized as follows. In Section 2, we study the
ends of complete stable free boundary minimal hypersurfaces and prove the
existence and uniqueness of a nonparabolic boundary end. In Section 3, we recall the Gromov--Lawson conformal metric and derive the spectral weighted
bi-Ricci curvature inequalities. In Section 4, we construct free boundary
weighted $\mu$-bubbles and establish the required volume estimates. In Section 5, we prove almost linear volume growth and complete the proof of the main theorem. The appendix contains an inradius estimate for stable free boundary minimal
hypersurfaces.

\subsection{Ackownledgement}We thank Prof. Otis Chodosh for helpful discussions and encouragement. The first author is supported by the Fundamental Research Funds for the Central Universities No. YA26JBMC00040 and NSFC No. 12401058. 
The second author is funded by the European Union (ERC Starting Grant 101116001–COMSCAL)\footnote{Views and opinions expressed are however those of the author(s) only and do not necessarily reflect those of the European Union or the European Research Council. Neither the European Union nor the granting authority can be held responsible for them.}. 
  The second author also thanks the Academy of Mathematics and Systems Science, Chinese Academy of Sciences for their hospitality while part of this work was conducted.

\section{One Boundary Nonparabolic End}
We prove in this section that a complete two-sided stable free boundary  minimal hypersurface in a bounded convex domain in Euclidean space is compact if and only if its boundary is compact. If it is non-compact, it can only have one end, which is a boundary nonparabolic end.
For references on ends and capacity, see for example \cite{grigor1990dimension} and \cite{li2004lectures}.


Given a sequence of compact sets \(K_i\) exhausting \(M\), and an end of \(M\) is a sequence of connected unbounded components \(\{U_i\}_{i\in \NN}\) consisting of a nested sequence \(U_{i+1}\subset U_{i}\subset (M\setminus K_{i}) \). We say that \(\{U_i\}_{i\in \NN}\) and \(\{V_i\}_{i\in \NN}\) represent the same class of end if they are eventually contained in one another, that is, for every \(i\in \NN\) there is \(j,k\in \NN\) such that \(U_j \subset V_i\) and \(V_k \subset U_i\).
We first recall the notion of ends of a complete non-compact manifold $M$ with boundary. 

\begin{definition}[Boundary and interior ends]
An end $E=(E_j)_{j\in \NN}$ is a \emph{boundary end} if every $E_j$ satisfies
\[
        E_j\cap\partial M\neq\varnothing.
\]
It is an \emph{interior end} if there is some \(j\in \NN\) with
\[
        E_j\cap\partial M=\varnothing.
\]
Thus an interior end is eventually contained in $\Int M$, whereas a boundary
end continues to meet $\partial M$ arbitrarily far out. This definition does not depend on the choice of representative for a class of end.
\end{definition}
For the present free-boundary problem, a boundary end may remain in the compact sphere $\Sph^n$ and wind around there
indefinitely.

Let $E$ be a representative of an end with respect to a compact set $K$, we decompose its boundary as
\[
        \partial E=\partial_0E\cup\partial_1E,
        \qquad \partial_0E=\partial E\cap\partial M,
\]
where $\partial_1E$ is the inner boundary contained in $\partial K$.
After an arbitrarily small perturbation of \(K\), we may assume that \(\partial_0 E\) and \(\partial_1E\)
meet transversely with constant angle in $(0,\pi/4)$; by Theorem 4.5 in \cite{wuyujie}, solutions to the mixed boundary value problem (Neumann on \(\p_0 E\) and Dirichlet on \(\p_1 E\)) are at least \(C^{2,\alpha(\theta)}\) for some \(\alpha>0\) depending on the intersection angle \(\theta\in (0,\pi/4)\), this is the mixed
boundary regularity used below.  

\subsection{Capacity and nonparabolic ends}
\begin{definition}
    Let \(M\) be a complete non-compact Riemannian manifold with boundary (could be empty) and \(E=(E_j)_{j\in \NN}\) an end of \(M\), we define the capacity of \(E_j\) as,
    \begin{align*}
        \capa(E_j)=\inf_{\varphi \in C^1_c(E_j), 0\leq \varphi\leq 1,\varphi\rvert_{\partial_1 E_j}=1} \int_{E_j}|\nabla \varphi|^2.
    \end{align*}
    If for some \(j_0 \in \NN, \capa(E_{j_0})>0\), then \(\capa(E_j)>0\) for all \(j\in\NN\)  (see for example Lemmas 5.5 and 5.6 in \cite{wuyujiennsc}).
    We say an end \((E_j)_{j\in \NN}\) is nonparabolic if \(\capa(E_{j_0})>0\) for some \(j_0\in \NN\). Otherwise we say that the end \((E_j)_{j\in\NN}\) is parabolic.
\end{definition}

We recall the following lemma whose proof is given for completeness.
\begin{lemma}\label{capa_K}
    If \(E\) is a connected unbounded component of \(M\setminus K'\) for a compact subset \(K'\), for a compact subset \(K\) of \(E\) define, 
    \begin{align*}
        \capa_E(K):=\inf_{\varphi\rvert_K=1, \varphi\in C^{1}_c(E), 0\leq \varphi \leq 1} \int_{E}|\nabla \varphi|^2.
    \end{align*}
    We show the following,
    \begin{itemize}
        \item if \(\capa(E)=0\) then for any compact subset \(K\) of \(E\), \(\capa_E(K)=0\);
        \item if \(\capa_E(K)=0\) for some compact \(K\) with nonempty interior of \(E\), then \(\capa(E)=0\).
    \end{itemize}
\end{lemma}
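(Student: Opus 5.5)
Both bullets come down to cutting off or gluing test functions. The first bullet is a direct comparison of admissible classes. The second compares the inner boundary $\partial_1E$ with the compact set $K$ through a fixed compact "collar" region on which the relevant Dirichlet-type energy is controlled. Throughout, $\partial_1 E\subset\partial K'$ is compact, and test functions need not vanish on $\partial_0E$, which only carries the natural Neumann condition.

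\emph{First bullet.} Assume $\capa(E)=0$ and let $K\subset E$ be compact. Take $\varphi_i\in C^1_c(E)$ with $0\le\varphi_i\le1$, $\varphi_i|_{\partial_1E}=1$ and $\int_E|\nabla\varphi_i|^2\to0$.

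The first step is to show $\varphi_i\to1$ uniformly on $K$ in a weak sense. Since $E$ is connected, pick a compact connected domain $\Omega\subset E$ (with Lipschitz boundary, possibly meeting $\partial_0E$) containing $K$ and a neighbourhood of $\partial_1E$ in $E$. The functions $1-\varphi_i$ vanish on $\partial_1E$, which has positive measure in $\partial\Omega$. The Poincaré inequality for functions vanishing on part of the boundary then gives $\|1-\varphi_i\|_{W^{1,2}(\Omega)}\to0$.

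In general this does not give $\varphi_i\ge1$ on $K$ pointwise. So I would modify: set $\psi_i=\min\{1,\varphi_i/(1-\epsilon)\}$, or better, replace $\varphi_i$ on $\Omega$ by the harmonic-type competitor. The cleanest route is the following. Let $\eta$ be a fixed cutoff with $\eta=1$ on $K\cup\partial_1E$ and support in $\Omega$, and take $\psi_i=\max\{\varphi_i,\eta\}$. This is not small in energy, though, so instead use $\psi_i=\eta+(1-\eta)\varphi_i$. Then $\psi_i=1$ on $K$, $\psi_i\in C^1_c(E)$, $0\le\psi_i\le1$, and
\[
\nabla\psi_i=(1-\eta)\nabla\varphi_i+(1-\varphi_i)\nabla\eta,
\]
so
\[
\int|\nabla\psi_i|^2\le 2\int|\nabla\varphi_i|^2+2\sup|\nabla\eta|^2\int_\Omega(1-\varphi_i)^2\to0 .
\]
Hence $\capa_E(K)=0$.

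\emph{Second bullet.} This is symmetric. Assume $\capa_E(K)=0$ with $\operatorname{int}K\ne\varnothing$, and let $\varphi_i=1$ on $K$ with energy tending to $0$. Choose a compact connected $\Omega\subset E$ containing $K$ and a collar of $\partial_1E$. Now $1-\varphi_i$ vanishes on the open set $\operatorname{int}K\subset\Omega$, so the Poincaré inequality for functions vanishing on a set of positive measure again yields $\int_\Omega(1-\varphi_i)^2\to0$. The same gluing $\psi_i=\eta+(1-\eta)\varphi_i$, with $\eta=1$ near $\partial_1E$, gives admissible functions for $\capa(E)$ with energy tending to $0$.

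The main technical point is justifying these Poincaré inequalities on a region $\Omega$ that meets both $\partial_0E$ and $\partial_1E$. I would handle this by the standard compactness (Rellich) contradiction argument on a connected compact Lipschitz domain; the hypothesis $\operatorname{int}K\neq\varnothing$ is exactly what makes the vanishing set have positive measure.
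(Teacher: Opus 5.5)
Your proof is correct, but it takes a different route from the paper's.

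\textbf{The paper's route.} It works with capacitary potentials. It solves the mixed problem ($u_i=1$ on $\partial_1E$, $u_i=0$ on the outer cut $\Sigma_i$, Neumann on $\partial_0E$) on an exhaustion $\Omega_i$. The maximum principle gives monotonicity, and elliptic regularity for the mixed problem (at the corners $\partial_0E\cap\partial_1E$) together with lower semicontinuity of the energy gives $u_i\to1$ locally uniformly. It then truncates, $v_i=\min\{1,u_i/a_i\}$, and smooths. For the second bullet it invokes ``the same argument'' to get $h_j\to1$ locally uniformly and uses $\min\{1,2h_j\}$.

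\textbf{Your route.} You fix one compact connected Lipschitz domain $\Omega$ and apply a Poincar\'e inequality. In the first bullet the vanishing set is the trace on $\partial_1E$; in the second it is the positive-measure set $\operatorname{int}K$. This means you only need $\int_\Omega(1-\varphi_i)^2\to0$, not pointwise convergence. The convex gluing $\eta+(1-\eta)\varphi_i$ then produces admissible $C^1_c$ competitors directly, with no truncation or smoothing.

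\textbf{What each buys.} Your argument is more elementary and treats the two bullets symmetrically. It needs only Rellich compactness and the trace theorem on a Lipschitz domain, not regularity of the mixed problem at the corners. In the second bullet it also avoids a point the paper glosses over: an arbitrary sequence of test functions $h_j$ is not known to converge locally uniformly, so the paper's wording implicitly requires passing to harmonic replacements. The paper's route, in exchange, shows that the capacitary potentials themselves converge to $1$ locally uniformly, which is the natural object for the later harmonic-function arguments on ends.

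\textbf{Small points to tidy.}
\begin{itemize}
\item Choose $\Omega$ to contain a neighbourhood of $K$, not just $K$, so that a cutoff $\eta$ with $\eta=1$ on $K$ and $\operatorname{supp}\eta\subset\Omega$ exists.
\item In the first bullet your $\psi_i$ equal $1$ on $\partial_1E$. This is admissible for $\operatorname{Cap}_E(K)$ under the paper's convention, and indeed the paper's own $v_i$ do the same.
\end{itemize}
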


\begin{proof}
Choose a smooth exhaustion $\{\Omega_i\}_{i=1}^{\infty}$ of $E$ such
that
\[
\partial_1E\subset\partial\Omega_i,
\qquad
\Omega_i\subset\Omega_{i+1},
\qquad
\bigcup_{i=1}^{\infty}\Omega_i=E.
\]

Let $u_i$ be the minimizer of the Dirichlet energy on $\Omega_i$ subject to
\[
u_i=1 \text{ on }\partial_1E,
\quad
u_i=0 \text{ on }\Sigma_i=
\partial\Omega_i\setminus
\bigl(\partial_1E\cup\partial_0E\bigr), \quad \partial_\nu u_i=0. \text{ on } \p_0 E.
\]

Thus $u_i$ satisfies
\[
\begin{cases}
\Delta u_i=0 & \text{in }\Omega_i,\\
u_i=1 & \text{on }\partial_1E,\\
u_i=0 & \text{on }\Sigma_i,\\
\partial_\nu u_i=0 & \text{on }\partial_0E\cap\partial\Omega_i.
\end{cases}
\]
By the maximum principle,
\[
0\leq u_i\leq1.
\]
Since harmonic functions minimize Dirichlet energy while fixing the boundary value,
\[
\int_{\Omega_i}|\nabla u_i|^2
\rightarrow
\operatorname{Cap}(E).
\]
If \(\capa(E)=0\), comparing the boundary values of \(u_i\) and using the maximum principle , the sequence $\{u_i\}$ is increasing. We can first extract a subsequence converging weakly in \(W^{1,2}\), then use standard elliptic regularity.  We denote $u$ as its locally uniform limit in \(C^{2,\alpha}_{\text{loc}}\). The lower semicontinuity of the
Dirichlet energy gives
\[
\int_E|\nabla u|^2=0.
\]
Since $E$ is connected, $u$ is constant and \(u \rvert_{\p_1 E}=1\).
Consequently,
\[
u_i\longrightarrow1
\qquad\text{locally uniformly on }E.
\]

Now let $K\subset E$ be compact. For \(i\geq 2\) set
\[
a_i
=
\left(1-\frac1i\right)\min_Ku_i.
\]
Then
\[
0<a_i<\min_Ku_i,
\qquad
a_i\longrightarrow1.
\]
Define the Lipschitz function,
\[
v_i
=
\min\left\{1,\frac{u_i}{a_i}\right\}.
\]
Since $a_i<\min_Ku_i$, the continuity of $u_i$ implies that
$v_i\equiv1$ on a neighborhood of $K$. Extending $v_i$ by zero outside
$\Omega_i$.
Then
\[
\int_E|\nabla v_i|^2
\leq
\frac{1}{a_i^2}
\int_{\Omega_i}|\nabla u_i|^2
\longrightarrow0.
\]
Since $v_i\equiv1$ on a neighborhood of $K$, a standard smoothing
argument gives functions $\widetilde v_i\in C_c^1(E)$ such that
\[
0\leq\widetilde v_i\leq1,
\qquad
\widetilde v_i|_K=1,
\qquad
\int_E|\nabla\widetilde v_i|^2\longrightarrow0.
\]
Therefore,
\[
\operatorname{Cap}_E(K)=0.
\]

Conversely, suppose that
\[
\operatorname{Cap}_E(K)=0
\]
for some compact $K\subset E$ with nonempty interior, by definition there is \(h_j\) compactly supported, \(h_j \rvert_{K}=1\) and the Dirichlet energy of \(h_j\) converges to 0. Using the same argument as in the previous case, we have \(h_j\) converges to \(1\) locally uniformly on \(E\). Now take any compact subset \(K'\) of \(E\) and take \(j\geq j_0\) so that \(h_j \rvert_{K'}\geq \frac{1}{2}\), define \(v_j=\min\{1,2h_j\}\), then
\[
\capa_E(K')\leq \int_E |\nabla v_j|^2 \leq 4\int_{E}|\nabla h_j|^2\rightarrow 0.
\]
This implies \(\capa_E(K')=0\) for any compact \(K'\) and in particular \(\capa(E)=0\).
\end{proof}

Cao-Shen-Zhu \cite{Cao-Shen-Zhu-infinitevolume} proved that interior ends of minimal hypersurface are nonparabolic using the Michael-Simon-Sobolev inequality.
\begin{lemma}
    If \((M^n,\partial M)\rightarrow \mathbb{R}^{n+1}\), $n\geq 3$, is a complete non-compact minimal hypersurface, and \((E_j)_{j\in \NN}\) is an interior end, then \((E_j)_{j\in\NN}\) is nonparabolic.
\end{lemma}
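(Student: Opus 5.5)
The approach is to prove positivity of capacity from a Sobolev inequality on the end. The Michael--Simon inequality holds for minimal submanifolds of $\R^{n+1}$; on an interior end we may use test functions supported away from $\partial M$. Fix $j_0$ with $E:=E_{j_0}\cap\partial M=\varnothing$. By the Michael--Simon inequality (minimality makes the mean curvature term vanish), for every $f\in C^1_c(\Int M)$ we have
\[
\Bigl(\int_M |f|^{\frac{n}{n-1}}\Bigr)^{\frac{n-1}{n}}\leq C_n\int_M|\nabla f|.
\]
Since $n\geq3$, applying this to $|f|^{\frac{2(n-1)}{n-2}}$ and using H\"older gives the $L^2$ Sobolev inequality
\[
\Bigl(\int_M |f|^{\frac{2n}{n-2}}\Bigr)^{\frac{n-2}{n}}\leq C\int_M|\nabla f|^2 .
\]
This holds for every $f\in C^1_c(\Int M)$.

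Next, test this on the capacity competitors. Fix a compact set $K\subset E$ with nonempty interior, say a small geodesic ball $B$ in $\Int E$ with $\operatorname{Vol}(B)>0$. For any $\varphi\in C^1_c(E)$ with $0\le\varphi\le1$ and $\varphi|_K=1$, the support of $\varphi$ avoids $\partial M$, because $E\cap\partial M=\varnothing$. Extending $\varphi$ by zero, it is an admissible function on $\Int M$. Hence
\[
\operatorname{Vol}(B)^{\frac{n-2}{n}}\leq\Bigl(\int|\varphi|^{\frac{2n}{n-2}}\Bigr)^{\frac{n-2}{n}}\leq C\int_E|\nabla\varphi|^2 .
\]
This gives $\capa_E(K)\geq C^{-1}\operatorname{Vol}(B)^{(n-2)/n}>0$. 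By the second item of Lemma \ref{capa_K}, it follows that $\capa(E)>0$, so the end is nonparabolic.

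The step needing the most care is the interface with the definition of $\capa(E_j)$, whose competitors equal $1$ on $\partial_1E$ rather than on an interior compact set. I would handle this exactly through Lemma \ref{capa_K}, as above. Alternatively, one can argue directly: a competitor equal to $1$ on $\partial_1E$ can be modified near $\partial_1E$ so that it is $1$ on a fixed collar. We also need the Sobolev inequality for the immersion: Michael--Simon applies to immersed minimal submanifolds with $f$ compactly supported away from $\partial M$, so no boundary term arises. A final point is that $E$ must be taken to be a member of the end with $E\cap\partial M=\varnothing$, which the definition of interior end provides; passing to $E_{j_0}$ does not affect nonparabolicity, by the remark after the definition of capacity.
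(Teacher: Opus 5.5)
Your argument is the paper's: apply the Michael--Simon Sobolev inequality on a member $E_{j}$ of the end that avoids $\partial M$, test it on competitors equal to $1$ on a compact $K$ of positive volume to get $\capa_{E}(K)>0$, and conclude via Lemma~\ref{capa_K}; you also spell out the passage from the $L^1$ to the $L^2$ Sobolev inequality, which the paper takes for granted. One correction: the implication $\capa_E(K)>0\Rightarrow\capa(E)>0$ is the contrapositive of the \emph{first} item of Lemma~\ref{capa_K}, not the second, whose contrapositive gives the converse direction.
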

\begin{proof}
    We take \(j\) large enough so that \(E_j \cap \partial M =\emptyset\).
    The Sobolev inequality for minimal hypersurfaces in \(\mathbb{R}^{n+1}\) gives for any Lipschitz domain \(D \subset E_j\), and any Lipschitz function \(\phi\)  that vanishes on \(\partial D\),
    \begin{align*}
        \left( \int_{D}\phi^p\right)^{\frac{2}{p}} \leq c_n \int_D |\nabla \phi|^2,
    \end{align*}
    where $p=2n/(n-2)$ and \(c_n\) is the Sobolev constant that only depends on the dimension.
    
    We apply the above Michael-Simon-Sobolev inequality to a Lipschitz function  \(\phi\) whose support \(D\) is in \(E_j\). Let  \(\phi \rvert_{K}=1\) for some compact subset \(K\) in \(D\) with non-empty interior, we have
    \begin{align*}
        \int_{E_{j}} |\nabla \phi|^2=\int_{D} |\nabla \phi|^2\geq c_n^{-1}\left( \int_{K}\phi^p\right)^{\frac{2}{p}} >c_n^{-1}\Vol^{\frac{2}{p}}(K)>0.
    \end{align*} 
By Lemma \ref{capa_K}, the end \((E_j)_{j\in \NN}\) is nonparabolic.
\end{proof}

Using the stability inequality, we can prove that boundary ends of stable free boundary minimal hypersurfaces are also nonparabolic.

\begin{lemma}
    If \((M^n,\partial M) \rightarrow (X^{n+1},\partial X)\) is a two-sided stable free boundary minimal hypersurface, assume \(\Ric_X \geq 0, \sff_{\partial X}\geq 1\), and \((E_j)_{j\in \NN}\) is a boundary end, then \(\capa(E_j)>0\) for any \(j\in \NN\), i.e. \((E_j)_{j\in\NN}\) is nonparabolic.
\end{lemma}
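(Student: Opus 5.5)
I will argue by contradiction, using the stability inequality with a cutoff function that captures the boundary term. Suppose $\capa(E_{j})=0$ for some $j$. Since the remark after the definition of capacity shows that positivity of capacity at one level propagates to all levels, it suffices to derive a contradiction from $\capa(E_j)=0$ for a single $j$. By Lemma \ref{capa_K}, $\capa(E_j)=0$ implies $\capa_{E_j}(K)=0$ for every compact $K\subset E_j$.

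Because $E$ is a boundary end, $E_j\cap\partial M\neq\varnothing$. I will choose a compact set $K\subset E_j$ with nonempty interior such that $K\cap\partial M$ contains a relatively open piece $\Gamma$ of $\partial_0E_j$ with $|\Gamma|>0$. Such a set exists because $\partial_0E_j$ is a nonempty hypersurface portion of $\partial M$, and $E_j$ is open near it away from $\partial_1E_j$. Vanishing capacity then produces $\varphi_k\in C^1_c(E_j)$ with
\[
0\leq\varphi_k\leq1,\qquad \varphi_k|_K=1,\qquad \int_{E_j}|\nabla\varphi_k|^2\to0.
\]
Here compact support means support in the topology of $E_j$: the support may meet $\partial_0E_j$ but stays away from $\partial_1E_j$ and from infinity. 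Extending $\varphi_k$ by zero to all of $M$ gives admissible test functions for the free boundary stability inequality, since stability allows test functions that are nonzero on $\partial M$.

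Insert $\varphi_k$ into the stability inequality stated in the introduction. With $\Ric_X\geq0$ and $\sff_{\partial X}\geq1$, we get
\[
\int_M|\nabla\varphi_k|^2\;\geq\;\int_M\bigl(|A|^2+\Ric_X(\nu,\nu)\bigr)\varphi_k^2+\int_{\partial M}\sff_{\partial X}(\nu,\nu)\varphi_k^2\;\geq\;\int_{\Gamma}\varphi_k^2=|\Gamma|>0.
\]
The left side tends to $0$, which is a contradiction. Hence $\capa(E_j)>0$ for all $j$.

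The only delicate point is admissibility. The test functions in the definition of $\capa$ must be allowed to be nonzero on $\partial_0E_j$, that is, to have Neumann-type freedom there, which matches the definition where only $\partial_1E_j$ carries a condition. I also need the smoothing in Lemma \ref{capa_K} to preserve $\varphi_k=1$ on $K\supset\Gamma$; the lemma provides exactly this. The step I would check most carefully is that $K$ can be chosen to contain a piece of $\partial M$ of positive $(n-1)$-measure. This follows from $E_j\cap\partial M\neq\varnothing$ together with $E_j$ being a connected component of an open set, so it contains a boundary half-ball around any point of $\partial_0E_j$ that is not on $\partial_1E_j$.
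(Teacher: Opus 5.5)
Your proposal follows the paper's proof essentially verbatim: stability with $\Ric_X\ge 0$, $\sff_{\partial X}\ge 1$ gives $\int_M|\nabla\phi|^2\ge\int_{\partial M}\phi^2$, you pick $K$ containing a boundary piece $\Gamma$, and you invoke Lemma~\ref{capa_K}. However, the admissibility point you flagged is a genuine gap, and the paper's short proof passes over it in the same way.

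When $\capa(E_j)=0$, the functions Lemma~\ref{capa_K} actually produces are $v_i=\min\{1,u_i/a_i\}$, where $u_i=1$ on $\partial_1E_j$ and $a_i<1$. Hence $v_i\equiv 1$ on $\partial_1E_j$. Extended by zero, they jump across $\partial_1E_j$ and cannot be inserted into the stability inequality on $M$.

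Nor can you require the support to avoid $\partial_1E_j$. Among functions that vanish near $\partial_1E_j$ and equal $1$ on $K$, the Dirichlet energy has a positive lower bound for \emph{every} end, parabolic or not. This follows from the mixed Dirichlet--Neumann Poincar\'e inequality on a compact connected region containing $K\cup\partial_1E_j$; your own computation even gives the bound $|\Gamma|$. So $\capa(E_j)=0$ never yields your $\varphi_k$, and your contradiction is with a premise that was not actually derived.

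What vanishing capacity does give is weaker. Take test functions $\varphi_k$ for $\capa(E_j)$: equal to $1$ on $\partial_1E_j$, with energy tending to $0$, and converging to $1$ locally uniformly as in the proof of Lemma~\ref{capa_K}. Take also a fixed cutoff $\eta$ with $\eta=0$ near $\partial_1E_j$ and $\eta=1$ outside a compact collar $U$. Then $\eta\varphi_k$, extended by zero, is admissible. Letting $k\to\infty$ yields only that the $(n-1)$-dimensional measure of $\partial M\cap(E_j\setminus U)$ is at most $2\int_U|\nabla\eta|^2<\infty$.

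To conclude, you need an extra geometric input. Two possibilities:
\begin{itemize}
\item Show that a boundary end carries infinite boundary measure. In the paper's setting this follows from the uniform curvature bound, which keeps $\partial M$ non-collapsed.
\item As Cao--Shen--Zhu do for interior ends, use the Michael--Simon inequality with a boundary term. Finite boundary measure then forces finite volume of the end, contradicting Theorem~\ref{thm:infinite-volume}.
\end{itemize}

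Some such input is unavoidable. Take an $n$-manifold ($n\ge 3$) with a nonparabolic end and a parabolic end, and delete sufficiently small balls, accumulating at infinity, from the parabolic end. The inequality $\int_M|\nabla\phi|^2\ge\int_{\partial M}\phi^2$ still holds for all compactly supported $\phi$, yet the perforated end is a parabolic boundary end.
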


\begin{proof}
    By the stability inequality, we have for any compactly supported smooth function \(\phi\) on \(M\),
    \begin{align*}
        \int_{M} |\nabla \phi|^2\geq& \int_{M}(\Ric(\nu_M,\nu_M)+|\sff_M|^2)\phi^2+\int_{\partial M}\sff_{\partial X}(\nu_M,\nu_M)\phi^2\\
        \geq &\int_{\partial M}\phi^2.
    \end{align*}
    Since \(E_0 \cap \partial M \neq \emptyset\), take a subset \(K'\) of non-empty interior of \(E_0 \cap \partial M \neq \emptyset\).
    Now take a collar neighborhood of \(K'\) inside \(E_0\), denoted as \(K\), we may require \(K\cap \p_1 E_0=\emptyset\). We take a compactly supported function \(\phi \in C^1_{c}(E_0)\) and \(\phi\rvert_{K}=1\), then 
    \begin{align*}
        \int_{E_0}|\nabla\phi|^2=\int_M |\nabla \phi|^2 \geq \Vol(K')>0.
    \end{align*}
    By Lemma \ref{capa_K}, this implies the end \((E_j)_{j\in \NN}\) is nonparabolic.
\end{proof}

\subsection{Harmonic function and nonparabolic ends}
We will extend the results on ends of free boundary minimal hypersurfaces in \cite{wuyujie} to higher dimensions. 
Wu only discussed the case when $n+1=4$, the proof can be extended to all dimensions with different curvature assumptions.

\begin{definition}
    Let \(X^{n+1}\) be a Riemannian manifold with or without boundary, denote \(K(\cdot, \cdot)\) to be the sectional curvature, for any orthonormal $n$-frame \(e_1,...,e_n\) in a tangent space $T_pM$, define its \((n-1)\)-intermediate curvature at $p$ to be,
    \[
    \Ric_{n-1}(e_1,...,e_n)=\sum_{i=1}^{n-1} K(e_i,e_n).
    \]
    We say \(X^{n+1}\) has \(\Ric_{n-1}\geq 0\) if \(\Ric_{n-1}(e_1,...,e_n)\geq 0\) for any orthonormal $n$-frame and any $p\in X.$.
\end{definition}

\begin{definition}
    Given a Riemannian manifold $M$ with boundary, we denote \(A^{\p X}\) as the second fundamental form of \(\p X\). We say \(\p X\) is two-convex, denoted as \(A^{\p X}_2 \geq 0\), if for any orthonormal pair \(e_1, e_2\) tangent to $\partial X$, \(A^{\p X}(e_1,e_1)+A^{\p X}(e_2,e_2)\geq 0\).
\end{definition}

The following theorem extends \cite[Theorem 5.3]{wuyujie}
to higher dimensions under the corresponding intermediate
Ricci curvature assumption.
\begin{theorem}\label{one nonparabolic end}
Let $(X^{n+1},\partial X)$ have $\Ric_{n-1}^X\geq0$ and $A^{\p X}_2\geq0$, and let
$(M^n,\partial M)\hookrightarrow(X,\partial X)$ be a complete, two-sided,
stable, free-boundary minimal immersion  of infinite volume.  Then $M$ has at most
one nonparabolic end.
\end{theorem}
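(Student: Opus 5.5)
Following the Li--Tam / Cao--Shen--Zhu scheme as adapted in \cite{wuyujie}, we argue by contradiction and suppose $M$ has two nonparabolic ends $E_1,E_2$ with respect to a compact set $K$. The plan is to build a bounded, nonconstant harmonic function $u$ with finite Dirichlet energy and Neumann boundary condition on $\partial M$. A Bochner formula combined with stability should then force $u$ to be constant, which is the contradiction.

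\emph{Step 1: construction of $u$.} Take a smooth exhaustion $\Omega_i$ of $M$, perturbed as in the mixed-boundary discussion so that the boundary pieces meet $\partial M$ transversally at an angle in $(0,\pi/4)$. Let $u_i$ solve $\Delta u_i=0$ in $\Omega_i$ with $u_i=1$ on $\partial\Omega_i\cap E_1$, $u_i=0$ on the rest of $\partial\Omega_i\setminus\partial M$, and $\partial_\eta u_i=0$ on $\partial M\cap\Omega_i$. By Theorem 4.5 of \cite{wuyujie} these solutions are $C^{2,\alpha}$ up to the corners. The maximum principle gives $0\le u_i\le 1$. The energies $\int|\nabla u_i|^2$ are bounded uniformly, in terms of the capacities of $E_1$ and $E_2$, by the standard comparison argument. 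We extract a subsequence converging in $C^{2,\alpha}_{\mathrm{loc}}$ to a harmonic $u$ with $\partial_\eta u=0$ on $\partial M$ and $\int_M|\nabla u|^2<\infty$. Nonparabolicity of $E_1$ and $E_2$ (via Lemma \ref{capa_K}, applied to the barrier functions on each end) shows that $\sup_{E_1}u=1$ and $\inf_{E_2}u=0$. Hence $u$ is nonconstant.

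\emph{Step 2: Bochner and the boundary term.} We use the refined Kato/Bochner inequality for harmonic functions on a minimal hypersurface, combined with Gauss's equation. Under $\Ric_{n-1}^X\ge0$ this should give, on $\{\nabla u\neq0\}$,
\[
\Delta|\nabla u|\ge \frac{1}{n-1}\frac{|\nabla|\nabla u||^2}{|\nabla u|}-\bigl(|A|^2+\Ric_X(\nu,\nu)\bigr)|\nabla u|.
\]
Here the curvature term $\Ric_M(\nabla u,\nabla u)$ is estimated as in \cite{chodosh-li-stryker,wuyujie}; the key point is that $\Ric_M\ge -|A|^2+\text{(intermediate curvature)}$ in the direction $\nabla u/|\nabla u|$ after using $\operatorname{tr}A=0$. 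On $\partial M$, since $\partial_\eta u=0$ and $M$ meets $\partial X$ orthogonally, $\partial_\eta|\nabla u|^2=-2A^{\partial M}(\nabla u,\nabla u)$. The free boundary condition relates $A^{\partial M}$ to $A^{\partial X}$ restricted to $T\partial M$, where $\nu$ is tangent to $\partial X$. We then need
\[
A^{\partial X}(\tau,\tau)+A^{\partial X}(\nu,\nu)\ge0,\qquad \tau=\frac{\nabla u}{|\nabla u|}\in T\partial M,
\]
which follows from two-convexity $A^{\partial X}_2\ge0$. This yields $\partial_\eta|\nabla u|\ge -h_{\partial X}(\nu,\nu)|\nabla u|$.

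\emph{Step 3: stability test.} Set $f=|\nabla u|^{\frac{n-2}{n-1}}$ (the Cao--Shen--Zhu exponent) and insert $\varphi=f\eta_R$ into the stability inequality, with $\eta_R$ a cutoff equal to $1$ on $B_R$ and $|\nabla\eta_R|\le 1/R$. The interior curvature term cancels against the Bochner inequality, and the boundary term cancels against $\int_{\partial M}h_{\partial X}(\nu,\nu)\varphi^2$. What remains is
\[
c\int_{B_R}|\nabla f|^2\le C\int_M f^2|\nabla\eta_R|^2 = C\int_M|\nabla u|^{\frac{2(n-2)}{n-1}}|\nabla\eta_R|^2 .
\]

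\emph{Main obstacle.} The right-hand side is not controlled by the finite energy, because $\frac{2(n-2)}{n-1}<2$. As in \cite{wuyujie}, I would therefore replace the cutoff by one built from level sets of $u$ itself, in the style of Li--Tam. Concretely, take $\varphi=f\,\psi(u)\,\eta_R$ with $\psi$ supported in $\{\delta<u<1-\delta\}$. The co-area formula applies because $\int_{\{u=t\}}|\nabla u|$ is independent of $t$: $u$ is harmonic with Neumann data, so the flux through level sets is conserved. The infinite volume hypothesis, or alternatively Hölder together with the properness of $u$ on the region $\{\delta<u<1-\delta\}$, is what I expect to be needed to send $R\to\infty$. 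From the limit we get $\nabla f\equiv0$, so $|\nabla u|$ is constant. If this constant is nonzero, then $\int|\nabla u|^2<\infty$ forces $\mathrm{Vol}(M)<\infty$, contradicting infinite volume. Hence $\nabla u=0$, so $u$ is constant, contradicting Step 1.

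Making the boundary-corner regularity and the flux identity work in the presence of the Neumann boundary is the delicate point.
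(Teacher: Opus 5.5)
There is a genuine gap, and it comes from your choice of exponent in Step 3. With $f=|\nabla u|^{\frac{n-2}{n-1}}$ the error term is $\int_M|\nabla u|^{\frac{2(n-2)}{n-1}}|\nabla\eta_R|^2$. As you note, this is not controlled by $\int_M|\nabla u|^2<\infty$: H\"older would require an upper volume growth bound of order $R^{2(n-1)}$. The only volume hypothesis you have, infinite volume, points the wrong way for that.

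Your proposed remedy with level-set cutoffs $\psi(u)$ is only sketched ("I expect"), and it does not close with the given hypotheses:
\begin{itemize}
\item Nothing gives properness of $u$ on $\{\delta<u<1-\delta\}$.
\item The new term $\int\psi'(u)^2|\nabla u|^{2+\frac{2(n-2)}{n-1}}$ is not controlled by the flux $\int_{\{u=t\}}|\nabla u|$ without a gradient bound.
\item The $|\nabla\eta_R|$ term restricted to $\{\delta<u<1-\delta\}$ has the same problem as before.
\end{itemize}
So as written, the argument stops at the obstacle you identified.

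The missing observation is that exponent $1$ already works, and this is exactly the paper's route (Proposition \ref{thm:wu-arbitrary-dim}). Put $w=|\nabla u|$ and test stability with $\phi w$. Integrating by parts gives $\int|\nabla(\phi w)|^2=\int|\nabla\phi|^2w^2-\int\phi^2w\Delta w+\int_{\partial M}\phi^2w\,\partial_\eta w$. Now use the refined Kato--Bochner inequality $w\Delta w\geq\frac1{n-1}|\nabla w|^2+\Ric_M(\nabla u,\nabla u)$, together with $\Ric_M(e_1,e_1)\geq-\sum_j h_{1j}^2\geq-\frac{n-1}{n}|A|^2$; even your cruder bound $-(|A|^2+\Ric_X(\nu,\nu))$ suffices. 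This yields
\[
\frac1{n-1}\int_M\phi^2|\nabla w|^2\leq\int_M|\nabla\phi|^2w^2+\int_{\partial M}\bigl(w\,\partial_\eta w-A^{\partial X}(\nu,\nu)w^2\bigr)\phi^2 .
\]
The interior error is at most $R^{-2}\int_M|\nabla u|^2\to0$, so no volume growth is needed. Infinite volume enters only at the end, exactly as in your final step.

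Note also that the boundary inequality you state in Step 2 has the wrong direction. From $\partial_\eta u=0$ and the free boundary condition one gets $w\,\partial_\eta w=-A^{\partial X}(\tau,\tau)w^2$. Two-convexity then gives $\partial_\eta w\leq A^{\partial X}(\nu,\nu)w$, not $\partial_\eta w\geq -A^{\partial X}(\nu,\nu)w$. The correct inequality is precisely what makes the boundary integral above nonpositive. Your Step 1 construction and the closing argument otherwise match the paper.
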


To prove this theorem, we first show the following proposition.
\begin{proposition}[{\cite[Theorem 5.2]{wuyujie}}]\label{thm:wu-arbitrary-dim}
Let $(X^{n+1},\partial X)$ be complete with
$\Ric_{n-1}^X\geq0$, and let
$(M^n,\partial M)\hookrightarrow(X,\partial X)$ be a two-sided stable
free-boundary minimal immersion.  Let $u$ be harmonic on $M$ and satisfy
$\partial_\nu u=0$ on $\partial M$.  Then, for every
compactly supported smooth function $\phi$,
\begin{align}
 &\frac1n\int_M \phi^2|A_M|^2|\nabla u|^2\,d\mu
 +\frac1{n-1}\int_M \phi^2|\nabla |\nabla u||^2\,d\mu
 \notag\\
 \leq& \int_M |\nabla\phi|^2|\nabla u|^2\,d\mu +\int_{\partial M}
 \left(|\nabla u|\partial_\nu |\nabla u|-A^{\p X}(\eta,\eta)|\nabla u|^2\right)\phi^2\,d\sigma .
 \label{eq:wu-arbitrary-dim}
\end{align}
Here $A_M$ is the second fundamental form of $M\subset X$, $\nu$ is the
outward conormal of $\partial M\subset M$, and $\eta$ is a unit normal of
$M\subset X$.

If the boundary satisfies the two-convexity condition $A^{\p X}_2\geq0$, then the
boundary term in \eqref{eq:wu-arbitrary-dim} is nonpositive.

\end{proposition}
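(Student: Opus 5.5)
We plug the test function $\phi|\nabla u|$ into the stability inequality. We then combine the result with the Bochner formula and a refined Kato inequality for harmonic functions on minimal hypersurfaces, and use the free boundary condition to identify the boundary term. This is the Schoen--Yau / Li--Wang argument adapted to the boundary setting.

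\textbf{Step 1: stability.} Since $\phi|\nabla u|$ is Lipschitz and compactly supported, the stability inequality gives
\[
\int_M (|A_M|^2+\Ric_X(\eta,\eta))\phi^2|\nabla u|^2+\int_{\partial M}A^{\p X}(\eta,\eta)\phi^2|\nabla u|^2\le \int_M |\nabla(\phi|\nabla u|)|^2 .
\]
We expand the right-hand side as
\[
\int_M |\nabla\phi|^2|\nabla u|^2+2\phi|\nabla u|\langle\nabla\phi,\nabla|\nabla u|\rangle+\phi^2|\nabla|\nabla u||^2 .
\]

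\textbf{Step 2: Bochner.} By Bochner, $\tfrac12\Delta|\nabla u|^2=|\nabla^2u|^2+\Ric_M(\nabla u,\nabla u)$. We multiply by $\phi^2$ and integrate by parts. This produces the boundary term $\int_{\partial M}\phi^2|\nabla u|\partial_\nu|\nabla u|$ and gives the identity
\[
\int_M \phi^2\bigl(|\nabla^2u|^2+\Ric_M(\nabla u,\nabla u)\bigr)
=\int_{\partial M}\phi^2|\nabla u|\partial_\nu|\nabla u| -\int_M 2\phi|\nabla u|\langle\nabla\phi,\nabla|\nabla u|\rangle-\int_M\phi^2|\nabla|\nabla u||^2 .
\]
The cross term here cancels the cross term from Step 1 when the two are added. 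Gauss's equation with $H=0$ gives
\[
\Ric_M(e,e)=\sum_{i}K_X(e,e_i)-\Ric_X(\eta,\eta)\cdot 0\ \text{(appropriately)}-\sum_j A(e,e_j)^2 .
\]
More precisely, $\Ric_M(e,e)=\Ric_X(e,e)-K_X(e,\eta)-|A(e,\cdot)|^2$. We then need $\Ric_X(\eta,\eta)+\Ric_M(e,e)\ge -|A(e,\cdot)|^2+(\text{intermediate curvature terms})$. In an adapted frame, $\Ric_X(\eta,\eta)+\Ric_X(e,e)-K(e,\eta)$ is a sum of $(n-1)$-intermediate curvatures, which is $\ge0$ by $\Ric^X_{n-1}\ge0$; this is the step where the curvature hypothesis enters. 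The standard estimate $|A(e,\cdot)|^2\le \frac{n-1}{n}|A|^2$ for trace-free $A$ and unit $e$ then leaves $\frac1n|A|^2|\nabla u|^2$.

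\textbf{Step 3: Kato.} For harmonic $u$ we have the refined Kato inequality $|\nabla^2u|^2\ge \frac{n}{n-1}|\nabla|\nabla u||^2$. It yields the coefficient $\frac1{n-1}$ on $|\nabla|\nabla u||^2$ after the $\phi^2|\nabla|\nabla u||^2$ terms combine. Adding the two inequalities gives \eqref{eq:wu-arbitrary-dim}.

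\textbf{Step 4: the boundary term.} On $\partial M$, the conditions $\partial_\nu u=0$ and the free boundary condition ($\nu$ is the normal of $\partial X$) give $\nabla u\in T\partial M$. Differentiating $\langle\nabla u,\nu\rangle=0$ along $\partial M$ gives
\[
\nabla^2u(\nabla u,\nu)=-\sff_{\partial M}(\nabla u,\nabla u),
\]
where $\sff_{\partial M}$, the second fundamental form of $\partial M\subset M$, equals $A^{\p X}$ restricted to $T\partial M$. Hence
\[
|\nabla u|\partial_\nu|\nabla u|=-A^{\p X}(\nabla u,\nabla u)\quad\text{(sign according to convention)}.
\]
The boundary integrand therefore becomes $-\bigl(A^{\p X}(e,e)+A^{\p X}(\eta,\eta)\bigr)|\nabla u|^2$ with $e=\nabla u/|\nabla u|$. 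Since $e\perp\eta$ are both tangent to $\partial X$, this is $\le0$ by two-convexity.

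\textbf{Main obstacle.} The main difficulty is the sharp algebra in Step 2. We must pair $\Ric_X(\eta,\eta)$ with the ambient part of $\Ric_M(e,e)$ so that only $\Ric_{n-1}$-type sums appear, while keeping the constant $\frac1n$ in front of $|A|^2$. A secondary difficulty is the regularity of $|\nabla u|$ near its zero set, which we handle by replacing $|\nabla u|$ with $\sqrt{|\nabla u|^2+\epsilon}$ and letting $\epsilon\to0$.
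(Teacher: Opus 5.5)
Your proposal is the paper's argument: you test stability with $\phi|\nabla u|$, combine it with Bochner and the refined Kato inequality, and use Gauss with $H=0$ and $\Ric^X_{n-1}\ge 0$ together with $|A(e,\cdot)|^2\le\frac{n-1}{n}|A|^2$. (The paper drops $\Ric_X(\eta,\eta)\ge0$ and $\sum_{i\ge2}K_X(e_1,e_i)\ge0$ separately rather than pairing them.) You then identify the boundary integrand as $-(A^{\partial X}(e,e)+A^{\partial X}(\eta,\eta))|\nabla u|^2$ via the Neumann and free boundary conditions, exactly as the paper does.

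There is one computational slip to fix in Step 2. Integrating $\phi^2\cdot\frac12\Delta|\nabla u|^2$ by parts gives only $\int_{\partial M}\phi^2|\nabla u|\partial_\nu|\nabla u|-\int_M 2\phi|\nabla u|\langle\nabla\phi,\nabla|\nabla u|\rangle$, so the extra term $-\int_M\phi^2|\nabla|\nabla u||^2$ in your identity must be deleted. As written, it would cancel the gradient term from Step 1, and Kato would then produce the coefficient $\frac{n}{n-1}$. The corrected identity gives exactly the combination $1-\frac{n}{n-1}=-\frac{1}{n-1}$ that you describe in Step 3.
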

\begin{proof}
The proof extends the result of \cite{wuyujiennsc} with \(\Ric^X_2\) replaced with \(\Ric^X_{n-1}\), we summarize the proof below. Denote \(w=|\nabla u|\), plug $\phi\omega$ into the free-boundary stability
inequality and, by  integration by parts, we have
\begin{align}
 \int_M
 \bigl(|A_M|^2+\Ric_X(\eta,\eta)\bigr)\phi^2w^2\,d\mu \notag
 &\leq
 \int_M|\nabla\phi|^2w^2\,d\mu
 -\int_M\phi^2w\Delta w\,d\mu \notag\\
 &+
 \int_{\partial M}
 \bigl(w\partial_\nu w-A^{\partial X}(\eta,\eta)w^2\bigr)
 \phi^2\,d\sigma.
 \label{eq:wu-after-phi-w}
\end{align}
The Bochner formula and the improved Kato inequality for a harmonic function
gives
\[
 w\Delta w\geq \Ric_M(\nabla u,\nabla u)
       +\frac1{n-1}|\nabla w|^2.
\]
Choose an orthonormal basis so that \(e_1=\frac{\nabla u}{|\nabla u|}\) when \(\nabla u \neq 0\). Writing $h_{ij}=A_M(e_i,e_j)$, the Gauss equation, the minimality of $M$ and the assumption \(\Ric_{n-1}^X\geq 0\)
give
\begin{align}
        \Ric_M(e_1,e_1)
        &=\sum_{i=2}^n\operatorname{Rm}_X(e_1,e_i,e_i,e_1)
          -\sum_{j=1}^n h_{1j}^2 \notag\\
        &\geq-\sum_{j=1}^n h_{1j}^2.
        \label{eq:gauss-in-gradient-direction}
\end{align}
Moreover, $H=0$ condition gives (see equation (4.6) in \cite{chodosh-li-stryker}),
\begin{equation}\label{eq:tracefree-row-estimate}
        \sum_{j=1}^n h_{1j}^2
        \leq\frac{n-1}{n}|A_M|^2.
\end{equation}
Consequently
\[
 w\Delta w\geq-\frac{n-1}{n}|A_M|^2w^2
       +\frac1{n-1}|\nabla w|^2.
\]
Substituting it into the stability inequality  and noting that \(\Ric_X(\eta,\eta)\geq 0\) due to \(\Ric^X_{n-1}\geq 0\) gives
\eqref{eq:wu-arbitrary-dim}.

Where
$w\neq0$, the Neumann condition implies that
$e=\nabla u/w$ is tangent to $\partial M$, and
\[
 w\partial_\nu w-A(\eta,\eta)w^2
 =-\big(A(e,e)+A(\eta,\eta)\big)w^2\leq0.
\]
Thus
\begin{equation}\label{eq:wu-arbitrary-dim-no-boundary}
 \frac1n\int_M \phi^2|A_M|^2w^2\,d\mu
 +\frac1{n-1}\int_M \phi^2|\nabla w|^2\,d\mu
 \leq\int_M |\nabla\phi|^2w^2\,d\mu.
\end{equation}
\end{proof}

Then the proof of Theorem \ref{one nonparabolic end} can be carried exactly as in \cite{chodosh-li-stryker} and \cite{wuyujie}, the idea is that existence of two (or more) nonparabolic ends produces a non-constant bounded harmonic function $u$ on $M$ with finite Dirichlet energy. Apply Proposition \ref{thm:wu-arbitrary-dim} with the standard cutoff function $\phi$, we conclude that $|\nabla|\nabla u||=0$ on $M$. Thus $|\nabla u|$ is constant on $M$. Since $u$ has finite Dirichlet energy and volume of $M$ is infinite, we have $|\nabla u|=0$ on $M$, i.e., $u$ is constant. This leads to a contradiction.

Moreover,  a compact boundary component and a nonparabolic end can produce a nontrivial bounded harmonic function with finite Dirichlet energy. So combining Proposition \ref{thm:wu-arbitrary-dim} with the same argument for proof of Theorem \ref{one nonparabolic end} we show that any compact boundary component cannot coexist with a nonparabolic (interior or boundary) end. 

\begin{corollary}[{\cite[Theorem 5.8]{wuyujiennsc}}]\label{thm:no-compact-boundary}
Let $(X^{n+1},\partial X)$ satisfy
$\Ric_{n-1}^X\geq0$ and $A_2^{\partial X}\geq0$, and let
$(M^n,\partial M)\hookrightarrow(X,\partial X)$ be complete, two-sided,
stable, free-boundary minimal, and of infinite volume.  If $M$ has a
nonparabolic end, then every connected component of $\partial M$ is
noncompact.
\end{corollary}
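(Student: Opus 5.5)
Suppose, for contradiction, that $M$ has a nonparabolic end $E$ and a compact component $\Gamma$ of $\partial M$. The plan is to build a nonconstant bounded harmonic function $u$ with finite Dirichlet energy and Neumann condition on $\partial M\setminus\Gamma$, and then feed it into Proposition~\ref{thm:wu-arbitrary-dim} exactly as in the proof of Theorem~\ref{one nonparabolic end}.

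\textbf{Construction of $u$.} Fix a compact set $K$ containing a collar of $\Gamma$ such that $E$ is represented by a component of $M\setminus K$. Take an exhaustion $\Omega_i\supset K$ with the mixed-boundary regularity discussed before Lemma~\ref{capa_K}. On each $\Omega_i$, let $u_i$ solve
\[
\Delta u_i=0 \text{ in } \Omega_i,\qquad
u_i=1 \text{ on } \Gamma,\qquad
u_i=0 \text{ on } \partial\Omega_i\cap \operatorname{Int}M,\qquad
\partial_\nu u_i=0 \text{ on } (\partial M\setminus\Gamma)\cap\Omega_i.
\]
Then $0\le u_i\le1$. The Dirichlet energies are uniformly bounded, by the energy of a fixed cutoff equal to $1$ near $\Gamma$. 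A subsequence therefore converges in $C^{2,\alpha}_{\rm loc}$ to a harmonic $u$ with $u=1$ on $\Gamma$, $\partial_\nu u=0$ on $\partial M\setminus \Gamma$, and $\int_M|\nabla u|^2<\infty$.

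\textbf{$u$ is nonconstant.} Here I use the nonparabolicity of $E$. By Lemma~\ref{capa_K}, $\operatorname{Cap}(E)>0$, so the capacity of $E_j$ relative to its inner boundary $\partial_1 E_j$ is bounded below. A standard comparison then shows $\inf_E u=0$: $u_i$ restricted to $E\cap\Omega_i$ is dominated by the harmonic measure $\max u_i\cdot h_i$ of $\partial_1E$, which converges to a function with infimum $0$ on a nonparabolic end (as in \cite{li2004lectures}). Since $u=1$ on $\Gamma$, $u$ is nonconstant.

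\textbf{Contradiction.} Apply Proposition~\ref{thm:wu-arbitrary-dim} with the standard cutoff $\phi_R$, equal to $1$ on $B_R$, supported in $B_{2R}$, with $|\nabla\phi_R|\le 1/R$. The proposition was stated for Neumann data on all of $\partial M$, so the boundary integral over $\Gamma$, where $u$ is Dirichlet, must be handled separately; I expect this to be the main obstacle. Two routes seem possible:
\begin{itemize}
\item Away from $\Gamma$, two-convexity makes the boundary term nonpositive exactly as in the proposition.
\item Alternatively, choose $\phi$ vanishing on a fixed neighborhood $U$ of the compact set $\Gamma$. This yields the estimate on $M\setminus U$ with an additional error $\int |\nabla\phi|^2w^2$ supported near $U$.
\end{itemize}
The second route alone would not kill that error, so I prefer the following: vary the cutoff in $R$ only, keep $\phi=1$ near $\Gamma$, and deal with the $\Gamma$ boundary term via the direct computation. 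On $\Gamma$, $\nabla u=(\partial_\nu u)\nu$ is normal, so $w\,\partial_\nu w$ involves $\nabla^2u(\nu,\nu)=-H_\Gamma\,\partial_\nu u$ (using $\Delta u=0$ and $u$ constant on $\Gamma$). Since $\Gamma$ is compact, this contributes a finite fixed constant.

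Letting $R\to\infty$, finite energy gives $\int_M|\nabla\phi_R|^2w^2\to0$. Hence $\int_{M}|\nabla w|^2$ and $\int_M |A|^2w^2$ are bounded, and in the limit only the fixed $\Gamma$-term remains. To close, I would instead follow \cite[Theorem 5.8]{wuyujiennsc}: pass to the double of a neighborhood, or replace the Dirichlet condition by taking $u$ as the limit of functions that are Neumann everywhere and equal $1$ on a collar. In that formulation the boundary terms vanish or have the right sign, and one concludes $|\nabla w|\equiv0$. Then $w$ is constant, and since $\int w^2<\infty$ while $\operatorname{Vol}(M)=\infty$, we get $w\equiv0$. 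So $u$ is constant, contradicting the previous step.
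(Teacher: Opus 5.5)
Your overall strategy is the one the paper intends. You impose Dirichlet data $u=1$ on the compact component $\Gamma$, Neumann data on $\partial M\setminus\Gamma$, and $\inf_E u=0$ on the nonparabolic end, then apply Proposition~\ref{thm:wu-arbitrary-dim} and the infinite-volume argument of Theorem~\ref{one nonparabolic end}. Your construction of $u$ and your nonconstancy argument are fine.

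The gap is at the one point where $\Gamma$ matters. After letting $R\to\infty$ you are left with
\[
\frac1n\int_M|A_M|^2w^2+\frac1{n-1}\int_M|\nabla w|^2\le\int_\Gamma\bigl(w\,\partial_\nu w-A^{\partial X}(\eta,\eta)w^2\bigr)\,d\sigma .
\]
You treat the right-hand side as an unsigned finite constant, and that gives no contradiction. Your fallbacks do not repair this. A harmonic function with Neumann data on all of $\partial M$ that equals $1$ on an open collar is constant by unique continuation. If the collar is excised instead, the Dirichlet condition moves to an interior hypersurface whose mean curvature has no sign. You also do not say how a doubled neighborhood would carry the stability inequality.

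The missing idea is the sign of the $\Gamma$-term, and you were one line away from it. You correctly found $w\,\partial_\nu w=u_\nu\nabla^2u(\nu,\nu)=-H_\Gamma w^2$ on $\Gamma$, where $H_\Gamma$ is the mean curvature of $\Gamma\subset M$ with respect to the outward conormal $\nu$.

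By the free boundary condition, $\nu$ is the outward normal of $\partial X$. Hence for $X,Y\in T\Gamma$ we have $\sff_{\Gamma\subset M}(X,Y)=\langle\nabla_X\nu,Y\rangle=A^{\partial X}(X,Y)$, so $H_\Gamma=\sum_{i=2}^nA^{\partial X}(e_i,e_i)$. Since $T\partial X=T\Gamma\oplus\R\eta$ along $\Gamma$,
\[
w\,\partial_\nu w-A^{\partial X}(\eta,\eta)w^2=-H_{\partial X}\,w^2\le 0 .
\]
The inequality holds because two-convexity forces mean convexity. If the smallest principal curvature $\kappa_1$ is negative, every other one is at least $-\kappa_1$, so the trace is at least $(n-2)|\kappa_1|\ge0$.

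So the Dirichlet component contributes with the right sign. It uses the full mean curvature of $\partial X$, whereas the Neumann part uses only a two-plane. Moreover, \eqref{eq:wu-after-phi-w} comes from integration by parts alone, not from the Neumann condition, so it holds verbatim for your $u$.

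With this sign, $\nabla w\equiv0$. Finite energy together with $\operatorname{Vol}(M)=\infty$ then gives $w\equiv 0$, contradicting $u|_\Gamma=1>0=\inf_E u$. This sign computation is exactly what justifies the paper's one-line claim that a compact boundary component and a nonparabolic end can be fed into the argument of Theorem~\ref{one nonparabolic end}.
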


\begin{corollary}
[{\cite[Lemma 5.9]{wuyujiennsc}}]\label{lem:boundary-meets-end}
Under the hypotheses of
Corollary \ref{thm:no-compact-boundary}, let $C_1\subset C_2\subset\cdots$ be a compact exhaustion of M, and let $E_k$ be
nested nonparabolic components of $M\setminus C_k$. Then every connected component $\Gamma$ of
$\partial M$ satisfies
\begin{equation*}\label{eq:boundary-meets-end}
       (\sharp) \quad \quad \quad  (\Gamma\setminus C_k)\cap E_k\neq\varnothing
        \qquad\text{for every }k.
\end{equation*}
\end{corollary}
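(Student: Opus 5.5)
We argue by contradiction, using the harmonic function construction already behind Theorem \ref{one nonparabolic end} and Corollary \ref{thm:no-compact-boundary}. Suppose $\Gamma$ is a connected component of $\partial M$ and that for some $k_0$ we have $(\Gamma\setminus C_{k_0})\cap E_{k_0}=\varnothing$. The nesting $E_{k+1}\subset E_k$ and $C_k\subset C_{k+1}$ then give the same disjointness for every $k\geq k_0$. By Corollary \ref{thm:no-compact-boundary}, $\Gamma$ is noncompact. So $\Gamma\setminus C_{k_0}$ is nonempty and unbounded.

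Since $\Gamma$ is connected, it meets $M\setminus C_{k_0}$ only in components other than $E_{k_0}$. First we show that $\Gamma$ must enter an end different from the nonparabolic one. The set $\Gamma\setminus C_{k_0}$ is unbounded but lies in $M\setminus(C_{k_0}\cup E_{k_0})$. Only finitely many components of $M\setminus C_{k_0}$ meet $\partial C_{k_0}$, after perturbing $C_{k_0}$ to be a smooth compact domain, and the bounded ones are precompact. Hence $\Gamma$ enters some unbounded component $F\neq E_{k_0}$ of $M\setminus C_{k_0}$ and leaves every compact set inside $F$. Thus $F$ contains a boundary end, nested inside $F$ and disjoint from $E_{k_0}$. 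By the lemma above on boundary ends, which uses $\sff_{\partial X}\geq 1$ (as in the ball), this end is nonparabolic.

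Under only the two-convexity hypothesis, one could instead use $\partial_0F$ directly with a Neumann-versus-Dirichlet capacity. I would not pursue that. In the situation of the paper the ambient space is the ball, where the boundary-end lemma applies, and I would state that we use it there.

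We now have two distinct nonparabolic ends: the one represented by $(E_k)$ and the boundary end inside $F$. This contradicts Theorem \ref{one nonparabolic end}, and the corollary follows.

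The main obstacle is the middle step: showing that an unbounded piece of $\Gamma$ avoiding $E_{k_0}$ really produces a separate end, nonparabolic under the stated hypotheses. If one wants to avoid the boundary-convexity lower bound, the fallback is to build the harmonic function directly. Solve the mixed problem on exhausting domains $\Omega_i$ with $u_i=1$ on $\partial_1E_{k_0}$, $u_i=0$ on $\partial\Omega_i\cap(M\setminus E_{k_0})$, and Neumann conditions on $\partial M$, using the mixed regularity quoted from \cite{wuyujie}. Then pass to a limit. Nonparabolicity of $E_{k_0}$ ensures the limit is nonconstant with finite energy, and Proposition \ref{thm:wu-arbitrary-dim} with the cutoff argument gives the contradiction exactly as in Corollary \ref{thm:no-compact-boundary}.
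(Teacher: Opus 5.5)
\textbf{Main route versus the stated generality.} Your main argument is correct when $\partial X$ is uniformly convex, for example in the ball. A K\"onig-type selection gives a boundary end inside some unbounded component $F\neq E_{k_0}$. The boundary-end lemma makes that end nonparabolic, and Theorem \ref{one nonparabolic end} gives the contradiction. This differs from the paper's route, and is simpler, but it uses $\sff_{\partial X}\geq1$ essentially. The corollary is stated under the hypotheses of Corollary \ref{thm:no-compact-boundary}, which are only $\Ric^X_{n-1}\geq0$ and $A_2^{\partial X}\geq0$. Under these, $A^{\partial X}(\eta,\eta)$ can be zero or negative, and a boundary end need not be nonparabolic. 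So that route proves only a special case. The general statement then rests on your fallback, which has a genuine gap.

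\textbf{The gap in the fallback.} You impose Neumann data on all of $\partial M$, including $\Gamma$. Then $\Gamma$ never enters the construction, and the hypothesis $(\Gamma\setminus C_{k_0})\cap E_{k_0}=\varnothing$ is never used. What you describe is the Li--Tam construction for two nonparabolic ends. Its limit is nonconstant only if the complement of $E_{k_0}$ is itself nonparabolic. When $E_{k_0}$ is the only nonparabolic end, which Theorem \ref{one nonparabolic end} guarantees, the limit is the constant $1$. Nonparabolicity of $E_{k_0}$ alone gives $\sup_{E_{k_0}}u=1$, not nonconstancy.

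\textbf{The missing idea.} Make $\Gamma$ the second anchor by giving it \emph{Dirichlet} data. Take $u_i=0$ on $\Gamma\cap\Omega_i$ and on $\partial\Omega_i\setminus(E_{k_0}\cup\partial M)$, take $u_i=1$ on $\partial\Omega_i\cap E_{k_0}$, and impose $\partial_\nu u_i=0$ on $\partial M\setminus\Gamma$.
\begin{itemize}
\item Finite energy: since $\Gamma\cap E_{k_0}=\varnothing$, the competitor equal to $1-f_i$ on $E_{k_0}$ and $0$ elsewhere is admissible, where $f_i$ are the capacity potentials of $E_{k_0}$. It bounds $\int|\nabla u_i|^2$ by $\int|\nabla f_i|^2\to\capa(E_{k_0})$.
\item Nonconstancy: the maximum principle gives $1-u\leq f\not\equiv1$ on $E_{k_0}$. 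Hence $u\not\equiv0$, while $u|_\Gamma=0$.
\item Boundary term: you must recheck the boundary term of Proposition \ref{thm:wu-arbitrary-dim} on $\Gamma$, where the Neumann condition is no longer available. There $\nabla u=(\partial_\nu u)\nu$, and $\Delta u=0$ gives $\nabla^2u(\nu,\nu)=-H_{\partial M}\partial_\nu u$, with $H_{\partial M}$ the mean curvature of $\partial M\subset M$ with respect to $\nu$. By the free boundary condition,
\[
w\,\partial_\nu w-A^{\partial X}(\eta,\eta)w^2=-\bigl(H_{\partial M}+A^{\partial X}(\eta,\eta)\bigr)w^2=-H_{\partial X}w^2\leq0,
\]
since two-convexity implies mean convexity.
\end{itemize}
With this, the argument of Theorem \ref{one nonparabolic end} applies: $|\nabla u|$ is constant, hence zero by infinite volume, which is a contradiction. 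The paper's sketch only says that $\Gamma$ and a nonparabolic end disjoint from it produce a nontrivial bounded harmonic function of finite energy. For $\Gamma$ to matter at all, it has to carry Dirichlet data as above.
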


Corollary \ref{lem:boundary-meets-end} says that every noncompact boundary component meets a nonparabolic end. In fact, if not, a noncompact boundary component and a nonparabolic end that has empty intersection with this boundary component can produce a nontrivial bounded harmonic function with finite Dirichlet energy. Then the rest of the proof is the same as the argument for Theorem \ref{one nonparabolic end}.

\vskip.2cm

\subsection{Infinite volume}
In the paper of Cao-Shen-Zhu \cite{Cao-Shen-Zhu-infinitevolume}, non-collapsing result of complete non-compact minimal hypersurfaces inside the Euclidean space is used. The analogous non-collapsing result for extrinsic balls centered at a boundary point of a free boundary minimal immersion into a compact Riemannian manifold is well-known, see for example \cite{Zhou-Curvature-estimates-for-stable-fbms}. Below we provide a proof for intrinsic balls in the case the ambient manifold is a compact smooth domain in \(\R^{n+1}\).

\begin{theorem}\label{thm:infinite-volume}
Let $\Omega\subset\mathbb R^{n+1}$ be a smooth bounded domain.  Let
\[
        F:(M^n,\partial M)\longrightarrow
        (\overline{\Omega},\partial\Omega)
\]
be a complete noncompact free-boundary minimal immersion.  Then each end of $M$ has infinite volume. In particular,
\[
        \operatorname{Vol}(M)=\infty.
\]
\end{theorem}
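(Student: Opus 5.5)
The plan is to prove a uniform non-collapsing estimate for intrinsic balls: there exist $r_0>0$ and $c>0$, depending only on $n$ and $\Omega$, such that
\[
\operatorname{Vol}(B^M_{r}(p))\ge c\,r^n \qquad \text{for every } p\in M \text{ and } 0<r\le r_0 .
\]
Granting this, the theorem follows directly. Fix a compact $K$ and an unbounded component $E$ of $M\setminus K$. Since $E$ is unbounded and $M$ is complete, $E$ contains points $p_1,p_2,\dots$ with $d_M(p_i,K)\ge 2r_0$ and $d_M(p_i,p_j)\ge 2r_0$ for $i\ne j$. The intrinsic balls $B^M_{r_0}(p_i)$ are then pairwise disjoint. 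Each one lies in $E$, because it is connected and misses $K$. Hence $\operatorname{Vol}(E)\ge \sum_i c\,r_0^n=\infty$.

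To prove the lower bound I will use the monotonicity formula for free boundary minimal submanifolds in a smooth domain, in the form used in \cite{Zhou-Curvature-estimates-for-stable-fbms}. For a point $p$ within distance $r_0$ of $\partial\Omega$, reflect across $\partial\Omega$ via the nearest-point projection, which is smooth in a tubular neighborhood of width $2r_0$. This gives an almost-monotone quantity $e^{Cr}\,r^{-n}\,\mu(F^{-1}(B^{\mathbb R^{n+1}}_r(F(p)))\cup\text{reflected part})$. Because the immersion is smooth, its density at $F(p)$ is at least $1$ (interior point) or $1/2$ (boundary point). For interior points far from $\partial\Omega$, the usual Euclidean monotonicity applies instead. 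Either way we obtain extrinsic non-collapsing.

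The key step, and the main obstacle, is to pass from extrinsic balls to intrinsic balls. A nonproper immersion can return close to $F(p)$ along sheets that are intrinsically far away, so $F^{-1}(B_r(F(p)))$ may contain many sheets. I will avoid this by localizing the monotonicity to the connected component $\Sigma_r$ of $F^{-1}(\overline{B_r(F(p))})$ containing $p$. In the derivation of monotonicity one tests the first variation formula against the vector field $\gamma(|x-F(p)|)(x-F(p))$, modified by the reflection near $\partial\Omega$ so that it is tangent to $\partial\Omega$ there; the free boundary condition then kills the boundary term. This argument works on any piece of $M$ whose relative boundary lies in the sphere $\partial B_r(F(p))$. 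Applying it to $\Sigma_s$ for $s\le r$ gives $\mu(\Sigma_r)\ge c\,r^n$, since $\Sigma_s$ is a compact smooth piece with those properties. Compactness of $\Sigma_r$ comes from completeness: if $\Sigma_r$ were noncompact, it would already have infinite volume and we would be done at $p$.

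Finally, I need $\Sigma_r$ to lie inside an intrinsic ball. For this I only need to work on a slightly smaller piece. Let $\Sigma'_r$ be the set of points of $\Sigma_r$ that can be joined to $p$ within $\Sigma_r$ by a path of length at most $Lr$, for a fixed constant $L$. If the localized monotonicity fails, I fall back on a cleaner route: every point of $\Sigma_r$ can be reached from $p$ by a path inside $\Sigma_r$, and a coarea or length argument on $\Sigma_r$ using $|\nabla^M |x-F(p)||\le 1$ shows that $\Sigma_{r/2}$ stays inside the intrinsic ball $B^M_{Cr}(p)$ up to volume comparable to $r^n$. Then $\operatorname{Vol}(B^M_{Cr}(p))\ge\mu(\Sigma_{r/2})\ge c\,r^n$, and rescaling $r$ finishes the proof. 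I expect this intrinsic/extrinsic comparison to require the most care.
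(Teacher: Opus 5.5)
\textbf{There is a genuine gap: you never get from the extrinsic component back to an intrinsic ball.} Your reduction to a uniform bound $\operatorname{Vol}(B^M_r(p))\ge c\,r^n$ is fine, and so is the disjoint-balls argument inside an end; both match the paper. The problem is that you only ever bound $\mu(\Sigma_r)$, where $\Sigma_r$ is the component of $F^{-1}(\overline{B_r(F(p))})$ containing $p$.

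The inequality $|\nabla^M|x-F(p)||\le 1$ says only that $|F(q)-F(p)|\le d_M(p,q)$, i.e.\ $B^M_r(p)\subset\Sigma_r$. This is the wrong direction: a lower bound on the larger set $\Sigma_r$ gives nothing for $B^M_r(p)$. The reverse containment $\Sigma_{r/2}\subset B^M_{Cr}(p)$, even ``up to volume comparable to $r^n$'', does not follow from minimality plus this gradient bound. Locally, a minimal piece can be a thin flat spiraling strip. Then $\mu(\Sigma_{r/2})\sim r^n$, while intrinsic balls of radius $Cr$ have tiny volume. Only completeness and the free boundary condition rule this out, and your coarea/length sketch uses neither.

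Your compactness claim for $\Sigma_r$ is also circular. You would need a non-collapsing estimate to know that a noncompact closed subset of $M$ has infinite volume. Even then, that would bound $\mu(\Sigma_r)$, not $\operatorname{Vol}(B^M_r(p))$.

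\textbf{The missing idea is to avoid extrinsic balls and apply the divergence theorem directly on the intrinsic ball.}
\begin{itemize}
\item For $p\in\partial M$, set $x=F(p)$ and take $X(y)=y-x-\langle\pi(y)-x,\nu_{\pi(y)}\rangle\nu_{\pi(y)}$. This field is tangent to $\partial\Omega$ along $\partial\Omega$, and satisfies $\|DX-\mathrm{Id}\|\le C_0|y-x|$ and $|X(y)|\le|y-x|(1+C_0|y-x|)$.
\item Minimality gives $\operatorname{div}_M X^T=\operatorname{div}_M X\ge n(1-C_0r)$ on $B^M_r(p)$.
\item The free boundary condition kills the flux through $\partial M\cap B^M_r(p)$.
\item On $\partial B^M_r(p)$ you have $|X^T|\le r(1+C_0r)$, precisely because $|F(q)-x|\le d_M(p,q)$. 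So the extrinsic/intrinsic comparison is used only in the direction that actually holds.
\end{itemize}
This yields $n(1-C_0r)V(r)\le r(1+C_0r)V'(r)$ for $V(r)=\operatorname{Vol}(B^M_r(p))$. Together with $V(r)/r^n\to\omega_n/2$, this gives $V(r)\ge c\,r^n$ for $r<r_0$. Points with $d_M(p,\partial M)\ge r$ are handled with the unmodified position field. The remaining points are handled via $B^M_r(q)\subset B^M_{2r}(p)$ for a nearby $q\in\partial M$. Since $\overline{B^M_r(p)}$ is compact by completeness, the compactness issue you ran into does not arise.
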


\begin{proof}
The non-collapsing volume estimate for interior intrinsic balls is well known. For intrinsic balls centered at a boundary point, we follow the proof idea of \cite{Zhou-Curvature-estimates-for-stable-fbms}. Denote the induced metric on \(M\) as \(g\).
Instead of using radial vector field around an interior point, we adapt the vector field up to second order so that it's tangential along \(\p \Omega\). 
First since \(\Omega\) is bounded, take a uniform collar neighborhood \(U\) of \(\p \Omega
\) and denote the nearest point projection as \(\pi:U \rightarrow \p \Omega\). We denote the outward pointing unit normal along \(\p \Omega\) to be \(\nu\).
There is \(r_0>0\) so that \(B_r(x)\subset U\) for any \(x\in \p \Omega\) and \(r<r_0\).
Then for any \(x\in \p\Omega\) for \(y\in B_{r}(x) \subset  U\) for \(r<r_0\),
\[
X_x(y)=y-x-P_{{\pi(y)}}, \quad P_{\pi(y)}:=\langle \pi(y)-x, \nu_{\pi(y)} \rangle \nu_{\pi(y)}
\]
is the desired ambient vector field.

Indeed we have \(X_x(x)=0\) and 
\[
DX_x(y)-\operatorname{Id}=-D\langle \pi(y)-x,\nu_{\pi(y)} \rangle \nu_{\pi(y)}-\langle \pi(y)-x,\nu_{\pi(y)} \rangle D\nu_{\pi(y)},\]
where the second term vanishes at \(y=x\); and note if \(v\in T_x\p \Omega\), then \(D_v\pi(x) \cdot \nu_x=v\cdot\nu_x=0\).
By further shrinking \(r_0\) so that normal coordinates exists in \(B_r(x)\) for any \(r<r_0\) and \(x\in \p\Omega\), we have there is \(C_0\) such that for any \(y\in B_r(x)\),
\[
\|DX_x(y)-\operatorname{Id}\| \leq C_0|y-x|, \quad |X_x(y)-(y-x)| \leq C_0|y-x|^2.
\]
For any \(p\in \p M\), let \(x=F(p)\) and take the intrinsic ball \(B^M_r(p)\) and denote \(y=F(q)\) for \(q\in B^M_r(p)\), then since \(F\) is minimal, given an orthonormal basis \(e_i \in T_qM\), then for the pullback vector field \(X(p):=X_x(F(p))\) we have,
\begin{align*}
    \dive_{M}X^T&=\dive_{M} (X^{T}+X^{\perp})=\dive_M(X)=\sum_{i=1}^n\nabla_{e_i}X \cdot e_i \\
    &\geq n-nC_0|y-x|\geq n-nC_0d_M(p,q)
\end{align*}
Furthermore, we have by comparison up to second order, for any \(q\in B_{r}^M(p)\),
\[
|X(q)^T|_g\leq |X(q)|\leq |y-x|(1+C_0|y-x|)\leq d_M(p,q)(1+C_0 d_M(p,q)).
\]
We can now apply divergence theorem on \(B_r^M(p)\) to obtain,
\begin{align*}
   n(1-C_0d_{M}(p,q)) \cdot \Vol(B^M_r(p))&\leq \int_{B^M_r(p)} \dive_M X^T\\
   &=\int_{\p B^M_r(p)\cap M^\circ} X^T\cdot \p r+\int_{B^M_r(p)\cap \p M} X^T\cdot \nu_{\p X}\\
   &=\int_{\p B^M_r(p)} X^T\cdot \p r\\
   &\leq r(1+C_0r)\Vol(\p B^M_r(p)),
\end{align*}
where in the last equality we used the free boundary condition.

For almost every \(r<r_0\), \(\frac{d}{dr}\Vol(B^M_r(p))=\Vol(\p B^M_{r}(p))\),
we now denote the absolute continuous function \(V(r)=\Vol(B^M_r(p))\), then for \(r_0\) small enough we have there is \(c_0,\delta>0\) such that for almost every \(r<r_0\),
\[
\frac{d}{dr}\log (V(r))\geq \frac{n}{r}-c_0>\delta>0.
\]
This implies \(\frac{d}{dr} \log(e^{c_0r}\frac{V(r)}{r^n})\geq 0\). We know that as \(r\rightarrow 0\), \(\log(e^{c_0r}\frac{V(r)}{r^n}) \rightarrow \log \frac{w_n}{2}\) for \(w_n\) the volume of unit ball in \(\R^n\).
Integrating over \(r\) we have \(V(r)\geq \frac{w_n}{2e^{c_0r_0}}r^n\) for all \(r<r_0\).

For points a fixed intrinsic distance away from $\partial M$, the
intrinsic-ball estimate of Frensel \cite[Lemma 1]{Cao-Shen-Zhu-infinitevolume},
 gives constants $r>0$ and
$c_{\mathrm{int}}>0$ such that
\[
        \operatorname{Vol}\bigl(B_M(p,r)\bigr)
        \geq c_{\mathrm{int}}r^n
        \qquad\text{if }d_M(p,\partial M)\geq r.
        \tag{2.8}
\]
Above calculation gives
\[
        \operatorname{Vol}\bigl(B_M(q,r)\bigr)
        \geq c_{\partial}r^n
        \qquad\text{for every }q\in\partial M.
        \tag{2.9}
\]

Set $\rho_0=2r$.  If $d_M(p,\partial M)\geq r$, then (2.8) implies
\[
        \operatorname{Vol}\bigl(B_M(p,\rho_0)\bigr)
        \geq c_{\mathrm{int}}r^n.
\]
If $d_M(p,\partial M)<r$, choose $q\in\partial M$ with
$d_M(p,q)<r$.  For every $x\in B_M(q,r)$, the triangle inequality gives
\[
        d_M(p,x)\leq d_M(p,q)+d_M(q,x)<2r=\rho_0.
\]
Hence $B_M(q,r)\subset B_M(p,\rho_0)$, and (2.9) yields
\[
        \operatorname{Vol}\bigl(B_M(p,\rho_0)\bigr)
        \geq c_{\partial}r^n.
\]
We have therefore proved that
\[
        \operatorname{Vol}\bigl(B_M(p,\rho_0)\bigr)\geq c_0>0
        \qquad\text{for every }p\in M.
        \tag{2.10}
\]

It remains to prove that every end has infinite volume. Let $E$ be
an arbitrary end of $M$. Note that $\partial_1E$ is compact. Since
$E$ is unbounded, we can choose inductively a sequence
$\{p_j\}_{j=1}^{\infty}\subset E$ such that
\[
d_M(p_j,\partial_1E)>2\rho_0, \text{ and }
d_M(p_i,p_j)>3\rho_0
\qquad\text{whenever }i\neq j.
\]

Indeed, suppose that $p_1,\ldots,p_j$ have already been chosen.
The set
\[
\bigl\{x\in M:d_M(x,\partial_1E)\leq2\rho_0\bigr\}
\cup
\bigcup_{i=1}^{j}\overline{B_M(p_i,3\rho_0)}
\]
is compact. Since $E$ is unbounded, it is not contained in this
set. We may therefore choose $p_{j+1}\in E$ outside it. This
completes the induction.

For every $j$, the condition \(d_M(p_j,\partial_1E)>2\rho_0\)
implies that
\(
B_M(p_j,\rho_0)\subset E.
\)
Moreover, the balls $B_M(p_j,\rho_0)$ are pairwise disjoint. 
Hence,
\[
\operatorname{Vol}(E)
\geq
\sum_{j=1}^{\infty}
\operatorname{Vol}\bigl(B_M(p_j,\rho_0)\bigr)
\geq
\sum_{j=1}^{\infty}c_0
=
\infty.
\]
As a consequence,
$
\operatorname{Vol}(M)=\infty.
$
\end{proof}

Thus in summary, we have proved the following theorem.
\begin{theorem}\label{OneEnd}
    If \((M^n,\p M) \rightarrow (X^{n+1},\p X)\) is a complete two-sided stable minimal free boundary immersion and \(X\) is a bounded convex domain in \(\R^{n+1}\), then
    \begin{itemize}
        \item \(M\) is compact if and only if \(\p M\) is compact;
        \item if \(M\) is non-compact, then it has only one end, which is a nonparabolic boundary end and satisfies the property \((\sharp)\).
    \end{itemize}
    In particular, this  applies for  $X=\mathbb{B}^{n+1}.$
\end{theorem}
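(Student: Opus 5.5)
The plan is to assemble the results of this section. The only curvature input is on $X$, and it must be checked against the hypotheses of the lemmas above. A bounded convex domain $X\subset\R^{n+1}$ is flat, so $\Ric^X_{n-1}\equiv0$ and $\Ric_X\equiv0$, and convexity gives $A^{\p X}\geq0$, hence $A^{\p X}_2\geq0$. Thus Theorem \ref{one nonparabolic end} and Corollaries \ref{thm:no-compact-boundary}, \ref{lem:boundary-meets-end} apply directly. Theorem \ref{thm:infinite-volume} applies since $X$ is a smooth bounded domain, so every end of a noncompact $M$ has infinite volume.

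\emph{Gap in the boundary-end lemma.} The nonparabolicity lemma for boundary ends needs $\sff_{\p X}\geq1$. I would read ``convex'' as uniformly convex, as the paper implicitly does, i.e.\ $\sff_{\p X}\geq\kappa>0$; this is the case for $X=\mathbb B^{n+1}$. Rescaling $X$ by $\kappa$ keeps stability, the free boundary condition and $\Ric_X=0$, and gives $\sff_{\p X}\geq1$. The lemma then shows every boundary end is nonparabolic. The lemma of Cao--Shen--Zhu shows every interior end is nonparabolic; it uses only the Michael--Simon inequality and $n\geq3$.

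\emph{Parabolic case.} Independently of uniform convexity, if $M$ were parabolic I would take cutoffs $\varphi_k\to1$ with $\int|\nabla\varphi_k|^2\to0$. Then stability forces $A\equiv0$. Hence $M$ is a piece of a hyperplane inside the bounded set $X$, which has finite volume and contradicts Theorem \ref{thm:infinite-volume}. So a noncompact $M$ always has at least one nonparabolic end.

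\emph{First bullet.} If $M$ is compact then $\p M$ is compact. Conversely, suppose $\p M$ is compact but $M$ is noncompact. Then $M$ has an end, which is nonparabolic by the above, and it has infinite volume. Corollary \ref{thm:no-compact-boundary} then says every component of $\p M$ is noncompact. A compact nonempty $\p M$ is therefore impossible. If $\p M=\varnothing$, the end is interior, and a complete minimal hypersurface without boundary lies in the bounded set $X$; I expect to rule this out by showing, as in the proof of Theorem \ref{thm:infinite-volume} (intrinsic balls of radius $\rho_0$ have volume bounded below), that each end has infinite volume. The main issue is the uniqueness step below; the boundaryless case should be routine.

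\emph{Second bullet.} Every end is nonparabolic, and Theorem \ref{one nonparabolic end} allows at most one nonparabolic end. Hence $M$ has exactly one end $E$. Since $M$ is noncompact, the first bullet shows $\p M\neq\varnothing$ and noncompact. Corollary \ref{lem:boundary-meets-end} then shows every component of $\p M$ meets $E_k$ for all $k$. This is property $(\sharp)$, and it makes $E$ a boundary end. The statement for $X=\mathbb B^{n+1}$ follows since $\sff_{\p\mathbb B}=g_{\p\mathbb B}\geq1$ already holds without rescaling.

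\emph{Main obstacle.} The hard step is ruling out extra \emph{parabolic boundary} ends when $\p X$ is only weakly convex, where the lemma is unavailable. If uniform convexity is not assumed, I would first try this. Given the unique nonparabolic end $E$ and a parabolic boundary end $F$, use capacity-zero cutoffs supported on $F$ in the stability inequality. This should give $\int_F|A|^2<\infty$ and $\int_{\p M\cap F}A^{\p X}(\nu,\nu)<\infty$. I would then combine this with the infinite volume of $F$ and the Neumann harmonic-function construction of Proposition \ref{thm:wu-arbitrary-dim} to reach a contradiction. I am not certain this works, and I would state the theorem for uniformly convex $X$ if it does not.
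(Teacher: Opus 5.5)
Your plan is the paper's own. The paper proves this theorem only by summarizing Section 2:
\begin{itemize}
\item interior ends are nonparabolic by Cao--Shen--Zhu, and boundary ends by the stability lemma;
\item Theorem \ref{thm:infinite-volume} gives infinite volume, so Theorem \ref{one nonparabolic end} leaves at most one nonparabolic end;
\item Corollaries \ref{thm:no-compact-boundary} and \ref{lem:boundary-meets-end} give the first bullet and $(\sharp)$.
\end{itemize}
Your remark that the boundary-end lemma needs a positive lower bound on $\sff_{\p X}$ is correct, since ``convex'' does not supply one; the paper uses this hypothesis tacitly. For a smooth, bounded, strictly convex domain, compactness of $\p X$ gives $\sff_{\p X}\geq\kappa>0$. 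The lemma's proof then runs verbatim with $\kappa$ in place of $1$, so no rescaling is needed. Restricting the statement this way is the right fix and still covers $\mathbb B^{n+1}$. Your speculative argument for weakly convex $\p X$ is not needed anywhere.

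The one step that fails as you describe it is the case $\p M=\varnothing$. Infinite volume of the end gives no contradiction there. Every complete bounded minimal immersion has infinite volume; this is exactly what the non-collapsing estimate in Theorem \ref{thm:infinite-volume} proves. Without stability, complete bounded minimal surfaces do exist (Nadirashvili). What rules this case out is stability used in full: for $3\leq n\leq 5$, a complete stable minimal hypersurface without boundary in $\R^{n+1}$ is a hyperplane by the stable Bernstein theorem cited in the paper, and a hyperplane cannot lie in the bounded set $X$. For general $n$ the paper does not treat this case. It tacitly assumes $\p M\neq\varnothing$, since otherwise the conclusion ``the end is a boundary end'' would be false. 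So you should either add that hypothesis or invoke stable Bernstein.

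Finally, your parabolic-case paragraph is superfluous once every end is known to be nonparabolic. If you keep it, the claim that a totally geodesic $M$ in bounded $X$ has finite volume needs an argument, because the immersion is not a priori finite-to-one. In the strictly convex case it is simpler to note that stability also forces $\int_{\p M}\sff_{\p X}(\nu,\nu)=0$. Hence $\p M=\varnothing$, so $M$ covers an entire hyperplane, which is unbounded.
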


\section{Gromov-Lawson Conformal Metric and Weighted BiRicci Curvature}

\subsection{The conformal stability inequality}
We now pass from the Euclidean ball to the Gulliver--Lawson cylindrical metric. We denote \(|\cdot|_g\) to be the Euclidean norm of a vector in \(\B^{n+1}\),
set
\[
        r(p)=|F(p)|,\qquad t=-\log r\in [0,\infty) ,
\]
then we define the following metric ,
\[
        N:=M\setminus F^{-1}(0), \quad \tilde g=r^{-2}g.
\]
It is not hard to see that $(N,\tilde g)$ is complete. Indeed, since $0<r\leq1$,
we have
$
\tilde g=r^{-2}g\geq g.
$
Hence, if a curve $\gamma:[0,1)\to N$ leaves every compact subset
of $M$, then
\[
L_{\tilde g}(\gamma)\geq L_g(\gamma)=\infty
\]
because $(M,g)$ is complete. It remains to consider a curve $\gamma:[0,1)\to N$ which remains
in a compact subset of $M$ but leaves every compact subset of $N$.
Such a curve must approach $F^{-1}(0)$. Thus, after passing to a
sequence if necessary, we have
\[
r(\gamma(t))\to0
\qquad\text{as }t\to1.
\]
For every $T<1$, we have
\begin{align*}
L_{\tilde g}(\gamma|_{[0,T]})
&=
\int_0^T\frac{|\gamma'(t)|_g}{r(\gamma(t))}\,dt\\
&\geq
\int_0^T
\left|
\frac{d}{dt}\log r(\gamma(t))
\right|\,dt\\
&\geq
\left|
\log r(\gamma(T))-\log r(\gamma(0))
\right|.
\end{align*}
Since $r(\gamma(T))\to0$, the right-hand side tends to infinity.
Therefore $L_{\tilde g}(\gamma)=\infty$.
Thus every divergent curve in $N$ has infinite $\tilde g$-length,
and hence $(N,\tilde g)$ is complete.

The ambient punctured ball with metric $r^{-2}g_{\mathrm{Euc}}$ is the half
cylinder
\[
        (\B^{n+1}\setminus\{0\},r^{-2}g_{\mathrm{Euc}})
        \cong
        (\Sph^n\times[0,\infty),g_{\Sph^n}+dt^2)=:(X,\partial X).
\]
The spherical boundary $\Sph^n=\{r=1\}$ becomes the totally geodesic boundary slice $\{t=0\}$ of \(X\).
Since conformal maps preserve angles, the free-boundary condition is unchanged, i.e.
$N$ meets $\partial X$ orthogonally, and \(\p N\) is totally geodesic in \((N,\tilde{g})\).

Since \(F:(M^n,g) \rightarrow \B^{n+1}\) is a complete smooth immersion, the preimage \(F^{-1}(0)\) cannot have any accumulation point in the induced metric \((M,g)\), therefore \(F^{-1}(0)\) is a discrete set in \((M,g)\). In fact, if $n\leq 4$, we have a better situation since $F$ does not pass through the origin by Corollary \ref{cor:strict-inradius-unit-ball}.


We first compute the stability inequality under conformal change.

\begin{proposition}\label{prop:conf-stab-boundary}
Let $M^n\to\overline{\B^{n+1}}$ be a two-sided stable free-boundary minimal
immersion, and set $\tilde g=r^{-2}g$ on $N=M\setminus F^{-1}(0)$.  Then for all
smooth $\psi$ with compact support in $N$ as a manifold with boundary,
\begin{align}
        \int_N |\tilde\nabla\psi|^2\,d\tilde\mu
        &\geq
        \int_N
        \left(
        r^2|A|^2-\frac{n(n-2)}2
        +\frac{n^2-4}{4}|dr|^2
        \right)\psi^2\,d\tilde\mu  \notag \\
        &\qquad
        + \frac n2\int_{\partial N}\psi^2\,d\tilde\sigma .
        \label{eq:conf-stab-boundary}
\end{align}
\end{proposition}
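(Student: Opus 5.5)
The plan is to substitute $\varphi=r^{-\frac{n-2}{2}}\psi$ into the free boundary stability inequality \eqref{eq:fb-stability}, and then rewrite every term in terms of $\tilde g$.

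\textbf{Admissibility and conformal dictionary.} Since $\psi$ has compact support in $N$, it vanishes near the discrete set $F^{-1}(0)$. Hence $\varphi$ is a legitimate compactly supported test function on $M$. I will use the conformal relations for $\tilde g=r^{-2}g$:
\[
d\tilde\mu=r^{-n}d\mu,\qquad d\tilde\sigma=r^{1-n}d\sigma,\qquad |\tilde\nabla\psi|^2_{\tilde g}=r^2|\nabla\psi|^2 .
\]
From these, $r^{2-n}|\nabla\psi|^2\,d\mu=|\tilde\nabla\psi|^2\,d\tilde\mu$ and $r^{2-n}|A|^2\psi^2\,d\mu=r^2|A|^2\psi^2\,d\tilde\mu$. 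Here $|dr|^2$ denotes $|\nabla r|_g^2=|d\log r|^2_{\tilde g}$. On $\partial M$ we have $r=1$, so the boundary term $\int_{\partial M}\varphi^2\,d\sigma$ equals $\int_{\partial N}\psi^2\,d\tilde\sigma$.

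\textbf{Expanding the gradient.} Set $a=-\frac{n-2}{2}$. Then
\[
|\nabla\varphi|^2=r^{2-n}|\nabla\psi|^2+\tfrac12\langle\nabla\psi^2,\nabla r^{2-n}\rangle+a^2r^{-n}|\nabla r|^2\psi^2 .
\]
I will integrate the cross term by parts:
\[
\tfrac12\int_M\langle\nabla\psi^2,\nabla r^{2-n}\rangle=-\tfrac12\int_M\psi^2\Delta r^{2-n}+\tfrac12\int_{\partial M}\psi^2\,\partial_\nu r^{2-n}.
\]

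\textbf{Two inputs from minimality and the free boundary condition.} First, minimality gives $\Delta r^2=2n$. Writing $r^{2-n}=(r^2)^{(2-n)/2}$ and using the chain rule, this yields
\[
\Delta r^{2-n}=n(2-n)\,r^{-n}\bigl(1-|\nabla r|^2\bigr).
\]
Second, the free boundary condition says the outward conormal of $\partial M$ is the position vector $F$. Hence $\partial_\nu r=1$ on $\partial M$, and so $\partial_\nu r^{2-n}=2-n$ there.

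\textbf{Collecting terms.} Substituting into \eqref{eq:fb-stability} and moving the extra terms to the right gives an interior constant $-\frac{n(n-2)}{2}$. The coefficient of $|dr|^2$ becomes
\[
\tfrac{n(n-2)}{2}-\tfrac{(n-2)^2}{4}=\tfrac{n^2-4}{4}.
\]
The boundary coefficient becomes $1+\frac{n-2}{2}=\frac n2$. This is exactly \eqref{eq:conf-stab-boundary}.

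\textbf{Main obstacle.} The only delicate points are the sign bookkeeping in the integration by parts and the computation of $\partial_\nu r$ via the free boundary (orthogonality) condition. That boundary contribution is precisely what upgrades the coefficient $1$ to $\frac n2$.
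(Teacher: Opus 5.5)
Your proposal is correct and follows the paper's proof. You use the same substitution $\varphi=r^{(2-n)/2}\psi$, with minimality supplying the Laplacian identity for $r$ and the free boundary condition giving $\partial_\nu r=1$ on $\partial M$, which upgrades the boundary coefficient to $\frac n2$. The only cosmetic difference is that you integrate the cross term by parts in $g$ via $\Delta r^{2-n}=n(2-n)r^{-n}(1-|\nabla r|^2)$, whereas the paper first rewrites everything in $\tilde g$ and uses $\tilde\Delta\log r=n-n|dr|^2$; the two computations are equivalent.
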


\begin{proof}
We start from \eqref{eq:fb-stability} and use the usual Gulliver--Lawson conformal weight,
\[
        \varphi=r^{(2-n)/2}\psi.
\]
Since $d\mu=r^n\,d\tilde\mu$, $|\nabla f|^2=r^{-2}|\tilde\nabla f|^2$, and
$r=1$ along $\partial M$, the stability inequality becomes
\begin{align}\label{eq:stabI-1}
    \int_N r^{n-2}|\tilde\nabla\varphi|^2\,d\tilde\mu
        \geq
        \int_N r^n|A|^2\varphi^2\,d\tilde\mu
        +\int_{\partial N}\varphi^2\,d\tilde\sigma .
\end{align}
First,
\[
        \tilde\nabla\varphi
        =
        r^{(2-n)/2}
        \left(
        \tilde\nabla\psi-\frac{n-2}{2}\psi\,\tilde\nabla\log r
        \right).
\]
Thus expanding $\tilde\nabla\varphi$ gives
\begin{align*}
        r^{n-2}|\tilde\nabla\varphi|^2
        &=
        |\tilde\nabla\psi|^2
        +\frac{(n-2)^2}{4}
        |\tilde\nabla\log r|_{\tilde g}^2\psi^2  
        -\frac{n-2}{2} \langle\tilde\nabla(\psi^2),\tilde\nabla\log r\rangle .
\end{align*}
The minimality of $M$ and the
standard conformal Laplacian formula gives,
\[
        \tilde\Delta\log r=n-n|dr|^2 .
\]
Thus the cross term satisfies
\[
        -\frac{n-2}{2}\int_N
        \langle\tilde\nabla(\psi^2),\tilde\nabla\log r\rangle\,d\tilde\mu
        =
        \frac{n-2}{2}\int_N
        (n-n|dr|^2)\psi^2\,d\tilde\mu
        -\frac{n-2}{2}\int_{\partial N}\psi^2\,d\tilde\sigma ,
\]
Here we used
$|\tilde\nabla\log r|_{\tilde g}^2=|dr|_g^2$, 
and the outward $\tilde g$-unit conormal is again $\eta$ on $\partial N$
and $\partial_\eta \log r=1$.

Now plug these computations into (\ref{eq:stabI-1}) and we obtained 
\eqref{eq:conf-stab-boundary}.  
\end{proof}

\subsection{Spectral Weighted BiRicci Curvature}
For an ordered orthonormal pair $(e_1,e_2) \in T_pM$ at a point \(p\) in a Riemannian $n$-manifold \(N\), set
\begin{equation}\label{eq:alpha-biric-definition}
 \biRic_\alpha(e_1,e_2)
 =
 \sum_{i=2}^n R(e_1,e_i,e_i,e_1)
 +\alpha\sum_{j=3}^n R(e_2,e_j,e_j,e_2),
\end{equation}
and let
\[
        \Lambda_\alpha(p)
        =
        \min_{(e_1,e_2)}\biRic_\alpha(e_1,e_2),
\]
where the minimum is taken among all orthonormal pairs in \(T_p N\). When \(\alpha=1\), this is the biRicci curvature introduced in \cite{shenyingyerugang}.
Note that \(\Lambda_{\alpha}\) is a Lipschitz function on \(N\).

We have the following spectral $\alpha$-bi-Ricci inequalities with Robin Boundary condition.
\begin{proposition}\label{prop:spectral-biric-boundary}
Let \((M^4,\p M)\rightarrow (\B^5,\Sph^4)\) be a complete two-sided stable  free boundary minimal hypersurface, and  let \((N^4,\tilde{g})\) be the Gromov--Lawson conformal metric applied to \(M\setminus F^{-1}(0)\).
There is a smooth function $V$ on $N$ satisfying
\[
        V\geq 1-\tilde{\Lambda}_1
\]
such that
\begin{equation}\label{eq:biric-robin}
        \int_N |\tilde\nabla\psi|^2\,d\tilde\mu
        \geq
        \int_N V\psi^2\,d\tilde\mu
        +2\int_{\partial N}\psi^2\,d\tilde\sigma
\end{equation}
for every smooth $\psi$ with compact support in $N$ as a manifold with
boundary.
\end{proposition}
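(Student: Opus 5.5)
The plan is to take $V$ to be exactly the potential produced by Proposition \ref{prop:conf-stab-boundary} in the case $n=4$, namely
\[
V:=r^2|A|^2-4+3|dr|^2 .
\]
With this choice, \eqref{eq:biric-robin} is literally \eqref{eq:conf-stab-boundary} with $n=4$, since the boundary coefficient there is $\tfrac n2=2$. So the whole content of the proposition is the pointwise algebraic inequality
\[
r^2|A|^2-4+3|dr|^2+\tilde\Lambda_1\geq 1
\qquad\text{on }N,
\]
where $\tilde\Lambda_1$ is the minimum of the bi-Ricci curvature of $(N,\tilde g)$. If $V$ is only Lipschitz because of the minimum, I would not smooth it: $V$ itself is smooth (it involves $|A|^2$ and $|dr|^2$ only), and only the comparison with $\tilde\Lambda_1$ involves the minimum.

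To prove the pointwise inequality, I would compute $\tilde\Lambda_1$ via the Gauss equation for the immersion $N\to X=\Sph^4\times\R$ with the product metric. I expect this to be the same computation as in \cite{Chodosh-Li-Minter-Stryker}, where the stable Bernstein problem in $\R^5$ is handled through the same conformal cylinder.

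\textbf{Ambient curvature.} For $\tilde g$-orthonormal vectors $a,b$, the sectional curvature of the cylinder is $K_X(a,b)=|a^{\Sph}\wedge b^{\Sph}|^2$. In terms of the $\partial_t$-components this becomes $1-\langle a,\partial_t\rangle^2-\langle b,\partial_t\rangle^2$. The tangential part of $\partial_t$ along $N$ is $-\tilde\nabla\log r$, whose $\tilde g$-length is $|dr|_g$; this is how $|dr|^2$ enters.

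\textbf{Second fundamental form.} Under $\tilde g=r^{-2}g$ with normal $\tilde\nu=r\nu$, the second fundamental form in a $\tilde g$-orthonormal frame is
\[
\tilde h_{ij}=r h_{ij}+\langle x,\nu\rangle r^{-1}\delta_{ij}.
\]
Hence $\tilde h=\mathring{\tilde h}+\tfrac{\tilde H}{4}\tilde g$, with trace-free part $r h$ (so $|\mathring{\tilde h}|^2=r^2|A|^2$) and $\tilde H=4\langle x,\nu\rangle/r$. Also $\langle x,\nu\rangle^2/r^2=1-|dr|^2$, which is the $\partial_t$-component of $\tilde\nu$.

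\textbf{Assembling the bi-Ricci sum.} I would insert these into
\[
\biRic(e_1,e_2)=\sum_{i\ge2}\bigl(K_X(e_1,e_i)+\tilde h_{11}\tilde h_{ii}-\tilde h_{1i}^2\bigr)+\sum_{j\ge3}\bigl(K_X(e_2,e_j)+\tilde h_{22}\tilde h_{jj}-\tilde h_{2j}^2\bigr).
\]
The ambient part sums to a constant minus a weighted combination of the squared $\partial_t$-components of $e_1,e_2$ and $\tilde\nu$. That combination is controlled using $\sum_{i}\langle e_i,\partial_t\rangle^2+\langle\tilde\nu,\partial_t\rangle^2=1$. The extrinsic part, as in Shen--Ye and \cite{Chodosh-Li-Minter-Stryker}, is bounded below by $-c_4|\mathring{\tilde h}|^2$ plus a nonnegative multiple of $\tilde H^2$, using the algebraic lemma that bounds $\tilde h_{11}\sum\tilde h_{ii}-\sum\tilde h_{1i}^2+\dots$ in dimension $4$. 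The key requirement is that $c_4\le1$, so that $r^2|A|^2-c_4r^2|A|^2\ge0$.

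\textbf{Main obstacle.} The hard step is this last algebraic optimization. One has to check that, for $n=4$, the coefficient of $|\mathring{\tilde h}|^2$ in the lower bound for $\biRic$ is at least $-1$. At the same time, the $\tilde H^2$ contribution and the ambient contribution, written in terms of $|dr|^2$, must combine with $-4+3|dr|^2$ to give at least $1$; I expect the worst case to be $\tilde\nu=\partial_t$, i.e.\ $|dr|^2=0$. My first attempt would be to quote the corresponding lemma of \cite{Chodosh-Li-Minter-Stryker} verbatim. If the constants do not close there, I would redo the minimization by diagonalizing $\mathring{\tilde h}$ and treating $e_1,e_2$ as the worst eigen-directions.
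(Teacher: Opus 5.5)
Your proposal is correct and is essentially the paper's proof: $V$ is the potential from Proposition~\ref{prop:conf-stab-boundary} with $n=4$ (boundary coefficient $n/2=2$), and the pointwise bound $V\geq 1-\tilde\Lambda_1$ is the local Gauss-equation computation of \cite[Proposition 3.1]{Chodosh-Li-Minter-Stryker}, which the paper simply quotes. One small correction to your expectation: your split works, with coefficient $c_4=1$ on $|\mathring{\tilde h}|^2$ and $\tfrac14$ on $\tilde H^2$, and it gives $V+\biRic(e_1,e_2)-1\geq 4(1-|dr|^2)\geq 0$, so the sharp case is $|dr|^2=1$, $\tilde H=0$ (e.g.\ a hyperplane through the origin), not $\tilde\nu=\partial_t$.
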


\begin{proof}
We note that by Proposition \ref{prop:conf-stab-boundary}, it suffices to prove the required pointwise lower bound for $V=r^2|A|^2-\frac{n(n-2)}{2}+\frac{n^2-4}{4}|dr|^2$ when $n=4$.  This is a local computation
identical to the closed case in \cite{Chodosh-Li-Minter-Stryker}. We directly take the following inequality which was proved in \cite[Proposition 3.1]{Chodosh-Li-Minter-Stryker}
\[
        V
        \geq 1-\tilde\Lambda_{1}.
\]
 Applying Proposition
\ref{prop:conf-stab-boundary} with $n=4$ gives
\[
        \int_N |\tilde\nabla\psi|^2\,d\tilde\mu
        \geq
        \int_N V\psi^2\,d\tilde\mu
        +2\int_{\partial N}\psi^2\,d\tilde\sigma
\]
for every smooth $\psi$ with compact support in $N$ as a manifold with
boundary. 
\end{proof}

\begin{proposition}
\label{prop:weighted-biric-dim-five}
Let \((M^5,\p M)\rightarrow (\B^6,\Sph^5)\) be a complete two-sided stable  free boundary minimal hypersurface, and let \((N^5,\tilde{g})\) be the Gromov--Lawson conformal metric applied to \(M\setminus F^{-1}(0)\).  For
\begin{equation}\label{eq:mazet-parameters}
        a=\frac{11}{10},
        \qquad
        \alpha=\frac{40}{43},
        \qquad
        \delta=\frac3{10},
\end{equation}
there is a smooth function $V$ on $N$ such that
\begin{equation}\label{eq:weighted-biric-lower-bound}
        V\geq\delta-\widetilde\Lambda_\alpha
\end{equation}
and
\begin{equation}\label{eq:weighted-biric-robin}
 \int_N|\tilde\nabla\varphi|^2\,d\tilde\mu
 \geq
 \frac1a\int_N V\varphi^2\,d\tilde\mu
 +\frac52\int_{\partial N}\varphi^2\,d\tilde\sigma
\end{equation}
for every smooth compactly supported $\varphi$ on $N$ as a manifold with
boundary.
\end{proposition}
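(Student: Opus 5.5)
Starting from Proposition \ref{prop:conf-stab-boundary} with $n=5$, we have for every compactly supported $\varphi$ on $N$
\[
\int_N|\tilde\nabla\varphi|^2\,d\tilde\mu\geq\int_N\Bigl(r^2|A|^2-\tfrac{15}{2}+\tfrac{21}{4}|dr|^2\Bigr)\varphi^2\,d\tilde\mu+\tfrac52\int_{\partial N}\varphi^2\,d\tilde\sigma .
\]
The boundary coefficient $\frac n2=\frac52$ is already the one required in \eqref{eq:weighted-biric-robin}. The natural choice is therefore
\[
V:=a\Bigl(r^2|A|^2-\tfrac{15}{2}+\tfrac{21}{4}|dr|^2\Bigr),\qquad a=\tfrac{11}{10},
\]
which is smooth on $N$. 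With this $V$, \eqref{eq:weighted-biric-robin} is just the displayed inequality rewritten as $\frac1a\int_N V\varphi^2$. All the content is then in the pointwise bound \eqref{eq:weighted-biric-lower-bound}, which is purely local and does not see the boundary. The free boundary condition enters only through the boundary term already accounted for in Proposition \ref{prop:conf-stab-boundary}.

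For the pointwise bound, I would follow the closed-case computation in \cite{mazet}, which is the stable Bernstein argument in $\R^6$ using the same Gulliver--Lawson conformal change.
\begin{enumerate}
\item Compute the curvature of $\tilde g=r^{-2}g$ via the conformal change formula $\widetilde{\mathrm{Rm}}=r^{-2}\bigl(\mathrm{Rm}+(\text{Hess}\log r\text{ terms})\wedge g\bigr)$. Use the Gauss equation for $M\subset\R^6$, which gives $\mathrm{Rm}_M$ in terms of $h_{ij}$, together with $\text{Hess}_M\, r^2/2=g-\langle F,\nu\rangle A$.
\item Express $\widetilde{\BiRic}_\alpha(e_1,e_2)$ for a $\tilde g$-orthonormal pair $(e_1,e_2)$ in the form
\[
\widetilde{\BiRic}_\alpha(e_1,e_2)=c_n+(\text{quadratic form in }r^2h_{ij})+(\text{terms in }|dr|^2,\ \langle\nabla r,e_1\rangle^2,\ \langle\nabla r,e_2\rangle^2,\ \langle F,\nu\rangle h_{11},\ \dots).
\]
\item Show that for these specific parameters
\[
a\Bigl(r^2|A|^2-\tfrac{15}{2}+\tfrac{21}{4}|dr|^2\Bigr)+\widetilde{\BiRic}_\alpha(e_1,e_2)\geq\delta
\]
for every orthonormal pair. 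Minimizing over pairs then yields $V\geq\delta-\widetilde\Lambda_\alpha$.
\end{enumerate}

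This algebraic inequality is exactly Mazet's key lemma for $n=5$ with $(a,\alpha,\delta)=(\frac{11}{10},\frac{40}{43},\frac3{10})$. Since the computation is local and identical to the boundaryless case, I would quote it directly, as was done for $n=4$ with \cite[Proposition 3.1]{Chodosh-Li-Minter-Stryker} in Proposition \ref{prop:spectral-biric-boundary}. The point to verify is that Mazet's weighted quantity matches ours: his spectral term should be exactly $r^2|A|^2-\frac{n(n-2)}2+\frac{n^2-4}{4}|dr|^2$ multiplied by the same $a$, so that no normalization is lost.

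The main obstacle is this last algebraic step, if one insists on checking it rather than citing it. After diagonalizing $A$ (or at least handling the $e_1,e_2$ rows), it requires minimizing a quadratic form in the $h_{ij}$ subject to $\sum h_{ii}=0$, jointly with the gradient variables of $r$. One would need to show the minimum is at least $\delta$ using $|dr|\leq1$, and the margin $\frac{3}{10}$ depends delicately on $\alpha=\frac{40}{43}$. I would carry it out by splitting $A$ into the block on $\operatorname{span}(e_1,e_2)$ and its complement, applying the tracefree estimate \eqref{eq:tracefree-row-estimate}-type bounds to the off-block entries, and then optimizing the remaining finitely many scalar parameters.
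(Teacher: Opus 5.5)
Your proposal is correct and is essentially the paper's proof. The paper also sets $V=a\bigl(r^2|A|^2-\frac{15}{2}+\frac{21}{4}|dr|^2\bigr)$ and reads off \eqref{eq:weighted-biric-robin}, with boundary coefficient $\frac n2=\frac52$, directly from Proposition~\ref{prop:conf-stab-boundary}; it then quotes the purely local pointwise bound $V\geq\delta-\widetilde\Lambda_\alpha$ from Mazet's proof of his Theorem 3.1 \cite{mazet}, just as the $n=4$ case quotes \cite{Chodosh-Li-Minter-Stryker}, and it does not carry out the algebraic verification you sketch.
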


\begin{proof}
As in the previous Proposition, this result also follows from Proposition \ref{prop:conf-stab-boundary} if we estimate $V=a(r^2|A|^2-\frac{n(n-2)}{2}+\frac{n^2-4}{4}|dr|^2)$ when $n=5$. This was done in
\cite[Section 3, proof of Theorem 3.1]{mazet}, that is
\[V\geq \delta-\tilde{\Lambda}_\alpha.\]
Finally, Proposition \ref{prop:conf-stab-boundary} gives
\[
 \int_N|\tilde\nabla\varphi|^2
 \geq
 \frac1a\int_NV\varphi^2+\frac52\int_{\partial N}\varphi^2.
\]
\end{proof}

\section{Volume Estimates of Free Boundary \(\mu\)-Bubbles}
\subsection{Free boundary \(\mu\)-bubbles}\label{FB-Bubble}
We say \((Y^n,\p Y,\tilde{g})\) is a Riemannian band with boundary
if it is a smooth compact manifold with corner, whose boundary \(\p Y\) is decomposed as
\[
        \partial Y=\partial_0Y\cup\partial_-Y\cup\partial_+Y, \quad \p_- Y \cap \p_+ Y =\emptyset
\]
where each \(\p_i Y, i\in \{0,+,-\}\) is a smooth \(n-1\) manifold with boundary, and \(\p_{\pm} Y\) meets with \(\p_0 Y\) at an angle $\theta(x)\in (0,\frac{\pi}{2}]$.

Let $h$ be a smooth function on 
$ Y\setminus(\partial_-Y\cup\partial_+Y)$ such that
\[
        h\to+\infty\text{ on }\partial_+Y,
        \qquad
        h\to-\infty\text{ on }\partial_-Y.
\]
Choose a regular value $c_0$ of $h$ and set
\[
        \Omega_0=\{h>c_0\}\supset \partial_+ Y.
\]
Given a smooth positive function \(u\) defined on $Y$, we define the following warped \(\mu\)-bubble functional for Caccioppoli sets $\Omega\subset Y$ such that \(\Omega\triangle\Omega_0 \Subset Y \setminus(\p_-Y\cup \p_+Y)\),
\begin{equation}\label{eq:relative-functional}
        \cA(\Omega)
        =
        \int_{\partial^*\Omega\cap\Int Y}u\,d\cH^{n-1}_{\tilde g}
        -
        \int_Y(\chi_\Omega-\chi_{\Omega_0})hu\,d\tilde\mu .
\end{equation}
Since each Caccioppoli set \(\Omega\) containing a neighborhood of \(\partial_+Y\), \(\cA(\Omega)\) is finite, as we used renormalization by \(\Omega_0\).

We recall the existence and regularity of free boundary \(\mu\)-bubble studied in \cite{chodosh-li-bubble, wuyujie}, and we recall the variation formulas.

\begin{lemma}[{ \cite[Lemma 6.2]{wuyujie}}]\label{lem:relative-mububble}
There is a minimizer $\Omega$ to the functional \eqref{eq:relative-functional} realizing,
\[
A_0:=\inf_{\Omega:\text{ Caccioppoli set, }\Omega\triangle\Omega_0 \Subset Y \setminus(\p_-Y\cup \p_+Y)} \cA(\Omega)\in \R.
\]
Moreover
$\Omega\triangle\Omega_0$ is compactly contained in $Y^\circ\cup\partial_0Y$,
and the reduced boundary
\[
        \Sigma:=\partial^*\Omega\cap (Y^\circ\cup\partial_0Y)
\]
is smooth when \(n\leq 7\); $\Sigma$ meets $\partial_0 Y$ orthogonally along $\partial\Sigma$.
\end{lemma}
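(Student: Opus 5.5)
The plan is to argue by the direct method in the class of Caccioppoli sets $\Omega$ with $\Omega\triangle\Omega_0\Subset Y\setminus(\partial_-Y\cup\partial_+Y)$, following \cite{chodosh-li-bubble} and \cite[Lemma 6.2]{wuyujie}.

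\textbf{Step 1: $A_0$ is finite.} Since $u$ is smooth and positive on the compact $Y$, we have $u\geq c>0$. The volume term only involves $\Omega\triangle\Omega_0$. Choose regular values $c_-<c_0<c_+$ of $h$ with $|h|$ large. On $\{h>c_+\}$, adding volume to $\Omega$ lowers $\cA$ (the term $-hu$ is negative there). The competitor obtained by pushing $\partial\Omega$ toward $\partial_+Y$ trades a perimeter cost against this gain. To obtain a lower bound, I will use a calibration-type comparison with the level sets of $h$. Since $h\to\pm\infty$ at $\partial_\pm Y$, one can arrange (after modifying $h$ near $\partial_\pm Y$ if needed, as in \cite{chodosh-li-bubble}) that $|h|>|\nabla(\log u)|+\|\text{mean curvature of level sets}\|$ near $\partial_\pm Y$. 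A divergence-theorem argument with the vector field $u\nabla h/|\nabla h|$ then shows that replacing $\Omega$ by $(\Omega\cup\{h>c_+\})\cap\{h>c_-\}$ does not increase $\cA$. So we may restrict to sets agreeing with $\Omega_0$ near $\partial_\pm Y$, where $\cA$ is bounded below by $-\sup_{\{c_-\le h\le c_+\}}|hu|\,\mathrm{Vol}(Y)$.

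\textbf{Step 2: existence of a minimizer.} Take a minimizing sequence and apply the truncation from Step 1, so that all sets agree with $\Omega_0$ outside a fixed compact $K\subset Y^\circ\cup\partial_0Y$. Perimeter relative to $\operatorname{Int}Y$, weighted by $u$, is then uniformly bounded. BV compactness gives an $L^1$ limit, perimeter is lower semicontinuous, and the volume term is continuous under $L^1$ convergence on $K$. The limit is therefore a minimizer, and $\Omega\triangle\Omega_0\subset K$, which gives the compact containment claim. Perimeter is measured only in the interior, so $\partial_0Y$ imposes no constraint; this is the free boundary setting.

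\textbf{Step 3: regularity and the contact angle.} In the interior, $\Omega$ is an almost-minimizer of weighted perimeter, since the bulk term is a bounded perturbation $O(r^n)$. Standard regularity then gives a smooth hypersurface away from a singular set of dimension at most $n-8$, which is empty for $n\le7$. Near $\partial_0Y$, $\Omega$ is an almost-minimizer of relative perimeter in a domain with smooth boundary. The boundary regularity of Grüter--Jost for free boundary minimizers, adapted to almost-minimizers and to the smooth weight $u$, gives smoothness up to $\partial_0Y$ with the same dimension bound. The first variation along vector fields tangent to $\partial_0Y$ then yields $H+\langle\nabla\log u,\nu\rangle=h$ in the interior, together with the orthogonality condition along $\partial\Sigma$.

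\textbf{Main obstacle.} The hardest step is the barrier argument of Step 1. It must keep $\Sigma$ away from $\partial_\pm Y$ and also away from the corners where $\partial_\pm Y$ meet $\partial_0Y$. One needs the level sets of $h$ near $\partial_\pm Y$ to meet $\partial_0Y$ at angles compatible with the comparison, so that the divergence argument picks up no bad boundary term on $\partial_0Y$. I would handle this by taking $h$ to depend only on a distance function, whose gradient is tangent to $\partial_0Y$ near the corners, which uses the angle $\theta\in(0,\pi/2]$. If this fails, I would instead rely on the precise construction in \cite{wuyujie}.
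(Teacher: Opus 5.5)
Your overall plan is the one behind \cite[Lemma 6.2]{wuyujie}, which the paper simply cites: truncate by barriers at $\partial_\pm Y$, apply the direct method in $BV$, then use free-boundary regularity and the first variation. Steps 2 and 3 are standard.

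\textbf{The gap is in Step 1, the only non-routine step.} You build the barriers from level sets of the given $h$, calibrated by $u\nabla h/|\nabla h|$. The lemma allows any smooth $h$ with $h\to\pm\infty$ on $\partial_\pm Y$. Nothing then prevents:
\begin{itemize}
\item[(i)] $\nabla h$ from vanishing near $\partial_\pm Y$;
\item[(ii)] the mean curvatures of its level sets from growing faster than $|h|$;
\item[(iii)] these level sets from meeting $\partial_0Y$ at obtuse angles.
\end{itemize}
Your remedies (``modify $h$ near $\partial_\pm Y$'', or take $h$ to be a function of distance) change the functional being minimized, so they are not available.

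\textbf{How to fix the barrier.} Decouple it from $h$. Fix a defining function $\rho_+$ of $\partial_+Y$, for example the distance to it. Require it to be smooth on a collar with $|\tilde\Delta\rho_+|+|\nabla\log u|\le C$ there and $\langle\nabla\rho_+,n_0\rangle\ge 0$ along $\partial_0Y$, where $n_0$ is the outward normal of $Y$. Then shrink to $\tau$ so that $h>C$ on $C_\tau=\{\rho_+<\tau\}$. For $D=C_\tau\setminus\Omega$ and a.e.\ $\tau$, the divergence theorem for $u\nabla\rho_+$ gives
\[
\cA(\Omega\cup C_\tau)-\cA(\Omega)\le\int_D\bigl(\operatorname{div}(u\nabla\rho_+)-hu\bigr)-\int_{\overline D\cap\partial_0Y}u\,\langle\nabla\rho_+,n_0\rangle\le 0 .
\]
The same holds at $\partial_-Y$. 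This uses only $h\to\pm\infty$, not the shape of its level sets.

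\textbf{Role of the angle condition.} The non-obtuse condition enters through the \emph{sign} of $\langle\nabla\rho_+,n_0\rangle$, not through tangency as you claim. On the edge $\partial_+Y\cap\partial_0Y$ one has $\langle\nabla\rho_+,n_0\rangle=\cos\theta$.
\begin{itemize}
\item[(i)] Where $\theta$ stays below $\pi/2$, the gradient is transverse and points \emph{out} of $Y$. This is the favorable sign, and it persists on a thin collar by continuity.
\item[(ii)] Where $\theta=\pi/2$, which the lemma allows, the quantity vanishes on the edge. Its sign just off the edge depends on the geometry of $\partial_0Y$, so an extra argument is needed. In the application, $\partial_0Y\subset\partial N$ is totally geodesic for $\tilde g$. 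Hence the normal geodesics of $\partial_+Y$ issuing from the edge stay in $\partial_0Y$, and $\nabla\rho_+$ is tangent to $\partial_0Y$ there.
\end{itemize}
This is exactly the point you deferred to \cite{wuyujie}. So as written, Step 1 does not establish that $\Omega\triangle\Omega_0$ is compactly contained in $Y^\circ\cup\partial_0Y$.

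\textbf{Finiteness of $A_0$.} This needs no barrier. Since $\Omega_0=\{h>c_0\}$, one has $(\chi_{\Omega_0}-\chi_\Omega)(h-c_0)\ge 0$ pointwise, which gives $\cA(\Omega)\ge -|c_0|\int_Y u$. The barrier is needed only to keep the limit of a minimizing sequence admissible.
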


\begin{theorem}[{\cite[Theorem 6.3]{wuyujie}}]\label{thm:relative-variation}
Let $\Omega$ be a smooth minimizer from Lemma \ref{lem:relative-mububble},
$\nu$ be the outward unit normal of \(\Sigma\) inside $\Omega$.  Then
\begin{equation}\label{eq:relative-first-variation}
H_{\Sigma}=h-\p_{\nu}\log u,
        \qquad\text{on }\Sigma.
\end{equation}

For every admissible normal variation with speed $\phi$, the second variation
satisfies,
\begin{align}
0\leq\delta^2\cA(\phi)
&=
\int_\Sigma u|\nabla^\Sigma\phi|^2
        -u\left(|A_\Sigma|^2+\widetilde{\Ric}(\nu,\nu)\right)\phi^2
        \,d\mu_\Sigma
        \notag\\
&\quad
 +\int_\Sigma
        \phi^2\left(\tilde\Delta u-\Delta^\Sigma u-\partial_\nu(hu)\right)
        \,d\mu_\Sigma
        \notag\\
&\quad
-\int_{\partial\Sigma}
        u\phi^2\,\mathrm{II}_{\partial_0 Y}(\nu,\nu)
        \,d\sigma_{\partial\Sigma}.
\label{eq:relative-second-variation}
\end{align}
Moreover,
\[
        |A_\Sigma|^2+\widetilde{\Ric}(\nu,\nu)
        =
        \frac12\left(\widetilde R-R_\Sigma+|A_\Sigma|^2+H_\Sigma^2\right).
\]
\end{theorem}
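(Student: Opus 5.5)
We derive the first and second variation formulas in Theorem \ref{thm:relative-variation} by computing the variation of $\cA$ along a one-parameter family $\Omega_s$ generated by a vector field $X=\phi\nu$ near $\Sigma$. The field is extended so that it is tangent to $\partial_0 Y$ along $\partial_0Y$; this keeps the flow inside $Y$ and preserves the admissibility constraint $\Omega_s\triangle\Omega_0\Subset Y\setminus(\partial_-Y\cup\partial_+Y)$. Lemma \ref{lem:relative-mububble} guarantees that $\Sigma$ is smooth, compact, and meets $\partial_0Y$ orthogonally, so such an extension exists. Since $\Omega$ minimizes $\cA$, we get $\delta\cA=0$ and $\delta^2\cA\geq0$ for every admissible $\phi$.

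\textbf{First variation.} The weighted area $\int_\Sigma u$ has derivative
\[
\int_\Sigma (uH_\Sigma+\partial_\nu u)\phi+\int_{\partial\Sigma}u\langle X,\bar\eta\rangle,
\]
where $\bar\eta$ is the outward conormal of $\partial\Sigma\subset\Sigma$. The boundary term vanishes because $X$ is tangent to $\partial_0Y$ and $\Sigma$ is orthogonal to $\partial_0Y$, so $\bar\eta$ is the normal of $\partial_0Y$. The bulk term contributes $-\int_\Sigma hu\phi$, with $\nu$ pointing out of $\Omega$. Setting the total to zero for all $\phi$ gives $uH_\Sigma+\partial_\nu u-hu=0$, which is \eqref{eq:relative-first-variation}. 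The sign convention for $H_\Sigma$ is the one matching this choice of $\nu$.

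\textbf{Second variation.} Differentiate $s\mapsto \int_{\Sigma_s}u(H_s-h+\partial_{\nu_s}\log u)\phi_s$ at $s=0$. Since the bracket vanishes at $s=0$, only its derivative contributes, and we use three standard facts:
\begin{itemize}
\item the mean curvature evolves by $\partial_sH=-\Delta^\Sigma\phi-(|A_\Sigma|^2+\widetilde{\Ric}(\nu,\nu))\phi$;
\item the term $h$ gives $-\partial_\nu h\,\phi$;
\item the term $\partial_{\nu_s}\log u$ gives $\nabla^2\log u(\nu,\nu)\phi-\langle\nabla^\Sigma\log u,\nabla^\Sigma\phi\rangle$, using $\partial_s\nu=-\nabla^\Sigma\phi$.
\end{itemize}
Multiplying by $u\phi$ and integrating by parts the terms $-u\phi\Delta^\Sigma\phi$ and $-\phi\langle\nabla^\Sigma u,\nabla^\Sigma\phi\rangle$ produces $\int u|\nabla^\Sigma\phi|^2$. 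The leftover zeroth-order terms are rewritten with the identity
\[
\tilde\Delta u=\Delta^\Sigma u+\nabla^2u(\nu,\nu)+H_\Sigma\partial_\nu u,
\]
together with $u\nabla^2\log u(\nu,\nu)=\nabla^2u(\nu,\nu)-(\partial_\nu u)^2/u$ and the first variation equation. After these substitutions the interior terms combine to $\phi^2(\tilde\Delta u-\Delta^\Sigma u-\partial_\nu(hu))$ as stated. One has to be careful here: the mixed terms $H_\Sigma\partial_\nu u$ and $(\partial_\nu u)^2/u$ cancel precisely because $H_\Sigma=h-\partial_\nu\log u$.

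\textbf{Boundary term.} The integration by parts leaves $\int_{\partial\Sigma}u\phi\,\partial_{\bar\eta}\phi$. Because $X$ stays tangent to $\partial_0Y$ and the orthogonality is preserved to first order, the standard free boundary computation gives $\partial_{\bar\eta}\phi=-\mathrm{II}_{\partial_0Y}(\nu,\nu)\phi$, up to the sign convention for $\mathrm{II}$ fixed in \cite{wuyujie}. This yields the boundary integral in \eqref{eq:relative-second-variation}.

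\textbf{Curvature identity.} The final identity follows from the traced Gauss equation $\widetilde R=R_\Sigma+2\widetilde{\Ric}(\nu,\nu)+|A_\Sigma|^2-H_\Sigma^2$. Solving for $\widetilde{\Ric}(\nu,\nu)$ and adding $|A_\Sigma|^2$ gives the stated formula.

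\textbf{Main obstacle.} The main difficulty is the boundary analysis: we must justify admissible variations near the corner $\partial\Sigma\subset\partial_0Y$ and extract the correct boundary term. This requires that the minimizer stays away from $\partial_\pm Y$, which Lemma \ref{lem:relative-mububble} supplies, so that only $\partial_0Y$ interacts with $\Sigma$. The rest is bookkeeping, as in \cite{chodosh-li-bubble,wuyujie}.
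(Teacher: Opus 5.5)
\textbf{There is a genuine gap in the boundary term.} Your interior computation is correct: the evolution of $H$, the $h$ and $\partial_{\nu_s}\log u$ contributions, the identity $\tilde\Delta u=\Delta^\Sigma u+\nabla^2u(\nu,\nu)+H_\Sigma\partial_\nu u$, the cancellation via $H_\Sigma=h-\partial_\nu\log u$, and the traced Gauss equation all check out. The boundary term, however, is obtained by an invalid step.

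In the second variation you differentiate only $\int_{\Sigma_s}u(H_s-h+\partial_{\nu_s}\log u)\phi_s$. You drop the boundary part $\int_{\partial\Sigma_s}u\langle X,\bar\eta_s\rangle$ of the first variation at time $s$. That term vanishes at $s=0$, but not for $s\neq0$, because $\Sigma_s$ is in general no longer orthogonal to $\partial_0Y$.

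You then justify the boundary integral by saying orthogonality is preserved to first order, so that $\partial_{\bar\eta}\phi=-\mathrm{II}_{\partial_0Y}(\nu,\nu)\phi$. This is false for a general admissible $\phi$. Tangency of $X$ to $\partial_0Y$ is automatic for $X=\phi\nu$ along $\partial\Sigma$ and places no condition on $\partial_{\bar\eta}\phi$, which is arbitrary. In fact $\frac{d}{ds}\big|_{s=0}\langle\nu_s,N\rangle=-\partial_{\bar\eta}\phi+\mathrm{II}_{\partial_0Y}(\nu,\nu)\phi$, with $N$ the outward normal of $\partial_0Y$. So first-order orthogonality is an extra Robin condition on $\phi$.

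The signs are also off. With the paper's convention $\mathrm{II}_{\Sph^n}=1$, that condition reads $\partial_{\bar\eta}\phi=+\mathrm{II}(\nu,\nu)\phi$. Integrating $-\phi\operatorname{div}_\Sigma(u\nabla^\Sigma\phi)$ by parts leaves $-\int_{\partial\Sigma}u\phi\,\partial_{\bar\eta}\phi$. Both of your signs are flipped, and they happen to cancel.

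As written, your argument proves the formula only for $\phi$ satisfying this Robin condition. The statement claims it for every admissible $\phi$. The paper also later applies it with $\phi=w^{-a/2}\psi$ for arbitrary $\psi$, where $\mathrm{II}=0$ but $\partial_{\bar\eta}\phi=w^{-a/2}(\partial_{\bar\eta}\psi-\tfrac{an}{4}\psi)\neq0$ in general.

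\textbf{The fix.} Keep and differentiate the first-variation boundary term. Since $\langle X,\bar\eta\rangle=0$ at $s=0$,
\[
\frac{d}{ds}\Big|_{s=0}\int_{\partial\Sigma_s}u\langle X,\bar\eta_s\rangle
=\int_{\partial\Sigma}u\Big(\langle\nabla_XX,N\rangle+\phi\big\langle\nu,\tfrac{D}{ds}\bar\eta_s\big\rangle\Big)
=\int_{\partial\Sigma}u\big(-\mathrm{II}_{\partial_0Y}(\nu,\nu)\phi^2+\phi\,\partial_{\bar\eta}\phi\big).
\]
This uses two facts. First, $\langle\nabla_XX,N\rangle=-\mathrm{II}_{\partial_0Y}(X,X)$, because $X$ is tangent to $\partial_0Y$. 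Second, $\langle\nu,\tfrac{D}{ds}\bar\eta_s\rangle=-\langle\bar\eta,\tfrac{D}{ds}\nu_s\rangle=\partial_{\bar\eta}\phi$. The $\phi\,\partial_{\bar\eta}\phi$ term cancels the one from your integration by parts, leaving exactly $-\int_{\partial\Sigma}u\phi^2\,\mathrm{II}_{\partial_0Y}(\nu,\nu)$ for every $\phi$.

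Alternatively, you could restrict to orthogonality-preserving variations and then pass to general $\phi$ by $H^1$-density of functions satisfying the Robin condition, using that the quadratic form (including the boundary trace term) is $H^1$-continuous. That route gives the inequality $0\le\delta^2\cA(\phi)$ for all $\phi$, which is all the paper uses later. It does not give the stated identity for arbitrary variations, and the restriction and density step would have to be made explicit.
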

We note that from now on, \(\Sigma\) will be a \(\mu\)-bubble obtained on some Riemannian band with boundary in \((N,\tilde{g})\): obtained from applying the Gromov--Lawson conformal metric to \(M\setminus F^{-1}(0)\); and we always consider the induced metric on \(\Sigma\) from \(N\) and denote the induced Levi-Civita connection as \(\nabla^{\Sigma}\).

\subsection{Spectral Ricci Positivity of \(\mu\)-Bubble}
By Propositions \ref{prop:spectral-biric-boundary} and \ref{prop:weighted-biric-dim-five},
there is a positive function $w$ on \(N\) satisfying
\begin{equation}\label{eq:weighted-positive-solution}
        -a\frac{\tilde\Delta w}{w}
        =V\geq\delta-\widetilde\Lambda_\alpha,
        \qquad
        \partial_\eta\log w=\frac{n}{2},
\end{equation}
with \(a=\alpha=\delta=1\) when \(n=4\) and \(\alpha, a, \delta\) are given in (\eqref{eq:mazet-parameters}) when \(n=5\).

We first prepare a perturbation lemma (see \cite[Lemma 4.3]{chen-hong-nonexistence}) to obtain dihedral angles (no more than $\pi/2$) which is required in the definition of Riemannian band with boundary in this section. This later allows us to prove the existence of free boundary $\mu$-bubbles.
\begin{lemma}\label{perturbation}
Let \((V,\p V,\tilde{g})\) be a smooth compact manifold with corner, whose boundary \(\p V\) is decomposed as
\[
        \partial V=\partial_0V\cup\partial_-V\cup\partial_+V \quad \p_- V \cap \p_+ V =\emptyset
\]
where each \(\p_i V, i\in \{0,+,-\}\) is a smooth \(n-1\) manifold with boundary, and \(\p_{\pm} V\) meets with \(\p_0 V\) at an angle $\theta(x)\in (0,\pi)$. 

Let $\psi$ be a smooth function on $(V,\partial V, \tilde{g})$. Suppose
\[
       \psi|_{\partial_{\pm} V}=\pm A , \ \ \ |\psi|\leq A.
\]
Denote
$q=\operatorname{Lip}(\psi)$.  Given $\sigma>0$ and arbitrarily small
neighborhoods $U_\pm$ of $\partial_\pm V$, there is a smooth function
$\widetilde\psi$ on $V$ such that
\begin{enumerate}[label=\textnormal{(\roman*)}]
\item $\widetilde\psi=\psi$ outside $U_+\cup U_-$;
\item the two faces $\{\widetilde\psi=\pm A\}$ meet $\partial_0V$ at
non-obtuse interior dihedral angles;
\item $\operatorname{Lip}(\widetilde\psi)\leq q+\sigma$.
\end{enumerate}
If one of the original faces $\partial_\pm V$ is already non-obtuse, that face is left
unchanged.  The displacement of each modified face can be made arbitrarily
close to $\partial_\pm V$.
\end{lemma}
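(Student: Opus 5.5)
We will modify $\psi$ separately near $\partial_+V$ and near $\partial_-V$; by symmetry ($\psi\mapsto-\psi$) it suffices to treat $\partial_+V$. On the part of $\partial_+V\cap\partial_0V$ where the dihedral angle is already non-obtuse nothing needs to change. If the whole face is non-obtuse we leave it untouched; otherwise we work in a thin collar. The new face will be a level set $\{\widetilde\psi=A\}$ that is a small graph over $\partial_+V$ bending toward $\partial_0V$ so as to hit it orthogonally. Concretely, near the corner $\Gamma_+=\partial_+V\cap\partial_0V$, introduce Fermi-type coordinates $(s,y,z)$, where $y\in\Gamma_+$, $s\ge0$ is the distance to $\partial_0V$, and $z$ is a coordinate transverse to $\partial_+V$ inside $\partial_0V$. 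In these coordinates $\partial_0V=\{s=0\}$, and $\partial_+V$ is a graph $z=\ell(y)s+O(s^2)$ with slope $\ell$ determined by $\cot\theta$.

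Construction. Pick a small $\epsilon>0$ and a hypersurface $S_\epsilon\subset V$ within distance $\epsilon$ of $\partial_+V$. We want $S_\epsilon$ to coincide with $\partial_+V$ outside an $\epsilon$-neighborhood of $\Gamma_+$, and to be given near $\Gamma_+$ by $z=\ell(y)s\,\beta(s/\epsilon)-\epsilon^2$ (shifted inward), where $\beta$ is a cutoff equal to $0$ near $0$ and $1$ for $s\ge1$. Then $S_\epsilon$ meets $\partial_0V$ with $\partial_s z=0$, i.e.\ orthogonally up to the $O(\epsilon)$ metric error. Tilting slightly further makes the angle strictly non-obtuse. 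Let $V'$ be the region bounded by $S_\epsilon$ on the $\partial_-V$ side. We then define $\widetilde\psi$ as follows: it equals $\psi$ away from $U_+$; near $S_\epsilon$ it equals $A+\lambda\,\dist(\cdot,S_\epsilon)$ (signed) so that $\{\widetilde\psi=A\}=S_\epsilon$; and in between the two are glued by a partition of unity and then composed with a truncation so that $|\widetilde\psi|\le A$ on $V'$. (Here $V$ is effectively replaced by $V'$, as the statement allows the face to move.)

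Lipschitz control. This is the main obstacle, since a naive interpolation across a collar of width $\epsilon$ adds Lipschitz constant of order $A/\epsilon$. The fix is to avoid interpolating values across a thin region. Set $\widetilde\psi=\min\{A,\ \psi+q\,d_\epsilon\}$, where $d_\epsilon\ge0$ is a smoothing of the $\tilde g$-distance-type function measuring how far $S_\epsilon$ sits inside relative to $\partial_+V$; alternatively, use $\widetilde\psi(x)=\psi(\Phi_\epsilon(x))$ with $\Phi_\epsilon$ a diffeomorphism of the collar carrying $S_\epsilon$ onto $\partial_+V$. Since $S_\epsilon$ is $C^1$-close to $\partial_+V$ except on the corner region where only the slope changes by a bounded amount, $\Phi_\epsilon$ is not $C^1$-close to the identity. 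So we prefer the first option: take $\widetilde\psi=\psi+\eta$ with $\eta=A-\psi$ on $S_\epsilon$ extended with gradient bounded by $\sup_{S_\epsilon}|\nabla(A-\psi)|$ restricted to the thin region. Because $A-\psi=O(q\epsilon)$ on $S_\epsilon$ and the extension lives on scale $\epsilon$, the added gradient is $O(q)\cdot(\text{slope change})$. This is where care is needed: we would first choose the tilt profile so that the slope change over the corner region is at most $\sigma/q$, using a long logarithmic-type profile $\beta$ spread over the collar, whose width is fixed by $U_+$ rather than by $\epsilon$. We then smooth, and verify (i)–(iii).
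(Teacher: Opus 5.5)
\textbf{Verdict: there is a gap in the Lipschitz estimate (iii).} Your bending of the face near $\Gamma_+$ is essentially the paper's idea. The paper pushes $\partial_+V$ inward by an amount $\tau_1 f\geq 0$ along a vector field $X$ that is tangent to $\partial_0V$ on $\Gamma_+$, with a large edge slope $\lambda=\tau_1\partial_s f\geq 0$. Pushing inward by a nonnegative amount is also what keeps the new face inside $V$. Your graph $z=\ell s\,\beta(s/\epsilon)-\epsilon^2$ leaves $V$ where $\ell<0$. If you instead cut it off in $y$, you get either that exit or obtuse angles where $\ell>0$ but the cutoff is below $1$, so the localization needs this repair.

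\textbf{Why your fix for (iii) fails.} By your own count, extending $\eta=A-\psi|_{S_\epsilon}$ on the scale of the displacement of $S_\epsilon$ adds a gradient of order $q$ times the slope change. You propose to choose the profile so that the slope change is at most $\sigma/q$. That is impossible. The face must rotate from the given obtuse angle $\theta$ to an angle $\leq\pi/2$, so the total slope change is fixed by $\theta$ and no profile $\beta$ reduces it. Spreading $\beta$ over a wider collar only makes the displacement of $S_\epsilon$, and hence $A-\psi$ on $S_\epsilon$, grow in proportion to the width. The ratio that controls the extra gradient is therefore unchanged. For the same reason, the naive interpolation does not cost $A/\epsilon$ as you say; it costs an additive $O(q)$, because it is done on the scale of the displacement.

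\textbf{The missing idea: separate scales.} This is how the paper proceeds, and it makes the size of the bending irrelevant.
\begin{enumerate}
\item Fix $\tau>0$ with $\{A-\tau\leq\psi\leq A\}\subset U_+$. Since $\psi$ is $q$-Lipschitz and equals $A$ on $\partial_+V$, $\operatorname{dist}(\partial_+V,\psi^{-1}(A-\tau))\geq\tau/q$. This width does not depend on the bending.
\item Choose the displacement $\epsilon\ll\tau/q$; the bending angle can be anything.
\item Keep $\widetilde\psi=\psi$ on $\{\psi\leq A-\tau\}$ and set $\widetilde\psi=A$ on the thin slab $R_\epsilon$ between $S_\epsilon$ and $\partial_+V$.
\item Interpolate across the whole region $\{A-\tau<\psi<A\}$, not across the slab.
\end{enumerate}
A concrete choice is $\widetilde\psi=\max\{\psi,\,A-L\operatorname{dist}(\cdot,R_\epsilon)\}$ with $L=q\tau/(\tau-q\epsilon)$.
\begin{itemize}
\item Using $\operatorname{dist}(x,R_\epsilon)\geq (A-\psi(x))/q-\epsilon$, one checks that it equals $\psi$ on $\{\psi\leq A-\tau\}$.
\item It equals $A$ on $R_\epsilon$.
\item It is $L$-Lipschitz.
\end{itemize}
Since $L\to q$ as $\epsilon/\tau\to0$, a small smoothing gives (iii). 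This is exactly the paper's bound $q\tau/(\tau-q\tau_1)+o(1)$. The value deficit $O(q\epsilon)$ is absorbed over a distance of order $\tau/q$ instead of $\epsilon$.
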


\begin{proof}
We give the construction at the upper face; the lower face is treated by
applying the same argument to $-\psi$.  Denote
\[ 
        \Gamma_+=\partial_+ V \cap\partial_0V.
\]
If the interior angle between $\partial_+ V$ and $\partial_0 V$ is already at most
$\pi/2$ everywhere, no modification is made.  Otherwise choose $\tau>0$ so
small that $A-\tau$ is a regular value of both $\psi$ and
$\psi|_{\partial_0 V}$ and that the region between the levels $A-\tau$ and $A$
is contained in $U_+$.  Since $\psi$ is $q$-Lipschitz,
\begin{equation}\label{eq:common-level-distance}
 \dist_{\tilde g}\bigl(\partial_+ V,\psi^{-1}(A-\tau)\bigr)
 \geq\frac{\tau}{q}.
\end{equation}

Fix $0<\tau_1\ll\tau/q$.  In a $\tau_1$-tubular neighborhood of $\partial_+ V$,
choose a smooth vector field $X$ with $|X|\leq 1$ which is transverse to $\partial_+ V$, points toward
$V$, and is tangent to $\partial_0 V$ along $\Gamma_+$.  In particular, take $X$ to be the unit normal to $\Gamma_+$ in $\partial_0 V$. Let $\Phi_t$ be its local flow of $X$.

Use collar coordinates $(z,s)\in\Gamma_+\times[0,s_0)$ on $\partial_+ V$, where
$\xi=\partial_s$ is the unit inward conormal of $\Gamma_+$ in $\partial_+ V$.  Choose
$f:\partial_+ V\to[0,1]$ with $f(z,0)=0$ and define the flow graph
\[
        G_f=\{\Phi_{\tau_1f(y)}(y):y\in \partial_+ V\}.
\]
Near the edge it is parametrized by
\[
        F(z,s)=\Phi_{\tau_1f(z,s)}(z,s).
\]
Because $\partial_t\Phi_t=X\circ\Phi_t$, the chain rule gives at $s=0$
\[
 F_*(\partial_s)
 =\partial_s+\tau_1(\partial_s f)X.
\]
Moreover, $f(z,0)\equiv0$ implies
$F_*(\partial_{z_i})=\partial_{z_i}$.  Consequently, at
$p\in\Gamma_+$,
\begin{align*}
 T_pG_f
 &=T_p\Gamma_+\oplus
   \operatorname{span}\{\xi+\lambda X\},\\
 T_p\partial_0 Y
 &=T_p\Gamma_+\oplus\operatorname{span}\{X\},
 \qquad
 \lambda=\tau_1\partial_s f(p).
\end{align*}

Set $c(p)=\langle\xi,X\rangle \in (-1,1)$ by transversality, so \(1+2c\lambda+\lambda^2=(c+\lambda)^2+1-c^2>0\).  With the inward orientations just chosen,
the original interior angle $\theta_0$ satisfies
$\cos\theta_0=c(p)$.  The interior angle $\theta_f$ of the flow graph is
therefore determined by
\begin{equation}\label{eq:common-flow-graph-angle}
 \cos\theta_f
 =\frac{\langle\xi+\lambda X,X\rangle}{|\xi+\lambda X|}
 =\frac{c+\lambda}{\sqrt{1+2c\lambda+\lambda^2}}.
\end{equation}
Thus no change is needed where $c\geq0$.  Where $c<0$, any choice of 
$\lambda(z)\geq-c$ makes $\theta_f\leq\pi/2$; indeed,
$\cos\theta_f\to1$ as $\lambda(z)\to+\infty$.  Compactness of $\Gamma_+$ allows
$\lambda$ to be chosen smoothly and uniformly large enough.  This is
compatible with $0\leq f\leq1$: if
$\chi:[0,\infty)\to[0,1]$ is smooth and $\chi(r)=r$ near zero, set
\[
        f(z,s)=\chi\left(\frac{\lambda(z)}{\tau_1}s\right)
\]
near the edge.  Then
$\tau_1\partial_s f(z,0)=\lambda(z)$, while the transition occurs on a scale
of order $\tau_1/\lambda(z)$.

The graph $G_f$ lies in the $\tau_1$-neighborhood of $\partial_+ V$.  By
\eqref{eq:common-level-distance}, it is disjoint from
$\psi^{-1}(A-\tau)$ when $\tau_1<\tau/q$.  Define $\psi_+$ to equal $\psi$
on $\{\psi\leq A-\tau\}$, to equal $A$ on $G_f$ and beyond it, and interpolate
smoothly between the two faces.  Their boundary values differ by $\tau$ and
their distance is at least $\tau/q-\tau_1$, so the interpolation and an
arbitrarily small smoothing may be chosen with
\begin{equation}\label{eq:common-interpolation-lipschitz}
 \operatorname{Lip}(\psi_+)
 \leq\frac{q\tau}{\tau-q\tau_1}+o(1).
\end{equation}
The right-hand side tends to $q$ as $\tau_1\downarrow0$.  Hence it is at most
$q+\sigma/2$ after choosing $\tau_1$ and the smoothing error sufficiently
small.  Applying the same construction to $-\psi_+$ near the lower face,
with Lipschitz increase at most $\sigma/2$, gives $\widetilde\psi$.  The two
supports are disjoint, and all three conclusions follow.
\end{proof}

\begin{theorem}
\label{thm:weighted-relative-bubble-dim-five}
Let \(V \subset (N^n,\tilde{g})\) be a smooth manifold with corner as defined in Lemma \ref{perturbation} with \(\p_0 V \subset \p N\) and  \((\p_{\pm} V \setminus (\p_0 V))\subset N^{\circ}\). 

Assume \(d_{\tilde{g}}(\p_- V, \p_+V)\geq  L_0\), where \(L_0=4\pi+1\) when \(n=4\), and \(L_0=100\pi\) when \(n=5\), then there is a smooth prescribed mean curvature function \(h\in C^{\infty}(Y)\) for some subset \(Y \subset V\), and \(Y\) is a Riemannian band with boundary  such that the following weighted \(\mu\)-bubble functional defined for \(\Omega\triangle\Omega_0 \Subset Y\setminus(\p_-Y \cup \p_+Y)\)
\begin{equation}\label{eq:relative-weighted-functional-dim-five}
 \cA_a(\Omega)
 =
 \int_{\partial^*\Omega\cap\Int Y}w^a\,d\cH^{n-1}_{\tilde g}
 -
 \int_Y(\chi_\Omega-\chi_{\Omega_0})hw^a\,d\tilde\mu.
\end{equation}
has a minimizer \(\Omega\) whose smooth boundary \(\Sigma\) lies in the closed $L_0$-neighborhood of $\partial_+Y$, and satisfies
\begin{equation}\label{eq:weighted-bubble-spectral-ricci-dim-five}
 \frac4{4-a}\int_\Sigma|\tilde{\nabla}^\Sigma\psi|^2\,d\tilde{\mu}_\Sigma
 \geq
 \int_\Sigma
 \left(\frac\delta2-\alpha\lambda_{\Ric}^\Sigma\right)
 \psi^2\,d\tilde{\mu}_\Sigma
 +\frac{na}{2}\int_{\partial\Sigma}\psi^2\,d\tilde{\sigma}_{\partial\Sigma}
\end{equation}
for every smooth function $\psi$ on $\Sigma$.
\end{theorem}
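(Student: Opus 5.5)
First I construct the band and the weight $h$. Let $\rho$ be a smoothing of $d_{\tilde g}(\cdot,\partial_-V)$ with $\operatorname{Lip}(\rho)\le 1+\sigma$. Set $\psi=\rho-L_0/2$ truncated to $[-L_0/2,L_0/2]$, or a $\tilde g$-distance function of that width, so that $\psi=\pm A$ on the two faces. Lemma \ref{perturbation} then replaces the faces by level sets $\{\widetilde\psi=\pm A\}$ that meet $\partial_0V$ at non-obtuse angles, and $\operatorname{Lip}(\widetilde\psi)\le 1+2\sigma$. This region is the band $Y$. The prescribed mean curvature function is the standard one, $h=c_n\cot(\kappa\widetilde\psi)$ or $-c\tan(\kappa\widetilde\psi)$ with $\kappa=\pi/L_0$ up to a small margin. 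The constants are chosen so that $h\to\pm\infty$ on $\partial_\pm Y$ and $h$ satisfies a Riccati-type inequality
\begin{equation*}
\beta h^2-\kappa'|\tilde\nabla h|\ge -\epsilon\quad\text{on }Y,
\end{equation*}
where $\beta$ comes from the algebra below. Existence and regularity of a minimizer $\Omega$ for $\cA_a$ follow from Lemma \ref{lem:relative-mububble} with $u=w^a$. The localization of $\Sigma$ near $\partial_+Y$ I would obtain by placing $\Omega_0$ and the zero of $h$ appropriately and choosing $L_0$ large; I would try to get it from the barrier behavior of $h$, since the cutoff at width $L_0$ makes $\Sigma$ automatically lie within distance $L_0$.

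Next is the spectral computation on $\Sigma$. I apply Theorem \ref{thm:relative-variation} with $u=w^a$ and test function $\phi=w^{-a/2}\psi$, in the manner of \cite{Chodosh-Li-Minter-Stryker,mazet}. The first variation gives $H_\Sigma=h-a\partial_\nu\log w$. In the second variation I substitute $\tilde\Delta w^a-\Delta^\Sigma w^a$ using $-a\tilde\Delta w/w=V\ge\delta-\widetilde\Lambda_\alpha$ from \eqref{eq:weighted-positive-solution}. I expand $\Delta^\Sigma w^a$ and integrate by parts on $\Sigma$, which produces boundary terms $\int_{\partial\Sigma}\psi^2\partial_{\bar\eta}\log w^{a}$ along $\partial\Sigma\subset\partial N$. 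Since $\Sigma$ meets $\partial_0Y$ orthogonally, its conormal is the conormal $\eta$ of $\partial N$, so $\partial_\eta\log w=n/2$ gives the boundary term $\frac{na}{2}\int_{\partial\Sigma}\psi^2$. The term $\mathrm{II}_{\partial_0Y}(\nu,\nu)$ vanishes because $\partial N$ is totally geodesic in the cylinder metric. In the interior I combine $|A_\Sigma|^2+\widetilde\Ric(\nu,\nu)$ with $\widetilde\Lambda_\alpha$: choose $e_1=\nu$ and $e_2$ a unit eigenvector of the smallest Ricci eigenvalue of $\Sigma$, then use the Gauss equation. This gives
\begin{equation*}
\widetilde\Lambda_\alpha\le \widetilde\Ric(\nu,\nu)+\alpha\bigl(\lambda^\Sigma_{\Ric}+\text{(terms in }A_\Sigma)\bigr).
\end{equation*}
The $A_\Sigma$ terms are absorbed by $|A_\Sigma|^2$ together with the $H_\Sigma^2$ contribution, with $H_\Sigma$ replaced by $h-a\partial_\nu\log w$.

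The gradient cross terms $\langle\nabla^\Sigma\psi,\nabla^\Sigma\log w\rangle$ and $|\nabla^\Sigma\log w|^2$ are handled by Young's inequality, and this step produces the coefficient $\frac{4}{4-a}$ in front of $\int|\nabla^\Sigma\psi|^2$. The remaining potential is
\begin{equation*}
\delta-\alpha\lambda_{\Ric}^\Sigma+\beta h^2-|\tilde\nabla h|-(\text{terms in }\partial_\nu\log w).
\end{equation*}
The $\partial_\nu\log w$ terms must be absorbed into $h^2$ and a square, which is exactly where the parameter choice \eqref{eq:mazet-parameters} is used for $n=5$, with $a=\alpha=1$ for $n=4$. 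The choice of $h$ then makes $\beta h^2-|\tilde\nabla h|\ge-\delta/2$, which yields \eqref{eq:weighted-bubble-spectral-ricci-dim-five}.

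I expect the main obstacle to be this pointwise algebraic step. One must check that, with the weight $w^a$ and the $\alpha$-bi-Ricci, the $A_\Sigma$, $\partial_\nu\log w$ and $h$ terms close up into a nonnegative quadratic form plus $\beta h^2$ with $\beta>0$ large enough that the width $L_0$ suffices. For $n=4$ I would reproduce the computation of \cite{Chodosh-Li-Minter-Stryker} verbatim. For $n=5$ I would follow \cite{mazet}, tracking only the new boundary terms, which are favorable here.
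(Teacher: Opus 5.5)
Your proposal is correct and follows the paper's proof essentially step for step: a smoothed distance function adjusted by Lemma~\ref{perturbation} to obtain non-obtuse faces, and $h$ a rescaled tangent satisfying $\beta h^2-|\tilde\nabla h|\ge-\delta/2$ (this is the paper's $1+h^2-2|\tilde\nabla h|\ge0$ for $n=4$ and $|dh(\nu)|\le\frac\delta2+\frac1{22}h^2$ for $n=5$). The remaining steps also match the paper: the test function $\phi=w^{-a/2}\psi$ with Young's inequality producing $\frac4{4-a}$, the boundary term $a\,\partial_\eta\log w=\frac{na}2$ coming from orthogonality, the vanishing $\mathrm{II}_{\partial N}$ term, the pointwise algebra deferred to \cite{Chodosh-Li-Minter-Stryker,mazet}, and the localization of $\Sigma$ being automatic because the band has width at most $L_0$; the differences (distance to $\partial_-V$ rather than $\partial_+V$, unspecified constants in $h$, truncation rather than restricting to regular levels) are cosmetic.
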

\begin{proof}
We first use Lemma \ref{perturbation} above to
construct a Riemannian band with boundary where the barriers form non-obtuse interior dihedral angles.

Let
\[
        d=\dist_{\tilde g}(\,\cdot\,,\partial_+V).
\]
In the case $n=4$, choose $\varepsilon>0$ so small that
\[
        4\pi+4\varepsilon
        <
        \dist_{\tilde g}(\partial_-V,\partial_+V),
\]
and choose a smooth approximation $\rho_0$ of $d$ satisfying
\[
 |\tilde\nabla\rho_0|<2,
 \qquad
 |\rho_0-d|<\varepsilon,
 \qquad
 \rho_0=0\quad\hbox{on }\partial_+V.
\]
After arbitrarily small changes, assume that $\varepsilon$ and
$4\pi+2\varepsilon$ are regular values of both $\rho_0$ and
$\rho_0|_{\partial N}$.  Set
\[
        \varphi_4
        =
        \frac{\rho_0-\varepsilon}{4+\varepsilon/\pi}
        -\frac\pi2.
\]
Then
\[
        q_4:=\operatorname{Lip}(\varphi_4)
        \leq\frac{2}{4+\varepsilon/\pi}<\frac12.
\]
Apply Lemma \ref{perturbation} to \(\varphi_4\) with $A=\pi/2$ and
$0<\sigma_4<1/2-q_4$.  Denote the resulting function by
$\widetilde\varphi_4$ and define
\[
        h=-\tan\widetilde\varphi_4
        \quad\hbox{on}\quad
        \left\{-\frac\pi2<\widetilde\varphi_4<\frac\pi2\right\}=:Y.
\]
The new barriers are formed by \(\p_{\pm}Y:=\{\widetilde{\varphi}_4=\pm \frac{\pi}{2}\}\) and
\[
 |\tilde\nabla h|
 =(1+h^2)|\tilde\nabla\widetilde\varphi_4|
 <\frac12(1+h^2).
\]
Consequently,
\begin{equation}\label{eq:unified-four-barrier}
        1+h^2-2|\tilde\nabla h|\geq0.
\end{equation}
The adjustment may be supported in an $\varepsilon$-neighborhood of the
original levels, so \(Y\) is contained in
$B_{4\pi+4\varepsilon}^{\tilde g}(\partial_-V)$.

In the case $n=5$, Denote
\[
        T_5=11\pi\sqrt{\frac{88}{15}}.
\]
Choose $\varepsilon>0$ small enough that
$
        2\bigl(\varepsilon+(1+\varepsilon)T_5\bigr)
        +\varepsilon<100\pi,
$
and take a smooth approximation $\rho_0$ of $d$ such that
$
        \frac12d\leq\rho_0\leq2d,
     $ and $
        |\tilde\nabla\rho_0|\leq2.
$
We may arrange that
\[
        s_-:=\varepsilon,
        \qquad
        s_+:=\varepsilon+(1+\varepsilon)T_5
\]
are regular values of both $\rho_0$ and $\rho_0|_{\partial N}$.  Define
\[
        \psi_5=\frac{\rho_0-\varepsilon}{1+\varepsilon}-\frac{T_5}{2}.
\]
Note
\[
        q_5:=\operatorname{Lip}(\psi_5)
        \leq\frac{2}{1+\varepsilon}<2.
\]
Apply Lemma \ref{perturbation} with $A=T_5/2$ and
$0<\sigma_5<2-q_5$.  Denote $\widetilde\psi_5$ as the adjusted function, set
\[
        h=-k(\widetilde\psi_5+\frac{T_5}{2}),
\]
where
\[
 k(t)=
 \sqrt{\frac{33}{10}}\,
 \tan\left(
 \frac1{11}\sqrt{\frac{15}{88}}\,t-\frac\pi2
 \right),
 \qquad 0<t<T_5.
\]
Again the blow-up faces of $h$ are exactly the adjusted non-obtuse barriers.
Moreover,
\[
        |\tilde\nabla\widetilde\psi_5|<2,
        \qquad
        -k'=\frac3{44}+\frac5{242}k^2.
\]
Since $a=11/10$ and $\delta=3/10$,
\begin{align}
 |a\,dh(\nu)|
 &\leq
 2a\left(\frac3{44}+\frac5{242}h^2\right) \notag\\
 &=\frac\delta2+\frac1{22}h^2.
\label{eq:unified-five-barrier}
\end{align}
The comparison $d\leq2\rho_0$, the choice of $\varepsilon$, and the
arbitrarily small support of the angle adjustment place this band inside
$B_{100\pi}^{\tilde g}(\partial_+V)$.

In either case, choose a regular value $c_0$ of $h$ and Denote
$\Omega_0=\{h>c_0\}$ and minimize the following \(\cA_a (\cdot)\) among Caccioppoli sets such that \(\Omega\triangle \Omega_0\Subset(Y \setminus (\p_+ Y\cup \p_-Y))\),
\[
 \cA_a(\Omega)
 =
 \int_{\partial^*\Omega\cap\Int Y}w^a\,d\cH^{n-1}_{\tilde g}
 -
 \int_Y(\chi_\Omega-\chi_{\Omega_0})hw^a\,d\tilde\mu .
\]
Lemma \ref{lem:relative-mububble} gives a
smooth minimizing boundary which meets $\partial N$ orthogonally.  

Let $\nu$ be the outward unit normal of the minimizing set along $\Sigma$.
The first variation is
\[
        H=h-a\,d\log w(\nu).
\]
Apply Theorem \ref{thm:relative-variation} with weight $w^a$ and set
$\phi=w^{-a/2}\psi$.  The boundary term involving
$\mathrm{II}_{\partial N}(\nu,\nu)$ vanishes because $\partial N$ is totally
geodesic.  Moreover,
\begin{align*}
 \frac{\tilde\Delta(w^a)-\Delta^\Sigma(w^a)}{w^a}
 &={}
 a\frac{\tilde\Delta w-\Delta^\Sigma w}{w}
 +a(a-1)\bigl(d\log w(\nu)\bigr)^2,\\
 \frac{\partial_\nu(hw^a)}{w^a}
 &={}dh(\nu)+ah\,d\log w(\nu).
\end{align*}
Integration by parts in the tangential Laplacian term
gives
\begin{align*}
&\int_\Sigma w^a
 \left(|\nabla^\Sigma\phi|^2
       -aw^{-1}\Delta^\Sigma w\,\phi^2\right)\\
={}&
 \int_\Sigma
 \left(
 |\nabla^\Sigma\psi|^2
 +aw^{-1}\psi\langle\nabla^\Sigma w,\nabla^\Sigma\psi\rangle
 -\left(a-\frac{a^2}{4}\right)
  \psi^2w^{-2}|\nabla^\Sigma w|^2
 \right)  \\
&\quad
 -a\int_{\partial\Sigma}\psi^2\partial_\mu\log w ,
\end{align*}
where $\mu$ is the outward conormal of $\partial\Sigma\subset\Sigma$.  The
estimate
\[
 w^{-1}\psi\langle\nabla^\Sigma w,\nabla^\Sigma\psi\rangle
 \leq
 \frac1{4-a}|\nabla^\Sigma\psi|^2
 +\frac{4-a}{4}\psi^2w^{-2}|\nabla^\Sigma w|^2.
\]
exactly cancels the last interior gradient term.  Using also
$h=H+a\,d\log w(\nu)$, the second variation therefore gives 
\begin{align}
 \frac4{4-a}\int_\Sigma|\nabla^\Sigma\psi|^2
 \geq{}&
 \int_\Sigma
 \left(
 -a\frac{\tilde\Delta w}{w}
 +|A_{\Sigma}|^2+\widetilde{\Ric}(\nu,\nu)
 \right)\psi^2 \notag\\
 &+\int_\Sigma
 \left(
 dh(\nu)+a\bigl(d\log w(\nu)\bigr)^2
 +aH\,d\log w(\nu)
 \right)\psi^2 \notag\\
 &+a\int_{\partial\Sigma}\psi^2\partial_\mu\log w.
\label{eq:unified-common-second-variation}
\end{align}

Up to this point the argument is common to both dimensions $n=4$ and $n=5$.  We now estimate
the remaining interior terms separately.  In the case $n=4$, using
$a=\alpha=\delta=1$ and $H=h-d\log w(\nu)$, the second variation gives
\begin{align*}
 \frac43\int_\Sigma|\nabla^\Sigma\psi|^2
 \geq{}&
 \int_\Sigma
 \left(
 -\frac{\tilde\Delta w}{w}
 +|A_\Sigma|^2+\widetilde{\Ric}(\nu,\nu)-\frac12H^2-\frac12
 \right)\psi^2  \\
&+\frac12\int_\Sigma(1+h^2-2|\tilde\nabla h|)\psi^2
 +\int_{\partial\Sigma}\psi^2\partial_\mu\log w .
\end{align*}
 The lower bound
$-\tilde\Delta w/w\geq1-\tilde\Lambda_1$ and the Gauss equation imply
the pointwise estimate
\[
 -\frac{\tilde\Delta w}{w}
 +|A_\Sigma|^2+\widetilde{\Ric}(\nu,\nu)-\frac12H^2-\frac12
 \geq
 \frac12-\lambda_{\Ric}^{\Sigma},
\]
whose proof follows from the argument from equation (4.4) onwards in \cite{Chodosh-Li-Minter-Stryker}.
The  term \((1+h^2-2|\tilde\nabla h|)\) is nonnegative by construction. Together with the boundary term this gives the desired inequality when \(n=4\).

In the case $n=5$, choose $e_1\in T\Sigma$ with
$\Ric^\Sigma(e_1,e_1)=\lambda_{\Ric}^{\Sigma}$.  The Gauss equation, the lower
bound $-a\tilde\Delta w/w\geq\delta-\widetilde\Lambda_\alpha$, and
$H=h-a\,d\log w(\nu)$ give (see \cite[equation (12)]{mazet} or \cite[Lemma 3.2]{tam2024estimatesstableminimalhypersurfaces})
\[
 \frac4{4-a}\int_\Sigma|\nabla^\Sigma\psi|^2
 \geq
 \int_\Sigma
 \left(\delta-\alpha\lambda_{\Ric}^{\Sigma}
       +K+dh(\nu)\right)\psi^2
 +a\int_{\partial\Sigma}\psi^2\partial_\mu\log w ,
\]
where
\[
 K=
 |A|^2+\alpha HA_{11}
 -\alpha\sum_{j=1}^4A_{1j}^2
 +a\bigl(d\log w(\nu)\bigr)^2
 +aH\,d\log w(\nu).
\]
Decomposing $A$ into trace and
traceless variables, section 4.2 of \cite{mazet} shows  $K\geq h^2/22$.  Together with the estimate (\ref{eq:unified-five-barrier}) and that \(a>1\), we have
\[
        |dh(\nu)|\leq\frac\delta2+\frac1{22}h^2,
\]
we get $\delta/2+K+dh(\nu)\geq0$.

Both cases therefore leave the common interior lower bound
$\delta/2-\alpha\lambda_{\Ric}^{\Sigma}$.
Finally, orthogonality of $\Sigma$ and $\partial N$ implies
$\mu=\eta$ along $\partial\Sigma$. This gives the boundary term and completes the proof of the theorem.
\end{proof}

The inequality (\ref{eq:weighted-bubble-spectral-ricci-dim-five}) implies  that the free boundary \(\mu\)-bubble with totally geodesic boundary satisfies the following spectrally positive Ricci curvature condition.
We can now apply the boundary version of spectral Bishop-Gromov comparison of  Antonelli-Xu \cite{antonelli-xu}, proved in \cite{lijia2026} to obtain volume or diameter estimates.
\begin{equation}\label{eq:bubble-neumann-spectral-ricci}
\begin{cases}
        -\frac{4}{4-a}\Delta^\Sigma w+{\alpha}\lambda_{\Ric}^{\Sigma}w
        \geq\frac{\delta}{2} w
        &\text{on }\Sigma,\\[4pt]
        \partial_\mu w=0
        &\text{on }\partial\Sigma.
\end{cases}
\end{equation}

\begin{corollary}
\label{cor:relative-bubble-estimates}
When \(n=4\), the $3$-dimensional relative bubble $\Sigma$ obtained in Theorem
\ref{thm:weighted-relative-bubble-dim-five} satisfies
\[
        \operatorname{diam}_{\tilde g}(\Sigma)\leq 2\pi,
        \qquad
        \Vol_{\tilde g}(\Sigma)\leq 16\pi^2.
\]
\end{corollary}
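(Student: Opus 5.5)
For $n=4$ we have $a=\alpha=\delta=1$, so \eqref{eq:weighted-bubble-spectral-ricci-dim-five} reads
\[
\frac43\int_\Sigma|\nabla^\Sigma\psi|^2 \geq \int_\Sigma\Bigl(\frac12-\lambda_{\Ric}^\Sigma\Bigr)\psi^2+2\int_{\partial\Sigma}\psi^2 .
\]
The plan is to discard the nonnegative boundary term and read this as a spectral Ricci lower bound on the compact $3$-manifold $\Sigma$. Here $\partial\Sigma$ meets the totally geodesic $\partial N$ orthogonally.

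The first step is to take the first eigenfunction $w>0$ of the operator $-\frac43\Delta^\Sigma+\lambda_{\Ric}^\Sigma-\frac12$ with the Robin condition. Its first eigenvalue is nonnegative, and since the Robin term has a good sign, replacing it by the Neumann problem still gives a nonnegative first eigenvalue. So we may take $w$ to be the Neumann eigenfunction, which yields \eqref{eq:bubble-neumann-spectral-ricci}:
\[
-\gamma\Delta^\Sigma w+\lambda_{\Ric}^\Sigma w\geq \lambda w,\qquad \partial_\mu w=0,
\]
with $\gamma=4/3$ and $\lambda=1/2$.

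The next step is to check convexity of the boundary. By Theorem \ref{thm:relative-variation}, $\Sigma$ meets $\partial N$ orthogonally. Because $\partial N$ is totally geodesic in $N$, the second fundamental form of $\partial\Sigma$ in $\Sigma$ equals the restriction of $\mathrm{II}_{\partial N}$ and hence vanishes. Thus $\partial\Sigma$ is totally geodesic, in particular convex, in $\Sigma$. This is exactly the setting of the boundary spectral Bishop--Gromov comparison of Antonelli--Xu proved in \cite{lijia2026}.

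We then invoke that theorem with dimension $3$, $\gamma=4/3$ and $\lambda=1/2$. The Antonelli--Xu range for $n=3$ is $\gamma<\frac{n-1}{n-2}\cdot$ (appropriate threshold), which comfortably contains $\gamma=4/3<4$. The comparison model is the round $3$-sphere with Ricci $\equiv\lambda\cdot(n-1)/(n-1)$ normalized as in their statement, that is, sectional curvature $\frac{\lambda}{n-1}=\frac14$, which has radius $2$. This gives
\[
\operatorname{diam}\leq \pi\sqrt{\frac{n-1}{\lambda}}=\pi\sqrt{4}=2\pi,
\]
and
\[
\Vol\leq \Vol(\Sph^3(2))=2\pi^2\cdot 8=16\pi^2 .
\]

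The main obstacle is matching constants with the precise normalization in \cite{antonelli-xu,lijia2026}. Their sharp statement for $\gamma$ in the admissible range gives volume at most that of the round sphere with $\Ric=\lambda$, i.e. $\Sph^3$ of radius $\sqrt{(n-1)/\lambda}=2$, and diameter at most $\pi\sqrt{(n-1)/\lambda}$. We should verify that $\gamma=4/3$ lies in the range where the comparison is sharp, namely $\gamma\leq \frac{4}{n-1}=2$ for $n=3$. Then we check that the Neumann condition together with totally geodesic boundary is exactly the hypothesis required by \cite{lijia2026}. Along the way, the Robin-to-Neumann reduction should be justified by the variational characterization of the first eigenvalue, and smoothness of $w$ by elliptic regularity.
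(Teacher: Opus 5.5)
Your proposal is correct and is essentially the paper's proof. You discard the nonnegative boundary term and take the first Neumann eigenfunction. You note that $\partial\Sigma$ is totally geodesic in $\Sigma$ because $\Sigma$ meets the totally geodesic $\partial N$ orthogonally. You then apply the boundary spectral Bishop--Gromov comparison of \cite{lijia2026} with $\theta=\frac43$ and Ricci lower bound $\frac12=(n-1)\cdot\frac14$; its model is the round $\Sph^3$ of radius $2$, which gives $\operatorname{diam}\leq 2\pi$ and $\Vol\leq 16\pi^2$.

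Two tidy-ups. The admissibility check is simply $\theta=\frac43\leq\frac{n-1}{n-2}=2$ (equal to $\frac{4}{n-1}$ when $n=3$), so your ``$\gamma<4$'' remark is spurious. The Robin-eigenfunction detour is also unnecessary, since the Neumann quadratic form already dominates the given one.
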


\begin{proof}
Dropping the nonnegative boundary term in
(\ref{eq:weighted-bubble-spectral-ricci-dim-five}) gives
\[
        \int_\Sigma
        \left(\frac43|\nabla^\Sigma\psi|^2
        +\lambda_{\Ric}^{\Sigma}\psi^2\right)\,d\mu_\Sigma
        \geq
        \frac12\int_\Sigma\psi^2\,d\mu_\Sigma .
\]
The first Neumann eigenfunction  of the preceding inequality implies
\begin{equation}\label{eq:bubble-neumann-spectral-ricci}
\begin{cases}
        -\dfrac43\Delta^\Sigma w+\lambda_{\Ric}^{\Sigma}w
        \geq\dfrac12w
        &\text{on }\Sigma,\\[4pt]
        \partial_\mu w=0
        &\text{on }\partial\Sigma.
\end{cases}
\end{equation}
Moreover, $\partial\Sigma$ is totally geodesic in $\Sigma$.  Indeed, if
$X,Y\in T(\partial\Sigma)$, then the orthogonality of $\Sigma$ and
$\partial N$ gives
\[
        \mathrm{II}_{\partial\Sigma\subset\Sigma}(X,Y)
        =
        \langle\widetilde\nabla_X\mu,Y\rangle
        =
        \mathrm{II}_{\partial N\subset N}(X,Y)=0.
\]
Thus $\partial\Sigma$ is weakly convex.

We now apply the boundary spectral comparison theorem of
\cite[Theorems 4 and Remark 22]{lijia2026} in which 
\(
        n=3,
        \theta=\frac43,
        (n-1)\lambda=\frac12,
\)
so $\lambda=1/4$.  Notice that \(
        \theta=\frac43\leq\frac{n-1}{n-2}=2.
\)
Since $n=3$, the exponent
$\frac{n-3}{n-1}\theta$ in the diameter estimate vanishes.  Hence
\[
        \operatorname{diam}_{\tilde g}(\Sigma)
        \leq\frac{\pi}{\sqrt{\lambda}}=2\pi, \quad  \Vol_{\tilde g}(\Sigma)
        \leq\lambda^{-3/2}\Vol(\Sph^3)
        =16\pi^2.
\]
\end{proof}

\begin{corollary}
\label{cor:weighted-relative-bubble-volume-dim-five}
When \(n=5\), the $4$-dimensional relative \(\mu\)-bubble in Theorem
\ref{thm:weighted-relative-bubble-dim-five} satisfies
\begin{equation}\label{eq:weighted-relative-volume-bound}
 \Vol_{\tilde g}(\Sigma)
 \leq
 \left(\frac{800}{43}\right)^2\Vol(\Sph^4_+)
 =
 \left(\frac{800}{43}\right)^2\frac{8\pi^2}{3}.
\end{equation}
\end{corollary}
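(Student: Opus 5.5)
The plan is to follow the proof of Corollary~\ref{cor:relative-bubble-estimates}, now with $\dim\Sigma=4$ and the parameters \eqref{eq:mazet-parameters}, and to read off the constant from the boundary spectral Bishop--Gromov comparison of \cite{lijia2026}.

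\emph{Step 1: normalize the spectral inequality.} Start from \eqref{eq:weighted-bubble-spectral-ricci-dim-five} and drop the boundary term $\frac{na}{2}\int_{\partial\Sigma}\psi^2$, which is nonnegative. Then divide by $\alpha=\frac{40}{43}$. Since $\frac4{4-a}=\frac{40}{29}$, this gives, for every smooth $\psi$ on $\Sigma$,
\[
\int_\Sigma\Bigl(\theta|\nabla^\Sigma\psi|^2+\lambda^\Sigma_{\Ric}\psi^2\Bigr)\,d\tilde\mu_\Sigma
\geq \frac{\delta}{2\alpha}\int_\Sigma\psi^2\,d\tilde\mu_\Sigma,
\qquad
\theta=\frac{4}{(4-a)\alpha}=\frac{43}{29},
\qquad
\frac{\delta}{2\alpha}=\frac{129}{800}.
\]
Taking the first Neumann eigenfunction of $-\theta\Delta^\Sigma+\lambda^\Sigma_{\Ric}$ produces a positive $w$ on $\Sigma$ with $-\theta\Delta^\Sigma w+\lambda^\Sigma_{\Ric}w\geq \frac{129}{800}w$ and $\partial_\mu w=0$ on $\partial\Sigma$.

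\emph{Step 2: convexity of $\partial\Sigma$.} This is exactly as in the $n=4$ case. $\Sigma$ meets $\partial N$ orthogonally and $\partial N$ is totally geodesic in $(N,\tilde g)$. Hence $\sff_{\partial\Sigma\subset\Sigma}(X,Y)=\langle\widetilde\nabla_X\mu,Y\rangle=\sff_{\partial N\subset N}(X,Y)=0$, so $\partial\Sigma$ is totally geodesic and in particular weakly convex.

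\emph{Step 3: apply the comparison theorem.} Apply \cite[Theorem 4 and Remark 22]{lijia2026} in dimension $4$. The parameters are determined by $(4-1)\lambda=\frac{129}{800}$, so $\lambda=\frac{43}{800}$, together with $\theta=\frac{43}{29}$. The admissibility condition is $\theta\leq\frac{n-1}{n-2}=\frac32$. This holds because $43\cdot 2=86<87=29\cdot3$, though only barely; this is where the specific choice \eqref{eq:mazet-parameters} matters. In the presence of a convex boundary with Neumann condition, the model space is the hemisphere of radius $\lambda^{-1/2}$. The volume conclusion is therefore
\[
\Vol_{\tilde g}(\Sigma)\leq\lambda^{-2}\Vol(\Sph^4_+)=\Bigl(\tfrac{800}{43}\Bigr)^2\tfrac{8\pi^2}{3},
\]
using $\Vol(\Sph^4_+)=\tfrac12\cdot\tfrac{8\pi^2}{3}$.

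\emph{Main obstacle.} The delicate point is Step 3: checking that the hypotheses of the boundary spectral comparison in \cite{lijia2026} are met in dimension $4$.
\begin{itemize}
\item The threshold on $\theta$ must be satisfied, with its precise form as stated there.
\item In contrast with $n=3$, the diameter estimate now carries a weight exponent $\frac{n-3}{n-1}\theta\neq0$. We would therefore use only the volume part of the theorem, which compares with the warped hemisphere model without needing a diameter bound.
\item We must confirm the normalization convention, namely whether $\lambda$ or $(n-1)\lambda$ is the spectral lower bound.
\end{itemize}
If the normalization differs, the same computation goes through, with the constant adjusted accordingly.
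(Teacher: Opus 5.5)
Your Steps 1 and 2, and the parameter check in Step 3 ($\theta=43/29<3/2$, $\lambda=43/800$), are exactly the paper's argument. The problem is the last line of Step 3.

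\textbf{The arithmetic does not close.} With $\Vol(\Sph^4_+)=\tfrac12\cdot\tfrac{8\pi^2}{3}=\tfrac{4\pi^2}{3}$, you get $\lambda^{-2}\Vol(\Sph^4_+)=(800/43)^2\cdot\tfrac{4\pi^2}{3}$, not $(800/43)^2\cdot\tfrac{8\pi^2}{3}$. The number in the statement is $\lambda^{-2}\Vol(\Sph^4)$, so the ``$\Sph^4_+$'' there is a misprint. The paper's proof concludes precisely $\Vol_{\tilde g}(\Sigma)\leq\lambda^{-2}\Vol(\Sph^4)$ from the cited comparison. This matches the $n=4$ case, where it obtains $\lambda^{-3/2}\Vol(\Sph^3)=16\pi^2$.

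\textbf{The hemisphere model is an unjustified strengthening.} It also cannot hold in the generality the paper needs. Nothing forces every component of the bubble to meet $\partial N$. A component lying in the interior is a closed $4$-manifold, on which the Neumann condition is vacuous. There the sharp spectral bound is $\lambda^{-2}\Vol(\Sph^4)$, attained by the round sphere of radius $\lambda^{-1/2}$, for which $\lambda_1(-\theta\Delta+\lambda_{\Ric})=3\lambda$ exactly. Section 5 applies the bound to every component of the bubble, so such components must be covered. A half-sphere bound is plausible only for components with nonempty totally geodesic boundary. Even there it would need its own argument, for instance a comparison based at a boundary point, and it is not what you invoked. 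Even granting it, you would reach the stated number only through the strict inequality $\Vol(\Sph^4_+)<\Vol(\Sph^4)$, not through the equality you wrote.

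\textbf{Fix.} Replace the hemisphere by the full sphere of radius $\lambda^{-1/2}$ in Step 3. Your proof then becomes the paper's and yields the stated value $(800/43)^2\cdot 8\pi^2/3$.
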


\begin{proof} To obtain (\ref{eq:bubble-neumann-spectral-ricci}), drop the nonnegative boundary term in
\eqref{eq:weighted-bubble-spectral-ricci-dim-five} and divide by $\alpha$.
Then
\begin{equation}\label{eq:four-dimensional-neumann-form}
 \int_\Sigma
 \left(
 \frac{4}{(4-a)\alpha}|\nabla^\Sigma\psi|^2
 +\lambda_{\Ric}^\Sigma\psi^2
 \right)
 \geq
 \frac{\delta}{2\alpha}\int_\Sigma\psi^2.
\end{equation}
The first Neumann eigenfunction of the operator in
\eqref{eq:four-dimensional-neumann-form} is positive and satisfies the
corresponding pointwise spectral Ricci inequality.  Moreover,
$\partial\Sigma$ is totally geodesic, hence weakly convex.

In the notation of the boundary spectral volume comparison theorem
\cite[Theorem 4 and Remark 22]{lijia2026}, the dimension is $m=4$ and
\[
 \theta=\frac4{(4-a)\alpha}=\frac{43}{29}
 <\frac32=\frac{m-1}{m-2},
 (m-1)\lambda=\frac{\delta}{2\alpha}, \lambda=\frac{\delta}{6\alpha}=\frac{43}{800}.
\]
The volume comparison gives
\[
 \Vol_{\tilde g}(\Sigma)
 \leq
 \lambda^{-2}\Vol(\Sph^4),
\]
which is \eqref{eq:weighted-relative-volume-bound}. 
\end{proof}

\section{Almost linear volume growth and Proof of Main Theorem}
Let \(F:M^n\rightarrow \B^{n+1}\) be a complete two-sided stable free boundary minimal hypersurface for \(n\in \{4,5\}\). We may pass to the universal cover of \(M\) to obtain a simply-connected complete two-sided stable free boundary minimal hypersurface in \(B^{n+1}\), still denoted as \(M\). 
Then since \(n\geq 3\), \(N=M \setminus F^{-1}(0)\) is also simply connected.

Now  fix \(x\in \p M\subset \Sph^n\), consider the following exhaustion \(C_k=B^M_{kL}(x)\) of \(M\) for \(L=L_0+1\) where \(L_0\) is the constant defined in Theorem \ref{thm:weighted-relative-bubble-dim-five}.

If the universal cover \(M\) is compact, we may plug in (\ref{eq:fb-stability}) a non-zero constant function  to obtain a contradiction.
By Theorem \ref{OneEnd}, when the universal cover \(M\) is non-compact, it has only one end \((E_k)_{k\in\NN}\), which is a nonparabolic boundary end.

For some large \(k_0\) to be decided later, we decompose \(M\) as follows,
\[M=C_{k_0+i} \cup (\cup_{k=k_0+i}^{k_0+2i-1}M_k \cup P_k)\cup (E_{k_0+2i-1} \setminus C_{k_0+2i}),\]
where \(M_k=C_{k+1}\cap E_k\), \(P_k\) denotes the union of all components of \(E_{k-1} \setminus C_{k}\) besides \(E_k\) for \(k\geq k_0+1\). Note since \(M\) has only one end, each \(P_k\) is a finite union of compact components.

Within each  \(M_k=C_{k+1}\cap E_k\), we can find \(Y_k\), a band with boundary in the sense of section \ref{FB-Bubble}, such that 
\[\p Y_k\cap F^{-1}(0)=\emptyset, \quad d_{g}(\p_-Y_k, \p_+ Y_k)\geq L_0.\] 
Since \(M\) is simply connected,  we have for the nonparabolic end \(E_k=\overline{M\setminus C_k}\),  \(\p_1 E_k\) is connected, \(M_k\) is connected, 
and the above constructed \(Y_k\) is connected with connected \(\p_{\pm}Y_k\).
Furthermore \(\p_+Y_k\) separates \(\p_1 E_k\) and \(\p B^{M}_{kL+\epsilon_k}(x)\cap E_k\); \(\p_- Y_k\) separates \(\p_1 E_{k+1}\) and \(\p B^{M}_{(k+1)L-\epsilon_k'}(x)\cap E_k\), for some small \(\epsilon_k, \epsilon_k'>0\).

We now look at the band with boundary \(Y_k \setminus F^{-1}(0)\) after the Gromov-Lawson conformal change \(({Y}_k \setminus F^{-1}(0),\tilde{g})\), since \(r\leq 1\), \(\tilde{g}\geq g\), we have
\[
d_{\tilde{g}}(\p_-Y_k, \p_+ Y_k)\geq L_0.
\]
If \(Y_k \cap  F^{-1}(0) \neq \emptyset\), then the conformal change creates finitely many ends to \(Y_k\) since \(F^{-1}(0)\) is discrete, which we cut off with another separating boundary \(\p_1Y_k\) of finite components such that
\[
d_{\tilde{g}}(\p_1Y_k, \p_+ Y_k)\geq L_0.
\]
We may assume \(\p_- Y_k \cap \p_1 Y_k=\emptyset\).

We now apply Corollary \ref{cor:relative-bubble-estimates} and \ref{cor:weighted-relative-bubble-volume-dim-five} to obtain a \(\mu\)-bubble \(\tilde{\Sigma}_k'\) separating \(\p_- Y_k \cup \p_1 Y_k\) and \(\p_+ Y_k\); each component of  \(\tilde{\Sigma}_k'\) has volume bounded from above by a constant \(V_0>0\). In particular, we denote the component that separates \(\p_+ Y_k\) and \(\p_- Y_k\) as \(\tilde{\Sigma}_k\), whose non-empty boundary lies in \(\Sph^n\). 

 Using \(\tilde{\Sigma}_k\) is compact in \((N,\tilde{g})\), we denote \((\Sigma_k,g)\) to be \(\tilde{\Sigma}_k\) with the original metric; \(\Sigma_k \cap F^{-1}(0)=\emptyset\). 
 Since \(\p_-Y_k\) and \(\p_+ Y_k\) remain unchanged under \(\tilde{g}=r^{-2}g\), \(\Sigma_k\) separates \(\p_{1}E_k\) and \(\p_{1}E_{k+1}\).
We now want to obtain volume growth control of the end \((E_k)_{k\in \NN}\).

We first use the following free boundary version of curvature estimates that follows from the stable Bernstein theorem in \(\R^{n+1}\) for \(n\leq 5\). We recall that a compact manifold satisfies the weakly bounded geometry requirement.
\begin{proposition}[\cite{chodosh-li-stryker, mazet, wuyujie}  ]
    Let \((X^{n+1}, \p X)\) be a complete Riemannian manifold with weakly bounded geometry, and let \((M^n, \p M)\) be a complete stable minimal free boundary immersion whose second fundamental form is denoted as \(A\). Then, for \(n\leq 5\) there is a constant \(C(X,g)\) independent of \(M\) such that,
    \[
    \sup_{q\in M}|A(q)|\leq C(X,g)<\infty.
    \]
\end{proposition}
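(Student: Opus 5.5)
The plan is the standard blow-up argument: suppose the bound fails, rescale, and contradict the stable Bernstein theorem. Assume there are complete stable free boundary minimal immersions $M_j$ and points $q_j\in M_j$ with $|A_j(q_j)|\to\infty$. We first localize with a point-picking argument. Replace $q_j$ by a point $p_j$ that almost maximizes $d_j(\cdot,q_j)$-weighted curvature on an intrinsic ball, for instance the maximum of $|A_j|(x)\,(1-d_{M_j}(x,q_j))$ over $B^{M_j}_1(q_j)$. Write $\lambda_j=|A_j(p_j)|\to\infty$. Rescaling the ambient metric by $\lambda_j^2$ gives immersions with $|A|(p_j)=1$ and $|A|\le 2$ on intrinsic balls of radius $R_j\to\infty$ about $p_j$. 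Because $X$ has weakly bounded geometry, the rescaled ambient manifolds converge in $C^{2,\alpha}$ (after pulling back by exponential maps or harmonic coordinates in the weakly bounded geometry sense) to either $\R^{n+1}$ or a closed half-space $\R^{n+1}_+$. Which limit occurs depends on whether $\lambda_j\, d_X(F_j(p_j),\partial X)$ stays bounded.

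With uniform curvature bounds, the rescaled immersions are locally graphs of uniform size over their tangent planes. Near the boundary this uses the free boundary condition to get graphs meeting the boundary orthogonally, with uniform $C^{2,\alpha}$ estimates from elliptic theory with Neumann-type boundary conditions, as in \cite{wuyujie}. We then extract a subsequence converging smoothly and locally, as immersions, to a complete two-sided stable minimal immersion $M_\infty$ with $|A_\infty|(p_\infty)=1$. In the first case $M_\infty\subset\R^{n+1}$. In the second case $M_\infty\subset\R^{n+1}_+$ is free boundary on $\partial\R^{n+1}_+$. Stability passes to the limit: the boundary term $\sff_{\partial X}(\nu,\nu)\phi^2$ scales like $\lambda_j^{-1}$, and the curvature term like $\lambda_j^{-2}$, so both vanish. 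Two-sidedness also passes to the limit, using local two-sidedness together with the fact that stability is tested on compactly supported functions. Completeness holds because $R_j\to\infty$.

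In the interior case, the stable Bernstein theorem in $\R^{n+1}$ for $n\le5$ (\cite{chodoshliR4,Chodosh-Li-Minter-Stryker,mazet}) forces $M_\infty$ to be flat, contradicting $|A_\infty|(p_\infty)=1$. In the half-space case we reflect $M_\infty$ across $\partial\R^{n+1}_+$. Since $M_\infty$ meets the flat boundary orthogonally, the doubled surface is a $C^{1,1}$, hence by minimality smooth, complete minimal immersion in $\R^{n+1}$. A compactly supported test function on the double splits into its even and odd parts. The even part corresponds to a test function on $M_\infty$ with no boundary condition, which is exactly the free boundary stability, since the boundary term vanishes for a flat boundary. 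The odd part vanishes on the boundary and so is also admissible. The quadratic form splits orthogonally, so the double is stable, Bernstein applies, and we get a contradiction again.

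The step I expect to be the main obstacle is the compactness near the boundary. There we need uniform graphical estimates for immersed free boundary pieces near $\partial X$, plus the justification that the reflected limit is smooth and two-sided. I would handle both by working in Fermi coordinates adapted to $\partial X$ and citing the free boundary regularity and compactness theory used in \cite{wuyujie} and \cite{Zhou-Curvature-estimates-for-stable-fbms}.
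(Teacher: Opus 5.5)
Your proposal is correct and follows the same route the paper relies on. The paper gives no independent proof: it defers to the blow-up arguments of Chodosh--Li--Stryker and Wu. Those are precisely your point-picking rescaling to a complete stable limit, either in $\R^{n+1}$ or free boundary in a half-space and doubled by reflection, now contradicted by the stable Bernstein theorem for $n\leq 5$.

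You pass quickly over two half-space cases: a limit with empty boundary, and a limit with an interior point touching $\partial\R^{n+1}_+$. In the first, apply Bernstein directly to $M_\infty$. In the second, the maximum principle forces $M_\infty$ into the hyperplane, which contradicts $|A_\infty|(p_\infty)=1$.
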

The proof for manifolds without boundary when \(n=3\) was given in Chodosh-Li-Stryker, and the proof for free boundary immersion was given in \cite{wuyujie}. Both proofs can be extended to dimension \(n\leq 5\) given the resolution of stable Bernstein problem in \cite{chodosh-li-stryker}\cite{mazet}.

Using Gauss-Codazzi equation, we have \(\Ric_M\geq -|A|^2 g\geq -(n-1)K^2g\) for some \(K \geq  0\).

\begin{lemma}
    If \((M^{n},\p M)\rightarrow (\B^{n+1},\Sph^n)\) for \(n\in\{4,5\}\) is a complete non-compact two-sided stable free boundary minimal hypersurface, let \((E_k)_{k\in\NN}\) denote the only nonparabolic end of \(M\) with respect to the exhaustion \(C_k=B^M_{kL}(x)\) for some \(x\in \p M\). Then the end \((E_k)\) has almost linear volume growth. More specifically, there is a constant \(C_0\) such that
    \[
    \sup_{k\in \NN}\Vol(M_k) \leq C_0.
    \]
\end{lemma}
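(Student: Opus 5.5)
The plan is to compare the volume of $M_k$ with the volume of the bubble $\Sigma_k$, using the Ricci lower bound $\Ric_M\geq-(n-1)K^2g$ from the curvature estimate and the Gauss equation.

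\textbf{Step 1 (bubble volume in $g$).} Since $\tilde g=r^{-2}g$ with $r\leq1$, we have $d\mu_g=r^{n-1}d\tilde\mu$ on the $(n-1)$-dimensional bubble. Hence
\[
\Vol_g(\Sigma_k)\leq\Vol_{\tilde g}(\tilde\Sigma_k)\leq V_0,
\]
by Corollary \ref{cor:relative-bubble-estimates} and Corollary \ref{cor:weighted-relative-bubble-volume-dim-five}.

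\textbf{Step 2 (position of $\Sigma_k$).} The bubble $\tilde\Sigma_k$ lies within $\tilde g$-distance $L_0$ of $\partial_+Y_k$, and $\tilde g\geq g$, so $\Sigma_k$ lies within $g$-distance $L_0$ of $\partial_+Y_k$. Consequently $\Sigma_k\subset C_{k+1}\cap E_k$. Moreover, every point of $M_k$ lies within $g$-distance at most $D:=2L+L_0$ of $\Sigma_k$. To see this, take $p\in M_k$ and a minimizing geodesic from $x$ to $p$. Either this geodesic crosses $\Sigma_k$, which separates $\partial_1E_k$ from $\partial_1E_{k+1}$; or $p$ lies on the same side of $\Sigma_k$ as $\partial_1E_k$, in which case the geodesic crosses $\partial_1E_k$ at distance $kL$ from $x$. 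In the second case, connect through $Y_k$ to $\Sigma_k$ within bounded distance. The bounded-distance claim in this second case must be verified using the connectedness of $M_k$ and $\partial_\pm Y_k$ together with the separation properties. I would argue it more carefully through the crossing points: each point of $M_k$ is joined to $\Sigma_k$ along a segment of the geodesic from $x$, of length at most $2L$, or else lies in the region between $\partial_1E_k$ and $\Sigma_k$, whose diameter is controlled in the same way.

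\textbf{Step 3 (Jacobian comparison, the main step).} Consider the normal exponential map from $\Sigma_k$, or more robustly the map assigning to each $p\in M_k$ the foot point on $\Sigma_k$ of a minimizing geodesic from $p$ to $\Sigma_k$. Such a geodesic leaves $\Sigma_k$ orthogonally, and since $M$ has boundary it may touch $\partial M$. Along these geodesics, Riccati/Heintze--Karcher comparison with $\Ric\geq-(n-1)K^2$ and $|H_{\Sigma_k}|$ bounds the Jacobian by $(\cosh Kt+C\sinh Kt)^{n-1}$. Integrating over $t\leq D$ would give $\Vol(M_k)\leq 2V_0\,C(n,K,D)$.

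The main obstacle is that $H_{\Sigma_k}$ is only controlled in $\tilde g$. There $H=h-a\,d\log w(\nu)$, and $h$ is bounded near where the bubble sits, but $w$ is not controlled pointwise. There are two further difficulties: geodesics may hit $\partial M$, and $\Sigma_k$ may pass near $F^{-1}(0)$.

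The first thing I would try is to avoid mean curvature entirely by a ball-covering argument. Step 1 together with the curvature bound $|A|\leq C$ should give a covering of $\Sigma_k$ by at most $N_0$ $g$-balls of radius $1$. Here a lower volume bound for balls of $\Sigma_k$ is needed, and this would come from an ambient-ball argument: $\Sigma_k$ is a $\mu$-bubble, so it has a density lower bound. The $D$-neighborhood is then covered by $N_0$ balls of radius $D+1$, and relative Bishop--Gromov on $M$ with convex-type boundary, via the Jacobian comparison along minimizing geodesics as the introduction indicates, bounds each ball by $C(n,K,D)$. This yields $C_0=N_0\,C(n,K,D)$.
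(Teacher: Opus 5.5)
\textbf{There is a genuine gap in Steps 2--3.}

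Step 1 is fine. The trouble is that both of your routes in Step 3 need control of the bubble $\Sigma_k$ that you do not have.

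\emph{The Heintze--Karcher route} needs a bound on $H_{\Sigma_k}$. As you note yourself, this is unavailable.

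\emph{The covering route} needs a uniform lower density bound for $g$-balls in $\Sigma_k$. You assert it ("$\Sigma_k$ is a $\mu$-bubble, so it has a density lower bound"), but it is just as hard. The bubble minimizes $\int w^a-\int hw^a$, so any comparison or monotonicity argument needs bounds on $|h|$ along $\Sigma_k$ and on the oscillation of $w$. Yet $h$ is unbounded on the band, and the bubble is not quantitatively kept away from where $|h|$ is large. The bound $|A|\le C$ concerns $M\subset\B^{n+1}$ and says nothing about the intrinsic geometry of $\Sigma_k\subset M$.

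\emph{Step 2} is also unjustified: the claim that all of $M_k$ lies within distance $2L+L_0$ of $\Sigma_k$. Points of $M_k$ on the $\p_1E_k$-side of $\Sigma_k$ are within $L$ of $\p_1E_k$. But nothing in the construction forces every point of $\p_1E_k$ to be near $\Sigma_k$. Dead-end regions of $M_k$ adjacent to $\p_1E_k$ can lie entirely inside the minimizing set $\Omega$, far from where $\Sigma_k$ crosses the end. Your fallback ("each point of $M_k$ is joined to $\Sigma_k$ along a segment of the geodesic from $x$ of length at most $2L$") is only true if you replace $\Sigma_k$ by the \emph{previous} bubble $\Sigma_{k-1}$.

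\textbf{The missing idea.} You need no tubular-neighbourhood or covering estimate around the bubble at all. Work instead with the geodesic cone from the base point $x$ and use $\Sigma_{k-1}$:
\begin{enumerate}
\item Since $\Sigma_{k-1}\subset M_{k-1}$ separates $\p_1E_{k-1}$ from $\p_1E_k$, every minimizing geodesic $\gamma_\theta$ from $x$ to a point $\gamma_\theta(s)\in M_k$, with $s\in[kL,(k+1)L]$, first crosses $\Sigma_{k-1}$ at a time $t(\theta)\in[(k-1)L,kL]$. Hence $s-t(\theta)\le 2L$.
\item The paper uses that $\p M$ is convex in $M$; its second fundamental form there is the induced metric. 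So Bishop--Gromov along each ray with $\Ric_M\ge-(n-1)K^2$ gives
$J(s,\theta)\le\bigl(\operatorname{sn}_K(s)/\operatorname{sn}_K(t(\theta))\bigr)^{n-1}J(t(\theta),\theta)\le c_0\,J(t(\theta),\theta)$
for $k\ge k_0$, because the ratio tends to at most $e^{2(n-1)KL}$.
\item The first-crossing map $\theta\mapsto\gamma_\theta(t(\theta))$ is injective, with $J(t(\theta),\theta)\,d\theta=|\langle\gamma_\theta',\nu_\Sigma\rangle|\,d\mathcal H^{n-1}$. By the area formula, $\int_{\Theta_s}J(t(\theta),\theta)\,d\theta\le\Vol_g(\Sigma_{k-1})\le V_0$.
\item Integrating $\Vol(\p B_s(x))$ over $s\in[kL,(k+1)L]$ gives $\Vol(M_k)\le c_0LV_0$.
\end{enumerate}
This uses only your Step 1 volume bound and the separation property. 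It never needs $H_\Sigma$, a density estimate, or a neighbourhood claim for the bubble.
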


\begin{proof}
Consider a minimizing geodesic \(\gamma_{\theta}(t)=\exp_p(t\theta)\) starting from \(p=x\) for some \(\theta\in S^+_pM=\{v\in T_p M, |v|=1, v\cdot \nu_{\p M}<0\}\), since \(M\) is convex, minimizing geodesic starting in the directions in \(S^+ M\) remains in the interior until it hits a cut time \(c(\theta)>0\). For \(0<t<c(\theta)\), we define the volume Jacobian so that \(d\mu_M=J(t,\theta)dt d\theta\), then the Bishop-Gromov Jacobian comparison shows that \(\frac{J(t,\theta)}{(\operatorname{sn}_K(t))^{n-1}}\) is non-increasing up to \(c(\theta)\). Therefore for \(0<t_1<t_2<c(\theta)\) we have
\[
\frac{J(t_2, \theta)}{J(t_1, \theta)}\leq \left(\frac{\operatorname{sn}_{K}(t_2)}{\operatorname{sn}_{K}(t_1)}\right)^{n-1}.
\]

Now we take \(k\geq k_0\) for some \(k_0\) to be decided, and we want to bound the volume of \(M_k\) by a fixed constant. For any \(s\in [kL,(k+1)L]\), and \(q\in \p B_{s}(p)\) there is a minimizing geodesic \(\gamma_{\theta}(s)\), and we denote \(t(\theta)\) to be the first time \(\gamma_{\theta}\) pass through the separating \(\mu\)-bubble \(\Sigma_{k-1}\), then \( (k-1)L\leq t(\theta) \leq kL\), and \(0\leq s-t(\theta)\leq 2L\). When \(q=\gamma_{\theta}(s)\) is not in the cut locus of \(p\), we apply the Jacobian comparison,
\[
\frac{J(s,\theta)}{J(t(\theta),\theta)}\leq \left(\frac{\operatorname{sn}_K(s)}{\operatorname{sn}_K(t(\theta))}\right)^{n-1} \implies \limsup_{k\rightarrow \infty} \frac{J(s,\theta)}{J(t(\theta),\theta)} =\frac{c_0}{2}<\infty.
\]
Choosing a large enough \(k_0\) we have \(J(s,\theta)\leq c_0 J(t(\theta),\theta)\).

If \(q=\gamma_{\theta}(s)\) is not in the cut locus of \(p\), then \(q'=\gamma_{\theta}(t(\theta))\) is also not in the cut locus of \(p\); denote the set of such \(q'\) as \(\hat{\Sigma}\). Then there is a smooth map from \(\Pi:\hat{\Sigma}\rightarrow \Theta_s \subset S^+_p M\) defined as \(\Pi(q')=\theta\), where \(\Theta_s\) is the set of points in \(S^{+}_p M\) such that \(\gamma_{\theta}(s)\) is not in the cut locus. Then 
\begin{align*}
   \int_{\Theta_s} J(t(\theta),\theta) d\theta  \leq  \int_{\hat{\Sigma}_k} J(t({\theta }),\Pi(q')) \operatorname{Jac}\Pi  =\int_{\Sigma_k}|\nu_{\Sigma_k}\cdot \gamma_{\theta}'| \leq \Vol(\Sigma_k)
\end{align*}

We  now have by area formula,
\begin{align*}
    \Vol(M_k)&\leq \int_{s=kL}^{(k+1)L} \Vol(\p B_{s}(p)) \\
    &= \int_{s=kL}^{(k+1)L}\int_{\Theta_s}J(s,\theta) d\theta ds\\
    &\leq c_0 \int_{s=kL}^{(k+1)L}\int_{\Theta_s} J(t(\theta),\theta) d\theta ds\\
    &\leq c_0L\Vol(\Sigma_k)\leq c_0V_0 L.
\end{align*}

This shows that for all \(k\geq k_0\), we have \[
\Vol(B_{(k+1)L}(p) \setminus B_{kL}(p))\leq c_0V_0L.
\]
So the almost linear volume growth of the end \((E_k)_{k\in \NN}\) follows.
\end{proof}

We can now finish the proof of the main theorem.
\begin{proof}
    Using the decomposition of \(M\), for \(k\geq k_0\), we find \(\rho_k\) a smooth function on \(\overline{M_k}\) with \(\operatorname{Lip}(\rho_k)\leq 2\), \(\rho_k\rvert_{\p_1 E_k}=kL\) and \(\rho_k \rvert_{\p M_k \setminus \p_1 E_k}=(k+1)L\) and extend this function continuously so that \(\rho_k \rvert_{P_{k}}=kL\) is constant.
    
    Take \(\phi_i\) to be the linear bump function that is equal to \(1\) within \(C_{k_0+i}\) and vanishes outside \(C_{k_0+2i}\). Define \(\varphi_k=\phi_i\circ \rho_k\) on \(M_k\cup P_k\) for \(k_0+i\leq k\leq k_0+2i-1\),  \(\varphi_k\) is also equal to \(1\) within \(C_{k_0+i}\) and vanishes outside \(C_{k_0+2i}\).

    We now plug \(\varphi_i\) into the stability inequality to obtain,
    \begin{align*}
        &\int_M (\Ric_X(\nu_M,\nu_M)+|A|^2)\varphi_i^2 +\int_{\p M}A^{\p X}(\nu_M,\nu_M)\varphi_i^2 \\
        &\leq \int_{M} |\nabla \varphi_i|^2= \sum_{k=k_0+i}^{k_0+2i}\int_{M_k} \frac{4}{(iL)^2}\leq (\frac{4C_0}{L^2}) \frac{1}{i}
    \end{align*}
    Letting \(i\rightarrow \infty\), the right-hand side of above inequality goes to zero, while the left hand side is bounded below by \(\int_{\p M}\varphi_i^2 \stackrel{i\rightarrow\infty}{\longrightarrow} \Vol(\p M)\), this is a contradiction.

\end{proof}

\section{Appendix: inradius bound}\label{appendix}
In this appendix we obtain an inradius bound for stable free boundary minimal hypersurfaces in a Riemannian manifold.
\begin{definition}
Let $k>0$.  The $k$-bi-Ricci curvature of a Riemannian manifold $X$ is defined
on ordered orthonormal pairs $v,w$ by
\[
        \BiRic_X^k(v,w)
        :=
        \Ric_X(v,v)+k\Ric_X(w,w)-R_X(v,w,w,v).
\]
We say that $X$ has nonnegative $k$-bi-Ricci curvature if
$\BiRic_X^k(v,w)\geq0$ for every ordered orthonormal pair $v,w$.
\end{definition}

\begin{lemma}\label{lem:weighted-inradius}
Let $3\leq n\leq5$, and let
\[
        \frac{n-1}{n}\leq k<\frac{4}{n-1}.
\]
Let $(X^{n+1},\partial X)$ have nonnegative $k$-bi-Ricci curvature.  Assume also
that
$
        h_{\partial X}\geq\kappa_X g_{\partial X}
$
for some $\kappa_X>0$.  Let
$
        F:(M^n,\partial M)\longrightarrow (X,\partial X)
$
be a complete noncompact two-sided stable free-boundary minimal hypersurface.
Then one has
\[
        d_M(p,\partial M)
        \leq
        \frac{c_{n,k}}{(n-1+k)\kappa_X}
        \qquad\text{for every }p\in M,
\]
where
\[
        c_{n,k}
        =
        n-1+\frac{(n-3)^2k}{4-(n-1)k}.
\]
In particular, when $n=5$, taking $k=4/5$ gives
        $d_M(p,\partial M)\leq 5/(3\kappa_X)$.
\end{lemma}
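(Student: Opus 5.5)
Fix $p\in M$ with $D:=d_M(p,\partial M)$ and argue by contradiction, assuming $D>c_{n,k}/((n-1+k)\kappa_X)$. The plan is to run a warped (weighted) $\mu$-bubble argument inside the intrinsic ball $B^M_D(p)$, which lies in $\Int M$, with the weight supplied by the stability inequality. This is the same template as Shen--Ye and Chodosh--Li--Minter--Stryker, except that the positivity now comes from the boundary term $\int_{\partial M}h_{\partial X}(\nu,\nu)\varphi^2\geq\kappa_X\int_{\partial M}\varphi^2$ rather than from interior curvature.

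The natural first step is to combine stability with a distance-type function. Let $\rho=d_M(\cdot,\partial M)$, smoothed. Since stability gives $-\Delta u\ge (|A|^2+\Ric_X(\nu,\nu))u$ with Robin condition $\partial_\eta u\ge \kappa_X u$ on $\partial M$, take the first eigenfunction $u>0$ of the stability operator on an exhausting domain, with the Robin boundary condition. I would then prefer to phrase the argument as a $\mu$-bubble in $M$ near the boundary, i.e. on the band $\{0\le \rho\le D\}$. The functional is $\int_{\partial^*\Omega}u^{k}-\int (\chi_\Omega-\chi_{\Omega_0})hu^{k}$, where the prescribed function $h$ is chosen as a function of $\rho$ that blows up at $\rho= D$. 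At $\rho=0$ the contribution $\partial_\eta \log u\ge\kappa_X$ acts as a barrier, playing the role that $h\to+\infty$ plays at the other face.

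Next comes the curvature algebra. As in Proposition \ref{prop:weighted-biric-dim-five}, one uses the second variation of the warped bubble $\Sigma^{n-1}$ together with the Gauss equation for $M\subset X$ and for $\Sigma\subset M$. Nonnegativity of $\BiRic^k_X$ should absorb the ambient curvature terms $\Ric_X(\nu_M,\nu_M)$ and $\Ric_M(\nu_\Sigma,\nu_\Sigma)$ weighted by $1$ and $k$. The hypothesis $\frac{n-1}{n}\le k$ is what makes the $|A_M|^2$ terms nonnegative after completing squares, via the trace-free estimate \eqref{eq:tracefree-row-estimate}. The hypothesis $k<\frac4{n-1}$ is what makes the $|\nabla^\Sigma u|$ cross terms absorbable, with the leftover constant $\frac{(n-3)^2k}{4-(n-1)k}$ arising from Young's inequality exactly as in the computation giving \eqref{eq:unified-common-second-variation}. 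This should produce an ODE condition for $h$ of the form $h'\ge \frac{n-1+k}{c_{n,k}}h^2+\dots$. The comparison solution then has blow-up length $c_{n,k}/((n-1+k)\kappa_X)$ when its initial value is $h(0)=\kappa_X$-scaled through the Robin term. Hence no bubble can exist, yet one must exist in a band longer than that length, which is the contradiction.

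The main obstacle is this choice of $h$ and the boundary bookkeeping. Since $h$ is Riccati-type with only one blow-up face at $\rho= D$, the other side must be handled by the Robin condition. One must check that the boundary contribution of the first variation has the right sign and exactly matches $(n-1+k)\kappa_X/c_{n,k}$. I would first try the simplest ansatz $h=\frac{c}{D-\rho}$-type with $\kappa_X$ entering only through $u$, and verify the constants. The case $n=5$, $k=4/5$ is then a direct substitution: $c_{5,4/5}=4+\frac{4\cdot 4/5}{4-16/5}=8$ and $n-1+k=24/5$, giving $5/(3\kappa_X)$.
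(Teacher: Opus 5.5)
Your route (a warped $\mu$-bubble in $M$) differs from the paper's. As written it has a genuine gap: the variational problem lives on a noncompact region, and nothing in your plan makes it compact.

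The ball $B^M_D(p)\subset\Int M$ you start from never meets $\partial M$, so it carries no positivity at all. The band $\{0\leq\rho\leq D\}$ you switch to is the right place. However, $\partial M$ need not be compact (for $X=\B^{n+1}$ it is noncompact by Theorem \ref{OneEnd}), and then the band is not compact either. Minimizing sequences for $\int_{\partial^*\Omega}u^k-\int(\chi_\Omega-\chi_{\Omega_0})hu^k$ can then escape along $\partial M$. Even a local minimizer $\Sigma$ obtained by exhaustion would be noncompact, so you cannot insert $\psi\equiv1$ into its second variation. The resulting inequality $\frac{4}{4-k}\int_\Sigma|\nabla^\Sigma\psi|^2\geq\epsilon\int_\Sigma\psi^2$ on a noncompact $\Sigma$ is no contradiction without volume-growth control on $\Sigma$, which you do not have.

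Truncating to a compact band does not rescue it. The interior only contributes $\Ric_M(\nu_\Sigma,\nu_\Sigma)+kS\geq0$, with no positive constant, so admissible $h$ satisfy $|\nabla h|\leq c\,h^2$, i.e.\ $1/h$ is $c$-Lipschitz. Hence $h$ cannot go from $-\infty$ to $+\infty$ in finite length. If instead $h\to+\infty$ on a lateral face, then $1/h\to0$ at the corner where that face meets $\partial M$. That contradicts the barrier requirement $h<(n-1+k)\kappa_X$ on $\partial M$.

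The paper avoids all of this by running the computation along one compact curve:
\begin{enumerate}
\item Take $u>0$ solving $-\Delta u=Su$ with $\partial_\eta u=h_{\partial X}(\nu,\nu)u$.
\item Take a $\hat g$-minimizing segment from $p$ to $\partial M$ for $\hat g=u^{2k}g$.
\item Apply the second variation of $\hat g$-length (fixed at $p$, free on $\partial M$) with the $n-1$ fields $\phi e_i$.
\item Substitute $\phi=fu^{k/2}$ and absorb the cross term $(3-n)kff'w'$ by Cauchy's inequality.
\item Test with $f(s)=s$ to get $c_{n,k}\ell\geq(n-1+k)\kappa_X\ell^2$.
\end{enumerate}

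Your constants would in fact come out right on a closed bubble, but not by the mechanism you describe. Testing with $\psi\equiv1$, the Riccati coefficient is exactly $1/c_{n,k}$. It comes from minimizing $\frac{(h-kx)^2}{n-1}+k(1-k)x^2+khx$ over the normal derivative $x=\partial_\nu\log u$, using $H=h-kx$ and $|A_\Sigma|^2\geq H^2/(n-1)$. This is where $k<\frac4{n-1}$ and $(n-3)^2$ enter; the tangential cross terms only need $k<4$.

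The quantity $(n-1+k)\kappa_X$ is not part of the ODE. It is the barrier value, namely the weighted mean curvature $H_{\partial M}+k\,\partial_\eta\log u$ of the face $\partial M$. The profile $h=c_{n,k}/(D-\rho)$ meets it exactly when $D=c_{n,k}/((n-1+k)\kappa_X)$.

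So the bookkeeping is fine. What is missing is a compact object on which to run the argument, and that is precisely what the geodesic supplies.
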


\begin{proof}
It is adaption of arguments from \cite{chengxufiniteindex,dengqintao}. For simplicity, we denote
$
        S=|A|^2+\Ric_X(\nu,\nu).
$
The free-boundary stability inequality is
\[
        \int_M |\nabla\varphi|^2\,d\mu
        \geq
        \int_M S\varphi^2\,d\mu
        +\int_{\partial M}h_{\partial X}(\nu,\nu)\varphi^2\,d\sigma .
\]
Since $M$ is noncompact,  we know there exists a positive function $u$ satisfying
\begin{equation}\label{eq:weighted-robin-solution}
        -\Delta u-Su=0
        \quad\text{in }M,
        \qquad
        \partial_\eta u=h_{\partial X}(\nu,\nu)u
        \quad\text{on }\partial M.
\end{equation}
Denote $w=\log u$ and consider
\[
        \hat g=u^{2k}g .
\]
Let $\hat\gamma$ be a $\hat g$-minimizing segment from $p$ to $\partial M$.
Parametrize this curve by the original arclength $s\in[0,\ell]$, so that
$\gamma(0)=p$ and $\gamma(\ell)\in\partial M$.  By the first variation, the
endpoint is orthogonal to $\partial M$, hence $\gamma'(\ell)=\eta$.  Choose a
$g$-orthonormal frame $e_1,\ldots,e_n$ along $\gamma$, with $e_1=\gamma'$ and
$e_2,\ldots,e_n$ tangent to $\partial M$ at $s=\ell$.  Then
$\hat e_i=u^{-k}e_i$ is $\hat g$-orthonormal.

The second variation of $\hat g$-length for variations fixed at $p$ and free on
$\partial M$ gives, for every smooth $\phi$ with $\phi(0)=0$,
\begin{equation}\label{eq:weighted-hat-second-variation}
        (n-1)\int_0^{\hat\ell}
        \left(\frac{d\phi}{d\hat s}\right)^2\,d\hat s
        \geq
        \int_0^{\hat\ell}
        \widehat{\operatorname{Ric}}(\hat e_1,\hat e_1)\phi^2\,d\hat s
        +\hat H_{\partial M}\phi(\hat\ell)^2
\end{equation}
Since $d\hat s=u^kds$, the left-hand
side is
\[
        (n-1)\int_0^\ell u^{-k}(\phi')^2\,ds .
\]
The usual conformal Ricci formula for $\hat g=e^{2kw}g$ is
\begin{align*}
        \widehat{\operatorname{Ric}}(\hat e_1,\hat e_1)
        &=
        u^{-2k}\bigl(
        \operatorname{Ric}^M(e_1,e_1)
        -(n-2)k\nabla^2w(e_1,e_1)-k\Delta w        \\
        &\qquad
        +(n-2)k^2(w')^2-(n-2)k^2|\nabla w|^2
        \bigr).
\end{align*}
Along a $\hat g$-geodesic, the quadratic terms cancel with the difference
between $\nabla^2w(e_1,e_1)$ and $w''$.  Indeed,
\[
        \nabla_{e_1}e_1=k(\nabla w-w'e_1),
        \qquad
        \nabla^2w(e_1,e_1)=w''-k|\nabla w|^2+k(w')^2 .
\]
Thus the conformal Ricci formula along $\gamma$ gives
\[
        \widehat{\operatorname{Ric}}(\hat e_1,\hat e_1)
        =
        u^{-2k}\left(
        \operatorname{Ric}^M(e_1,e_1)
        -(n-2)kw''
        -k\Delta w
        \right),
\]
and the conformal change of boundary mean curvature gives
\[
        \hat H_{\partial M}
        =
        u^{-k}\left(H_{\partial M}+(n-1)k\partial_\eta w\right).
\]
Using \eqref{eq:weighted-robin-solution}, namely
\[
        \Delta w+|\nabla w|^2=-S,
        \qquad
        \partial_\eta w=h_{\partial X}(\nu,\nu),
\]
the second variation becomes
\begin{align}
        (n-1)\int_0^\ell u^{-k}(\phi')^2\,ds
        &\geq
        \int_0^\ell
        \left(\operatorname{Ric}^M(e_1,e_1)+kS
        -(n-2)kw''+k|\nabla w|^2\right)\phi^2u^{-k}\,ds
        \notag\\
        &\quad
        +\left(H_{\partial M}
        +(n-1)kh_{\partial X}(\nu,\nu)\right)u^{-k}\phi(\ell)^2 .
        \label{eq:weighted-second-variation}
\end{align}
Now Denote $\phi=fu^{k/2}$, where $f(0)=0$.  Then
\[
        u^{-k}(\phi')^2
        =
        (f')^2+kff'w'+\frac{k^2}{4}f^2(w')^2
\]
where $w'=(\log u)'$, and
\[
        \int_0^\ell f^2w''\,ds
        =
        f(\ell)^2h_{\partial X}(\nu,\nu)
        -2\int_0^\ell ff'w'\,ds .
\]
Substituting into \eqref{eq:weighted-second-variation} gives
\begin{align}
        &(n-1)\int_0^\ell (f')^2\,ds
        +\frac{(n-1)k^2}{4}\int_0^\ell f^2(w')^2\,ds
        +(3-n)k\int_0^\ell ff'w'\,ds        \notag\\
        &\qquad\geq
        \int_0^\ell f^2
        \left(\operatorname{Ric}^M(e_1,e_1)+kS
        +k|\nabla w|^2\right)\,ds
        +\left(H_{\partial M}
        +kh_{\partial X}(\nu,\nu)\right)f(\ell)^2 .
        \label{eq:weighted-expanded}
\end{align}
Since $|\nabla w|^2\geq(w')^2$, we obtain
\begin{align}
        &(n-1)\int_0^\ell (f')^2\,ds
        -\frac{k(4-(n-1)k)}4\int_0^\ell f^2(w')^2\,ds
        +(3-n)k\int_0^\ell ff'w'\,ds        \notag\\
        &\qquad\geq
        \int_0^\ell f^2
        \left(\operatorname{Ric}^M(e_1,e_1)+kS\right)\,ds
        +\left(H_{\partial M}
        +kh_{\partial X}(\nu,\nu)\right)f(\ell)^2 .
        \label{eq:weighted-pre-cauchy}
\end{align}
By Cauchy's inequality,
\[
        -\frac{k(4-(n-1)k)}4a^2+(3-n)kab
        \leq
        \frac{(n-3)^2k}{4-(n-1)k}b^2 .
\]
Therefore
\begin{equation}\label{eq:weighted-boundary-length}
        c_{n,k}\int_0^\ell (f')^2\,ds
        \geq
        \int_0^\ell f^2
        \left(\operatorname{Ric}^M(e_1,e_1)+kS\right)\,ds
        +\left(H_{\partial M}
        +kh_{\partial X}(\nu,\nu)\right)f(\ell)^2 .
\end{equation}
The interior term is nonnegative by standard estimate.  Indeed, for any unit $v=e_1\in T_pM$,
\[
        \operatorname{Ric}^M(v,v)+kS
        =
        \BiRic_X^k(v,\nu)
        +k|A|^2-|A(v,\cdot)|^2 .
\]
Since $A$ is trace-free,
\[
        |A(v,\cdot)|^2\leq\frac{n-1}{n}|A|^2,
\]
and hence the last expression is nonnegative by the assumptions
$\BiRic_X^k\geq0$ and $k\geq(n-1)/n$.

At the endpoint, the free-boundary condition gives
\[
        H_{\partial M}+kh_{\partial X}(\nu,\nu)
        =
        \sum_{i=2}^n h_{\partial X}(e_i,e_i)
        +kh_{\partial X}(\nu,\nu)
        \geq (n-1+k)\kappa_X .
\]
Taking $f(s)=s$ in \eqref{eq:weighted-boundary-length} gives
\[
        c_{n,k}\ell
        \geq
        (n-1+k)\kappa_X\ell^2,
\]
and hence $\ell\leq c_{n,k}/((n-1+k)\kappa_X)$.  The original distance from
$p$ to $\partial M$ is bounded by the original length $\ell$ of this curve.
\end{proof}

The following two consequences are of particular interest.
\begin{corollary}\label{cor:mean-convex-inradius}
Let $3\leq n\leq4$.  Let $(X^{n+1},\partial X)$ have nonnegative bi-Ricci
curvature and assume that
$
        H_{\partial X}\geq H_0
$
for some $H_0>0$, where $H_{\partial X}$ is the trace mean curvature.  Let
$
        F:(M^n,\partial M)\longrightarrow (X,\partial X)
$
be a complete noncompact two-sided stable free-boundary minimal hypersurface.
Then
\[
        d_M(p,\partial M)
        \leq
        \frac{c_n}{H_0}
        \qquad\text{for every }p\in M,
\]
where
\[
        c_n=n-1+\frac{(n-3)^2}{5-n}.
\]
In particular, $M$ has no interior ends.
\end{corollary}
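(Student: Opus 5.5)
The plan is to rerun the proof of Lemma \ref{lem:weighted-inradius} with the choice $k=1$. The only change is at the endpoint term: there I will use the full trace $H_{\partial X}$ instead of the pointwise bound $h_{\partial X}\geq\kappa_Xg_{\partial X}$.

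\emph{Admissibility of $k=1$.} The condition $\frac{n-1}{n}\leq k<\frac4{n-1}$ holds at $k=1$ exactly when $n<5$, which is why the statement is restricted to $3\leq n\leq 4$. For $k=1$ the $k$-bi-Ricci curvature is the usual bi-Ricci curvature, so the hypothesis of the corollary is $\BiRic^1_X\geq0$.

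\emph{Everything before the endpoint is unchanged.} The following steps of the lemma use only the stability inequality and $\BiRic_X^k\geq0$, and never use the pointwise convexity of $\partial X$:
\begin{itemize}
\item existence of the positive solution $u$ of \eqref{eq:weighted-robin-solution}, using noncompactness of $M$;
\item the conformal metric $\hat g=u^{2}g$ and the $\hat g$-minimizing segment from $p$ to $\partial M$, which meets $\partial M$ orthogonally;
\item the second variation \eqref{eq:weighted-hat-second-variation}, the substitution $\phi=fu^{1/2}$, and the Cauchy step leading to \eqref{eq:weighted-boundary-length};
\item nonnegativity of the interior term $\Ric^M(e_1,e_1)+S$, from the trace-free bound $|A(v,\cdot)|^2\leq\frac{n-1}{n}|A|^2$.
\end{itemize}
Hence \eqref{eq:weighted-boundary-length} holds with $k=1$ and constant
\[
c_{n,1}=n-1+\frac{(n-3)^2}{4-(n-1)}=n-1+\frac{(n-3)^2}{5-n}=c_n .
\]

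\emph{The endpoint term.} At $s=\ell$ we have $\gamma'(\ell)=\eta$, and $\eta$ is a unit normal to $\partial M$ inside $M$. By the free-boundary condition, the frame $e_2,\dots,e_n,\nu$ is then an orthonormal basis of $T\partial X$. Since $k=1$,
\[
H_{\partial M}+h_{\partial X}(\nu,\nu)=\sum_{i=2}^nh_{\partial X}(e_i,e_i)+h_{\partial X}(\nu,\nu)=H_{\partial X}\geq H_0 .
\]
This is the one step that needs care, and it is exactly why the trace mean curvature suffices when $k=1$. Taking $f(s)=s$ in \eqref{eq:weighted-boundary-length} gives $c_n\ell\geq H_0\ell^2$. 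So $\ell\leq c_n/H_0$, and $d_M(p,\partial M)\leq\ell$.

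\emph{No interior ends.} Suppose $(E_j)$ were an interior end. Then for some $j$ the set $E_j$ is unbounded and $E_j\cap\partial M=\varnothing$. Take $p\in E_j$. Any curve from $p$ to $\partial M$ must leave $E_j$, and can only do so through the compact inner boundary $\partial_1E_j$. Therefore
\[
d_M(p,\partial M)\geq d_M(p,\partial_1E_j).
\]
Because $E_j$ is unbounded, the right-hand side is unbounded as $p$ ranges over $E_j$. This contradicts the uniform bound $d_M(p,\partial M)\leq c_n/H_0$.
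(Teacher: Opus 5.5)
Your proposal is correct and is exactly the paper's argument. You set $k=1$ in the proof of Lemma~\ref{lem:weighted-inradius}, check that $k=1$ is admissible precisely for $n\le 4$ with $c_{n,1}=c_n$, and use the free-boundary condition at the endpoint to turn $H_{\partial M}+h_{\partial X}(\nu,\nu)$ into the full trace $H_{\partial X}\geq H_0$. You also spell out the ``no interior ends'' consequence, which the paper leaves implicit: points of an unbounded $E_j$ with $E_j\cap\partial M=\varnothing$ must be arbitrarily far from $\partial M$, contradicting the uniform bound.
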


\begin{proof}
Take $k=1$ in the proof of Lemma \ref{lem:weighted-inradius}.  Then
$\BiRic_X^1=\BiRic_X$.  In the last step of the proof of previous theorem, we now have
\[
        H_{\partial M}+h_{\partial X}(\nu,\nu)
        =
        H_{\partial X},
\]
so the pointwise lower bound on the principal curvatures of $\partial X$ can be
replaced by the trace lower bound $H_{\partial X}\geq H_0$.
\end{proof}

\begin{corollary}
\label{cor:strict-inradius-unit-ball}
Let
$
        F:(M^4,\partial M)\longrightarrow
        (\overline{\B^5},\Sph^4)
$
be a complete noncompact two-sided stable free-boundary minimal immersion.
Then
\begin{equation}\label{eq:strict-inradius-unit-ball}
        d_M(p,\partial M)\leq\frac{32}{35}<1
        \qquad\text{for every }p\in M.
\end{equation}
In particular, $F(M)$ does not pass through the origin, and $M$ has no interior
ends.
\end{corollary}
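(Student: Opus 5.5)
The plan is to specialize Lemma \ref{lem:weighted-inradius} to $n=4$, $X=\overline{\B^5}$, and then choose the free parameter $k$ to push the constant below $1$. For the Euclidean ball the ambient curvature vanishes identically, so $\BiRic_X^k\equiv0$ for every $k$ and the curvature hypothesis holds trivially. The boundary $\Sph^4$ has second fundamental form $h_{\partial X}=g_{\partial X}$, so we may take $\kappa_X=1$.

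With $n=4$ the admissible range in Lemma \ref{lem:weighted-inradius} is $\tfrac34\leq k<\tfrac43$, and the resulting bound is
\[
d_M(p,\partial M)\leq f(k):=\frac{3+\frac{k}{4-3k}}{3+k}.
\]
The key choice is $k$. At $k=1$ we get $f(1)=4/4=1$, which is not strict and would not exclude the origin. So I would take the endpoint $k=\tfrac34$. There $4-3k=\tfrac74$, hence $c_{4,3/4}=3+\tfrac37=\tfrac{24}{7}$, and $n-1+k=\tfrac{15}{4}$. This gives
\[
f(3/4)=\frac{24}{7}\cdot\frac{4}{15}=\frac{32}{35}<1,
\]
which is \eqref{eq:strict-inradius-unit-ball}. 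The only point to check is that the endpoint $k=(n-1)/n$ is genuinely allowed. It is: the lemma's hypothesis is $k\geq (n-1)/n$, and the proof only uses this through $k|A|^2-|A(v,\cdot)|^2\geq0$, which still holds with equality allowed.

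For the consequences, suppose first that $F(p)=0$ for some $p\in M$. Any curve in $M$ from $p$ to $\partial M$ maps under $F$ to a curve from the origin to $\Sph^4$. Its $g$-length equals the Euclidean length of the image, which is at least $1$. Hence $d_M(p,\partial M)\geq1>\tfrac{32}{35}$, a contradiction, so $F(M)$ misses the origin.

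Now suppose $M$ had an interior end. Then some representative $E_j$ is unbounded and satisfies $E_j\cap\partial M=\varnothing$. Any path from a point $q\in E_j$ to $\partial M$ must exit $E_j$ through the compact inner boundary $\partial_1E_j$, so $d_M(q,\partial M)\geq d_M(q,\partial_1E_j)$. Since $E_j$ is unbounded, the right-hand side is unbounded over $q\in E_j$, contradicting the uniform inradius bound. I expect no real obstacle beyond confirming the validity of the endpoint choice $k=3/4$.
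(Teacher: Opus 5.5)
Your proposal is correct and follows the paper's proof essentially verbatim. You apply Lemma \ref{lem:weighted-inradius} with $n=4$, $k=3/4$, $\kappa_X=1$ to get $c_{4,3/4}/(15/4)=32/35<1$, and then exclude the origin because any curve from a preimage of $0$ to $\partial M$ has induced length at least $1$; your short argument for the absence of interior ends (points far out in an end disjoint from $\partial M$ would be arbitrarily far from $\partial M$) supplies a step the paper leaves implicit, and it is fine.
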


\begin{proof}
The Euclidean ball has $\BiRic^{3/4}=0$, and its boundary satisfies
$h_{\Sph^4}=g_{\Sph^4}$.  Applying Lemma
\ref{lem:weighted-inradius} with $n=4$, $k=3/4$, and $\kappa_X=1$ gives
\[
 d_M(p,\partial M)
 \leq
 \frac{3+\dfrac{3/4}{4-3(3/4)}}{3+3/4}
 =\frac{32}{35}<1.
\]
If $F(p)=0$, every curve from $p$ to $\partial M$ is mapped to a curve in
$\overline{\B^5}$ joining the origin to $\Sph^4$.  Since its Euclidean length,
and hence its induced $g$-length, is at least $1$, this would imply
$d_M(p,\partial M)\geq1$, a contradiction.
\end{proof}

\bibliographystyle{alpha}
\bibliography{reference}
\end{document}

%% file: head.tex
\usepackage{amsmath}
\usepackage{amssymb}
\usepackage{amsthm}
\usepackage{hyperref}
\usepackage[noabbrev,capitalize]{cleveref}
\usepackage{mathrsfs}
\usepackage{mathtools}
\usepackage{slashed}
\usepackage{tikz-cd}
\usepackage[shortlabels]{enumitem}

\setenumerate{label=(\roman*)}

\DeclareMathOperator{\Vol}{Vol}

\DeclareMathOperator{\Ric}{Ric}
\DeclareMathOperator{\biRic}{biRic}

\newcommand{\Int}[1]{\mathring{#1}}

\newcommand{\cH}{\widetilde{H}}

\def\sideremark#1{\ifvmode\leavevmode\fi\vadjust{\vbox to0pt{\vss
 \hbox to 0pt{\hskip\hsize\hskip1em
 \vbox{\hsize3cm\tiny\raggedright\pretolerance10000
 \noindent #1\hfill}\hss}\vbox to8pt{\vfil}\vss}}}

\newtheorem{theorem}{Theorem}[section]
\newtheorem{proposition}[theorem]{Proposition}
\newtheorem{lemma}[theorem]{Lemma}
\newtheorem{corollary}[theorem]{Corollary}

\theoremstyle{definition}
\newtheorem{definition}[theorem]{Definition}

\theoremstyle{remark}

\numberwithin{equation}{section}